\newtheorem{theorem}{Theorem}[section]
\newtheorem{corollary}[theorem]{Corollary}
\newtheorem{lemma}[theorem]{Lemma}
\newtheorem{proposition}[theorem]{Proposition}
\newtheorem{conjecture}[theorem]{Conjecture}
\theoremstyle{remark}
\newtheorem{example}[theorem]{Example}
\newtheorem{definition}[theorem]{Definition}
\newtheorem{remark}[theorem]{Remark}
\newcommand{\CC}{\mathbb{C}}
\newcommand{\QQ}{\mathbb{Q}}
\newcommand{\ZZ}{\mathbb{Z}}
\newcommand{\ukappa}{\underline{\kappa}}
\newcommand{\ulambda}{\underline{\lambda}}
\newcommand{\umu}{\underline{\mu}}
\newcommand{\unu}{\underline{\nu}}
\newcommand{\urho}{\underline{\rho}}
\newcommand{\ualpha}{\underline{\alpha}}
\newcommand{\ubeta}{\underline{\beta}}
\newcommand{\ugamma}{\underline{\gamma}}
\newcommand{\ua}{\underline{a}}
\newcommand{\ub}{\underline{b}}
\newcommand{\Fdot}{F_{\bullet}}
\newcommand{\Edot}{E_{\bullet}}
\newcommand{\Gdot}{G_{\bullet}}
\newcommand{\Kdot}{K_{\bullet}}
\newcommand{\Ldot}{L_{\bullet}}
\newcommand{\Tdot}{T_{\bullet}}
\newcommand{\calFdot}{\mathcal{F}_{\bullet}}
\newcommand{\calEdot}{\mathcal{E}_{\bullet}}
\newcommand{\Gr}{\mbox{\it Gr\,}}
\newcommand{\Fl}{{\mathbb F}\ell}
\newcommand{\Gal}{\mbox{\rm Gal}}
\newcommand{\I}{\raisebox{-1pt}{\,\includegraphics{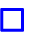}\,}}
\newcommand{\Ib}{\raisebox{-1pt}{\,\includegraphics{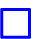}\,}}
\newcommand{\II}{\raisebox{-1pt}{\,\includegraphics{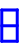}\,}}
\newcommand{\T}{\raisebox{-1pt}{\,\includegraphics{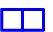}\,}}
\newcommand{\ThT}{\raisebox{-1pt}{\,\includegraphics{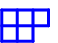}\,}}
\newcommand{\sI}{\raisebox{-1pt}{\includegraphics{pictures/1}}}
\newcommand{\sIII}{\raisebox{-1pt}{\includegraphics{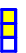}}}
\newcommand{\sncIII}{\raisebox{-1pt}{\includegraphics{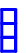}}}
\newcommand{\sTI}{\raisebox{-1pt}{\includegraphics{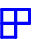}}}
\newcommand{\sTh}{\raisebox{-1pt}{\includegraphics{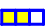}}}
\newcommand{\sncTh}{\raisebox{-1pt}{\includegraphics{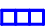}}}
\newcommand{\sIIII}{\raisebox{-1pt}{\includegraphics{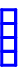}}}
\newcommand{\sTII}{\raisebox{-1pt}{\includegraphics{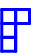}}}
\newcommand{\sTT}{\raisebox{-1pt}{\includegraphics{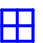}}}
\newcommand{\sThI}{\raisebox{-1pt}{\includegraphics{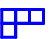}}}
\newcommand{\sF}{\raisebox{-1pt}{\includegraphics{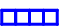}}}
\newcommand{\sFT}{\raisebox{-1pt}{\includegraphics{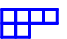}}}
\newcommand{\sThTh}{\raisebox{-1pt}{\includegraphics{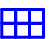}}}
\newcommand{\sTTII}{\raisebox{-1pt}{\includegraphics{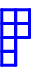}}}
\newcommand{\sTTT}{\raisebox{-1pt}{\includegraphics{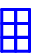}}}
\newcommand{\sFFTT}{\raisebox{-1pt}{\includegraphics{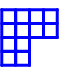}}}
\newcommand{\sFF}{\raisebox{-1pt}{\includegraphics{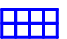}}}
\newcommand{\sFThI}{\raisebox{-1pt}{\includegraphics{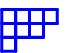}}}
\newcommand{\sFTT}{\raisebox{-1pt}{\includegraphics{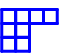}}}
\newcommand{\sThThII}{\raisebox{-1pt}{\includegraphics{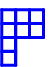}}}
\newcommand{\sThThT}{\raisebox{-1pt}{\includegraphics{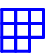}}}
\newcommand{\sFTII}{\raisebox{-1pt}{\includegraphics{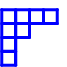}}}
\newcommand{\sTTTT}{\raisebox{-1pt}{\includegraphics{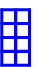}}}
\newcommand{\sThTTI}{\raisebox{-1pt}{\includegraphics{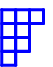}}}
\newcommand{\defcolor}[1]{{\color{blue}#1}}
\newcommand{\demph}[1]{\defcolor{{\sl #1}}}
\title{Galois Groups of Composed Schubert Problems} 
\author[F.~Sottile]{Frank Sottile}     
\address{Frank Sottile\\     
         Department of Mathematics\\     
         Texas A\&M University\\     
         College Station\\     
         Texas \ 77843\\     
         USA}     
\email{sottile@math.tamu.edu}     
\urladdr{http://www.math.tamu.edu/\~{}sottile}
\author[R.~Williams]{Robert Williams}
\address{Robert Williams\\     
         Department of Mathematics\\     
         Sam Houston State University\\     
         Huntsville\\     
         Texas \ \\     
         USA}     
\email{rlw036@shsu.edu}     
\author[L.~Ying]{Li Ying}     
\address{Li Ying\\     
         Department of Mathematics\\     
         Texas A\&M University\\     
         College Station\\     
         Texas \ 77843\\     
         USA}     
\email{98yingli@gmail.com}     
\urladdr{http://www.math.tamu.edu/\~{}98yingli/}
\subjclass[2010]{05E05,14N15}
\keywords{Schubert calculus, Galois group}
\begin{document}
\begin{abstract}
 Two Schubert problems on possibly different Grassmannians may be composed to obtain a Schubert problem on a larger
 Grassmannian whose number of solutions is the product of the numbers of the original problems.
 This generalizes a construction discovered while classifying Schubert problems on the Grassmannian of $4$-planes in
 $\CC^9$ with imprimitive Galois groups.
 We give an  algebraic proof of the product formula.
 In a number of cases, we show that the Galois group of the composed Schubert problem is a subgroup of a wreath product of
 the Galois groups of the original problems, and is therefore imprimitive.
 We also present evidence for a conjecture that all composed Schubert problems have imprimitive Galois groups.
\end{abstract}

\maketitle

\begin{center}
 Dedicated to William Fulton on the occasion of his 80th birthday.
\end{center}

\section*{Introduction}
In his 1870 treatise ``Trait\'e des Substitutions et des \'equations alg\'ebrique'', C.\ Jordan~\cite{J1870} observed that
problems in enumerative geometry have Galois groups (of associated field extensions) which encode internal structure of
the enumerative problem.
Interest in such enumerative Galois groups was revived by Joe Harris through his seminal paper~\cite{Ha79} that
revisited and extended some of Jordan's work and gave a modern proof that the Galois group equals a monodromy group.
The Galois group of an enumerative problem is a subgroup of the symmetric group acting on its solutions.
Progress in understanding enumerative Galois groups was slow until the 2000's when new methods were introduced by Vakil and
others, some of which were particularly effective for Schubert problems on Grassmannians.

Vakil's geometric Littlewood-Richardson rule~\cite{Va06a} gave an algorithmic criterion that implies a Galois group of a
Schubert problem (a \demph{Schubert Galois group}) contains the alternating group (is \demph{at least alternating}).
This revealed some Schubert problems whose Galois groups might not be the full symmetric group, as well as many that were
at least alternating~\cite{Va06b}.
A numerical computation~\cite{LS09} of monodromy groups of some  Schubert problems found that all had full
symmetric Galois groups, providing evidence that the typical Schubert Galois group is full symmetric.
Vakil's experimentation suggested that Schubert Galois groups for Grassmannians of two- and three-dimensional
linear subspaces should always be full symmetric, but that this no longer holds for four-planes in $\CC^8$.

In fact (with help from Derksen), Vakil identified a Schubert problem with six solutions whose Galois group is the
symmetric group $S_4$, and the action is that of $S_4$ on subsets of $\{1,2,3,4\}$ of cardinality two.
It was later shown that all Schubert Galois groups for Grassmannians of 2-planes are at least alternating~\cite{BdCS} and
those for Grassmannians of 3-planes are doubly transitive~\cite{SW_double}, which supports 
Vakil's observations.
All Schubert problems on the next two Grassmannians of 4-planes whose Galois groups are not at least alternating were
classified, those in $\CC^8$ in~\cite{MSJ} and in $\CC^9$ in~\cite{GIVIX}.
Of over 35,000 nontrivial Schubert problems on these Grassmannians, only 163 had imprimitive Galois groups.
Besides the example of Derksen/Vakil, this study found two distinct types of constructions giving Schubert problems with 
imprimitive Galois groups.

Our goal is to better understand one of these types, called Type I in~\cite{GIVIX}.
We generalize the construction in~\cite{GIVIX} and call it \demph{composition of Schubert problems}
(Definition~\ref{Def:composition}). 
We prove a product formula (Theorem~\ref{Th:product}) for the number of solutions to a composition of Schubert problems,
which includes a combinatorial bijection involving sets of generalized Littlewood-Richardson tableaux
(explained in Remark~\ref{R:bijection}).
In many cases, we show that the Galois group of a composition of Schubert problems is imprimitive by embedding it into a
wreath product of Galois groups of Schubert problems on smaller Grassmannians.

In Section~\ref{S:SymmFn} we recall some standard facts about Schur functions and the Littlewood-Richardson formula, and
then deduce some results when the indexing partitions involve large rectangles.
In Section~\ref{S:composition} we use these formulas to give some results in the cohomology ring of Grassmannians.
We also define the main objects in this paper, a composable Schubert problem and a composition of Schubert problems,
and prove our main theorem which is a product formula for the number of solutions to a composition of Schubert problems.
In Section~\ref{Sec:GG} we define a family (\demph{block column Schubert problems}) of composable Schubert problems and
prove a geometric result that implies the Galois group of a composition with a block column Schubert problem is
imprimitive. 
We also present computational evidence for a conjecture that all nontrivial compositions have imprimitive Galois groups.

\section{Schur functions and Littlewood-Richardson coefficients}\label{S:SymmFn}

We recall facts about Schur functions and Littlewood-Richardson
coefficients which are found in~\cite{Fulton,Macdonald,Sagan,Stanley}.
We use these to establish some formulas for products of Schur functions whose partitions contain large rectangles.
This includes bijections involving generalized Littlewood-Richardson tableaux which count the coefficients in such
products.

\subsection{Symmetric functions}
A \demph{partition} \defcolor{$\lambda$} is a weakly decreasing sequence
$\lambda\colon \lambda_1\geq\lambda_2\geq\dotsb\geq\lambda_k>0$ of positive integers.
We set $\defcolor{|\lambda|}:=\lambda_1+\dotsb+\lambda_k$.
Here $k$ is the \demph{length} of $\lambda$, written \defcolor{$\ell(\lambda)$}.
Each integer $\lambda_i$ is a \demph{part} of $\lambda$.
When $a>\ell(\lambda)$, we write $\lambda_a=0$, and we identify partitions that differ only in trailing parts of size $0$.
The set of all partitions forms a partial order called \demph{Young's lattice} in which \defcolor{$\lambda\subseteq\mu$} when 
$\lambda_i\leq\mu_i$ for all $i$.
We often identify a partition with its \demph{Young diagram}, which is a left-justified array of $|\lambda|$ boxes with
$\lambda_i$ boxes in the $i$th row.
Here are three partitions and their Young diagrams.
\[
  (3)\ =\ \raisebox{-1.5pt}{\includegraphics{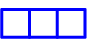}}
  \qquad
  (4,2)\ =\ \raisebox{-5.5pt}{\includegraphics{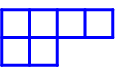}}
  \qquad
  (6,3,1)\ =\ \raisebox{-9.5pt}{\includegraphics{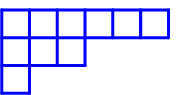}}
\]
The partial order among partitions in Young's lattice becomes set-theoretic containment of their Young diagrams, which
explains our notation $\lambda\subseteq\mu$.
The \demph{conjugate} \defcolor{$\lambda'$} of a partition $\lambda$ is the partition whose Young diagram is the
matrix-transpose of the Young diagram of $\lambda$.
For example,
\[
  (3)'\ =\ \raisebox{-1.5pt}{\includegraphics{pictures/3}}\,'\ =\
  \raisebox{-9.5pt}{\includegraphics{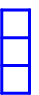}}
  \qquad\mbox{and}\qquad
  (4,3,1)'\ =\ \raisebox{-9.5pt}{\includegraphics{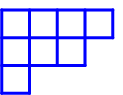}}
                \raisebox{9pt}{$\,'$}\ =\
  \raisebox{-13.5pt}{\includegraphics{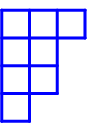}} 
\]
Conjugating a partition interchanges rows with columns.

A homogeneous formal power series $f\in\ZZ\llbracket x_1,x_2,\dotsc \rrbracket$ is a \demph{symmetric function} 
if it is invariant under permuting the variables.
For any nonnegative integer $n$, the set \defcolor{$\Lambda_n$} of symmetric functions of degree $n$ is a finite rank free
abelian group with a distinguished $\ZZ$-basis of \demph{Schur functions} \defcolor{$S_\lambda$} indexed by partitions
$\lambda$ with $|\lambda|=n$. 

The graded algebra \defcolor{$\Lambda$} of symmetric functions is the direct sum of all $\Lambda_n$ for $n\geq 0$.
Here, the empty partition \defcolor{$\emptyset$} is the unique partition of $0$, and $S_\emptyset=1$, the multiplicative
identity for $\Lambda$.
The set of all Schur functions forms a $\ZZ$-basis for $\Lambda$.
Consequently, there are integer \demph{Littlewood-Richardson coefficients} $\defcolor{c^\lambda_{\mu,\nu}}\in\ZZ$ defined
for triples $\lambda,\mu,\nu$ of partitions by expanding the product $S_\mu\cdot S_\nu$ in the
basis of Schur functions,
\[
  S_\mu\cdot S_\nu\ =\ \sum_{\lambda} c^\lambda_{\mu,\nu} S_\lambda\,.
\]
Conjugation of partitions induces the \demph{fundamental involution $\omega$} on $\Lambda$ defined by 
$\omega(S_{\lambda})=S_{\lambda'}$.
This is an algebra automorphism, and applying it to the product above shows that
$c^\lambda_{\mu,\nu}=c^{\lambda'}_{\mu',\nu'}$.
We will use this involution and identity throughout.

As $\Lambda$ is graded, if $c^\lambda_{\mu,\nu}\neq 0$, then $|\lambda|=|\mu|+|\nu|$.
More fundamentally, each $c^\lambda_{\mu,\nu}$ is nonnegative.
We say that a Schur function $S_\lambda$ \demph{occurs} in a product $S_{\mu^1}\dotsb S_{\mu^r}$ of Schur functions
if its coefficient in the expansion of that product in the Schur basis is positive.

The Littlewood-Richardson rule is a formula for the Littlewood-Richardson coefficients $c^\lambda_{\mu,\nu}$.
When $\nu\subset\lambda$, we write \defcolor{$\lambda/\nu$} for the set-theoretic difference of their diagrams, which is
called a \demph{skew shape}.
We may fill the boxes in $\lambda/\nu$ with positive integers that weakly increase left-to-right across each row and
strictly increase down each column, obtaining a \demph{skew tableau}.
The \demph{content} of a skew tableau $T$ is the sequence whose $i$th element is the number of occurrences of $i$ in $T$.
The word of a tableau $T$ is the sequence of its entries read  right-to-left, and
from the top row of $T$ to its bottom row.
A \demph{Littlewood-Richardson skew tableau} $T$ is a skew tableau in which the content of any initial segment of its
word is a partition.
(We extend the notion of content to any sequence of positive integers.)
That is, when reading $T$ in the order of its word, at every position within the word, the number of $i$'s encountered is
always at least the number of $(i{+}1)$'s encountered, for every $i$.
Below is the skew shape $(6,4,2,2)/(3,2)$ and two tableaux with that shape and content $(4,3,1,1)$ whose words are
$111322142$ and $111223142$, respectively.
Only the second is a Littlewood-Richardson tableau.
\[
  \begin{picture}(73,49)(-3.5,-3)
    \put(-3.5,-3){\includegraphics{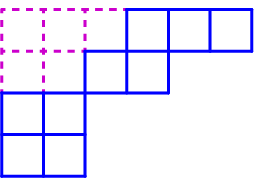}}
  \end{picture}
  \qquad
  \begin{picture}(73,49)(-3.5,-3)
    \put(-3.5,-3){\includegraphics{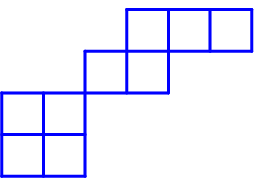}}
     \put(36,36){\small$1$} \put(48,36){\small$1$}\put(60,36){\small$1$}
     \put(24,24){\small$2$} \put(36,24){\small$3$}
     \put( 0,12){\small$1$} \put(12,12){\small$2$}
     \put( 0, 0){\small$2$} \put(12, 0){\small$4$}
  \end{picture}  
  \qquad
  \begin{picture}(73,49)(-3.5,-3)
    \put(-3.5,-3){\includegraphics{pictures/skewShape}}
     \put(36,36){\small$1$} \put(48,36){\small$1$}\put(60,36){\small$1$}
     \put(24,24){\small$2$} \put(36,24){\small$2$}
     \put( 0,12){\small$1$} \put(12,12){\small$3$}
     \put( 0, 0){\small$2$} \put(12, 0){\small$4$}
  \end{picture}  
\]
The Littlewood-Richardson coefficient  $c^\lambda_{\mu,\nu}$ is the number of Littlewood-Richardson tableaux of shape
$\lambda/\nu$ and content $\mu$.

We record some consequences of this formula.
First, if $c^\lambda_{\mu,\nu}\neq 0$, then $\mu,\nu\subseteq\lambda$ in Young's lattice.
The set of partitions $\lambda$ with $|\lambda|$ a fixed positive integer has a partial order, called \demph{dominance}.
For any positive integer $k$, write $|\lambda|_k$ for $\lambda_1+\dotsb+\lambda_k$.
Given partitions $\lambda$ and $\mu$ with $|\lambda|=|\mu|$, we say that $\mu$ \demph{dominates} $\lambda$,
written \defcolor{$\lambda \trianglelefteq \mu$} if, for all $k$, we have
$|\lambda|_k\leq|\mu|_k$.
Conjugation of partitions is an anti-involution of the dominance poset in that
$\lambda \trianglelefteq \mu$ if and only if $\mu' \trianglelefteq \lambda'$.

Given partitions $\mu$ and $\nu$, let \defcolor{$(\mu,\nu)$} be the partition of $|\mu|+|\nu|$ obtained by sorting the
sequence $(\mu_1,\dotsc,\mu_{\ell(\mu)},\nu_1,\dotsc,\nu_{\ell(\nu)})$ in decreasing order.
Let \defcolor{$\mu+\nu$} be the partition of  $|\mu|+|\nu|$ whose $k$th part is $\mu_k+\nu_k$.
Note that $(\mu,\nu)'=\mu'+\nu'$.

\begin{proposition}\label{P:LR_Dominance}
  If $\lambda$, $\mu$, and $\nu$ are partitions with $c^\lambda_{\mu,\nu}\neq 0$, then
\[
  (\mu,\nu)\ \trianglelefteq\ \lambda \trianglelefteq\ \mu+\nu\,.
\]
  Furthermore, $c^{(\mu,\nu)}_{\mu,\nu}=c^{\mu+\nu}_{\mu,\nu}=1$.
\end{proposition}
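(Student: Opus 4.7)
The plan is to deduce both inequalities directly from the Littlewood-Richardson rule, leveraging the standard fact that in any Littlewood-Richardson tableau, every entry in row $i$ is at most $i$. This row-bound follows from a short induction on $i$ using the lattice condition applied at the rightmost (hence largest) entry of each row: reading it gives the first ``$j$'' encountered after all the entries $\leq i{-}1$ in rows $1,\dotsc,i{-}1$, and lattice forces $j\leq i$. I would either cite this from~\cite{Fulton,Sagan,Stanley} or record it as a preliminary lemma.

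Granted the row-bound, the upper bound $\lambda \trianglelefteq \mu+\nu$ is almost immediate. Pick any Littlewood-Richardson tableau $T$ of shape $\lambda/\nu$ with content $\mu$, which exists by hypothesis. The first $k$ rows of $\lambda/\nu$ contain $|\lambda|_k - |\nu|_k$ boxes, each filled with a value at most $k$. Since the total number of entries of value at most $k$ in $T$ is $\mu_1+\dotsb+\mu_k = |\mu|_k$, we obtain $|\lambda|_k - |\nu|_k \leq |\mu|_k$, which is exactly $|\lambda|_k \leq |\mu+\nu|_k$. The lower bound $(\mu,\nu) \trianglelefteq \lambda$ then follows formally: applying the just-proved upper bound to $c^{\lambda'}_{\mu',\nu'} = c^\lambda_{\mu,\nu}$ yields $\lambda' \trianglelefteq \mu'+\nu' = (\mu,\nu)'$, and the anti-involution property of conjugation on dominance (recalled just before the proposition) converts this to $(\mu,\nu) \trianglelefteq \lambda$.

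For the value-one assertions, I would first treat $c^{\mu+\nu}_{\mu,\nu}$. The skew shape $(\mu+\nu)/\nu$ has exactly $\mu_i$ boxes in row $i$. Comparing the two counts ``boxes in the first $k$ rows equal $|\mu|_k$'' and ``entries of value $\leq k$ equal $|\mu|_k$'' for each $k$, together with the row-bound, forces every entry of value $k$ to lie in row $k$; hence row $k$ is filled entirely with $k$'s. A quick check verifies this filling is semistandard, and its reverse reading word is the lattice word $1^{\mu_1} 2^{\mu_2}\dotsb$, so exactly one Littlewood-Richardson tableau exists. The companion equality $c^{(\mu,\nu)}_{\mu,\nu} = 1$ follows by applying the fundamental involution $\omega$ and the identity $(\mu,\nu)' = \mu'+\nu'$.

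The main obstacle is essentially only the row-bound lemma; everything else is cardinality bookkeeping. In particular, once the row-bound is in place, the equality case for $\mu+\nu$ is forced by an exact count rather than by any subtle tableau argument, which is what makes this proposition a clean consequence of the Littlewood-Richardson rule.
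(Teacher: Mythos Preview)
Your proof is correct and follows essentially the same approach as the paper's: both use the row-bound on Littlewood-Richardson tableaux to get $\lambda\trianglelefteq\mu+\nu$ and the uniqueness at $\mu+\nu$, then invoke conjugation for the other inequality and the other extremal coefficient. The only cosmetic difference is that the paper works with tableaux of shape $\lambda/\mu$ and content $\nu$ while you use shape $\lambda/\nu$ and content $\mu$, which is immaterial by the symmetry of $c^\lambda_{\mu,\nu}$; your write-up is in fact more detailed than the paper's terse paragraph.
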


\begin{proof}
 In any Littlewood-Richardson skew tableau of shape $\lambda/\mu$ the first $i$ rows can only contain entries $1,\dotsc,i$,
 and thus if it has content $\nu$, there are at most $\nu_1+\dotsb+\nu_i$ boxes in these first $i$ rows.
 Since the $i$th row of $(\mu{+}\nu)/\mu$ has $\nu_i$ boxes, we see that there is a unique Littlewood-Richardson tableau of
 this shape and content $\nu$, and that for any other Littlewood-Richardson skew tableau of shape $\lambda/\mu$ and content
 $\nu$, we must have $\lambda\triangleleft\mu+\nu$.
 Applying conjugation gives the other inequality.
\end{proof}

\begin{corollary}\label{C:inequality}
  If $S_\lambda$ occurs in the product $S_\mu\cdot S_\nu$, then for any nonnegative integers $a,b$, we have
  $|\mu|_a+|\nu|_b\leq|\lambda|_{a+b}$.
\end{corollary}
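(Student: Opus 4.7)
The plan is to deduce the corollary directly from the dominance inequality $(\mu,\nu) \trianglelefteq \lambda$ supplied by Proposition~\ref{P:LR_Dominance}, together with a simple observation about how the partition $(\mu,\nu)$ is built from the parts of $\mu$ and $\nu$.

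First I would translate the hypothesis: $S_\lambda$ occurring in $S_\mu\cdot S_\nu$ means $c^\lambda_{\mu,\nu}\neq 0$, so Proposition~\ref{P:LR_Dominance} applies, giving the dominance relation $(\mu,\nu)\trianglelefteq\lambda$. Unpacking the definition of dominance, this yields in particular
\[
  |(\mu,\nu)|_{a+b}\ \leq\ |\lambda|_{a+b}
\]
for the specific index $a+b$ we care about.

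Next, I would bound $|\mu|_a+|\nu|_b$ from above by $|(\mu,\nu)|_{a+b}$. The key point is combinatorial: by definition, $(\mu,\nu)$ is the partition obtained by merging the multisets of parts of $\mu$ and $\nu$ and sorting in decreasing order, so $|(\mu,\nu)|_{a+b}$ is the maximum sum of any $a+b$ parts chosen from the combined multiset. The particular choice of the top $a$ parts of $\mu$ (contributing $|\mu|_a$) together with the top $b$ parts of $\nu$ (contributing $|\nu|_b$) provides one such selection of $a+b$ parts, so
\[
  |\mu|_a+|\nu|_b\ \leq\ |(\mu,\nu)|_{a+b}\,.
\]
Chaining the two inequalities gives the desired bound $|\mu|_a+|\nu|_b\leq |\lambda|_{a+b}$.

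There is essentially no obstacle here — once Proposition~\ref{P:LR_Dominance} is in hand, the only content is the elementary fact that the top $a$ entries of $\mu$ plus the top $b$ entries of $\nu$ form one (usually non-optimal) choice of $a+b$ entries from the sorted merge $(\mu,\nu)$. The only minor care needed is the edge cases $a=0$ or $b=0$ (where $|\mu|_0=0$) and $a>\ell(\mu)$ or $b>\ell(\nu)$ (where one uses the convention that extra parts are zero); in all cases the selection argument above goes through verbatim.
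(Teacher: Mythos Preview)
Your proof is correct and follows essentially the same approach as the paper's: invoke Proposition~\ref{P:LR_Dominance} to get $(\mu,\nu)\trianglelefteq\lambda$ and hence $|(\mu,\nu)|_{a+b}\leq|\lambda|_{a+b}$, then observe that since $(\mu,\nu)$ is the decreasing rearrangement of all parts of $\mu$ and $\nu$, one has $|\mu|_a+|\nu|_b\leq|(\mu,\nu)|_{a+b}$. Your write-up simply adds a bit more explanation of the combinatorial step and notes the harmless edge cases.
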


\begin{proof}
  As $(\mu,\nu)$ is the decreasing rearrangement of the parts of $\mu$ and of $\nu$, we have
  $|\mu|_a+|\nu|_b\leq|(\mu,\nu)|_{a+b}$.
  Since $(\mu,\nu)\triangleleft\lambda$ by Proposition~\ref{P:LR_Dominance}, we have
  $|(\mu,\nu)|_{a+b}\leq|\lambda|_{a+b}$, which implies the inequality.
\end{proof}

\subsection{Schur functions of partitions with large rectangles}
We establish some results about products of Schur functions whose partitions contain large rectangles.
These are needed for our main combinatorial result about the number of solutions to a composition of Schubert problems in
Section~\ref{SS:composition}.

Let $a,d$ be nonnegative integers.
Write \defcolor{$\Box_{a,d}$} for the rectangular partition $(d,\dotsc,d)$ with $a$ parts, each of size $d$.
Its Young diagram is an $a\times d$ rectangular array of $ad$ boxes.
Suppose that $\mu,\alpha$ are partitions with $a\geq\ell(\mu)$ and $d\geq\alpha_1$.
Then \defcolor{$\Box_{a,d}{+}\mu$} is the partition whose $k$th part is $d+\mu_k$, for $k\leq a=\ell(\Box_{a,d}{+}\mu)$.
Also,  \defcolor{$(\Box_{a,d}{+}\mu,\alpha)$} is the partition whose parts are those of $\Box_{a,d}{+}\mu$ followed by
those of $\alpha$, as $d\geq\alpha_1$.
The Young diagram of $(\Box_{a,d}{+}\mu,\alpha)$ is obtained from the rectangle $\Box_{a,d}$ by placing the diagram of
$\mu$ to the right of the rectangle and the diagram of $\alpha$ below the rectangle.
\[
  \mu\ =\ \raisebox{-7.5pt}{\includegraphics{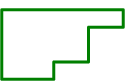}}
  \qquad
  \alpha\ =\ \raisebox{-5pt}{\includegraphics{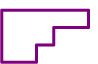}}
  \qquad \quad
  (\Box_{a,d}+\mu,\alpha)\ =\ 
  \raisebox{-17.56pt}{\begin{picture}(86,41)
    \put(0,0){\includegraphics{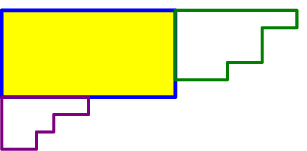}}
    \put(3,26){$a$}     \put(33,18){$d$}
    \put(3,7){$\alpha$}  \put(55,30){$\mu$}  
  \end{picture}}
\]
Observe that the conjugate of $(\Box_{a,d}+\mu,\alpha)$ is $(\Box_{d,a}+\alpha',\mu')$.

\begin{lemma}\label{Lemma:LiOne}
  Let $a,d$ be nonnegative integers and $\mu,\alpha$ be partitions with $a\geq\ell(\mu)$ and $d\geq \alpha_1$.
  Then
 \[
   S_{\Box_{a,d}+\mu}\cdot S_\alpha\ =\ S_{(\Box_{a,d}+\mu,\alpha)}\ +\
   \sum_{(\Box_{a,d}+\mu,\alpha)\,\triangleleft\,\nu} c^\nu_{\Box_{a,d}+\mu,\alpha} S_\nu\,,
  \]
 and if $\nu$ indexes a nonzero term in the sum, then
  \begin{equation}\label{Eq:LiFormula} 
   |\nu|_a\ >\  |\Box_{a,d}+\mu|\ =\ ad+|\mu|\,.
  \end{equation}
\end{lemma}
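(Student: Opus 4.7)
The plan is to deduce the displayed Schur expansion directly from Proposition~\ref{P:LR_Dominance} and then to establish the strict inequality~\eqref{Eq:LiFormula} by combining Young-diagram containment with a short rigidity argument for Littlewood-Richardson tableaux of straight shape.

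Applied to the product $S_{\Box_{a,d}+\mu}\cdot S_\alpha$, Proposition~\ref{P:LR_Dominance} yields $c^{(\Box_{a,d}+\mu,\alpha)}_{\Box_{a,d}+\mu,\alpha}=1$ and shows that any other $\nu$ with $c^\nu_{\Box_{a,d}+\mu,\alpha}\neq 0$ satisfies $(\Box_{a,d}+\mu,\alpha)\triangleleft\nu$. This is precisely the asserted Schur expansion.

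For the inequality, first observe that $c^\nu_{\Box_{a,d}+\mu,\alpha}\neq 0$ forces $\nu\supseteq\Box_{a,d}+\mu$, so $\nu_i\geq d+\mu_i$ for $1\leq i\leq a$ and therefore $|\nu|_a\geq ad+|\mu|$. I would rule out equality for $\nu\neq(\Box_{a,d}+\mu,\alpha)$ by contradiction: if $|\nu|_a=ad+|\mu|$, then $\nu_i=d+\mu_i$ for every $i\leq a$, so the skew shape $\nu/(\Box_{a,d}+\mu)$ has no boxes in rows $1,\dotsc,a$ and is, as a set of boxes, the straight shape $\beta:=(\nu_{a+1},\nu_{a+2},\dotsc)$ sitting in rows $a{+}1,a{+}2,\dotsc$. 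A Littlewood-Richardson tableau of a straight shape with content $\alpha$ can only exist when that shape equals $\alpha$: inductively, once rows $1,\dotsc,i{-}1$ are filled with the values $1,\dotsc,i{-}1$ respectively, the rightmost entry of row $i$ is forced to be $i$ (it is $\geq i$ by column-strict increase and $\leq i$ by the lattice-word condition on the content after reading it), which then fills all of row $i$ with $i$'s. Hence $\beta=\alpha$ and $\nu=(\Box_{a,d}+\mu,\alpha)$, a contradiction. The hypothesis $d\geq\alpha_1$ enters only to guarantee that $(\Box_{a,d}+\mu,\alpha)$ is a genuine partition.

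I do not expect any substantive obstacle. The only delicate point is the rigidity of straight-shape Littlewood-Richardson fillings used in the last step, and that is a direct consequence of the lattice-word condition in the Littlewood-Richardson rule.
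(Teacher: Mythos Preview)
Your proposal is correct and follows essentially the same route as the paper: invoke Proposition~\ref{P:LR_Dominance} for the expansion, use containment $\Box_{a,d}+\mu\subseteq\nu$ to get $|\nu|_a\geq ad+|\mu|$, and in the equality case observe that $\nu/(\Box_{a,d}{+}\mu)$ is a straight shape $\beta$, whence the rigidity of Littlewood-Richardson tableaux of partition shape forces $\beta=\alpha$. The paper simply asserts that a Littlewood-Richardson tableau of partition shape $\lambda$ must have content $\lambda$, whereas you supply the short inductive justification; otherwise the arguments coincide.
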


\begin{proof}
  By Proposition~\ref{P:LR_Dominance}, the coefficient of
  $S_{(\Box_{a,d}+\mu,\alpha)}$ in the product is 1, and the other nonzero terms are indexed by partitions
  $\nu$ that  strictly dominate $(\Box_{a,d}+\mu,\alpha)$.
  Thus, 
 \[
  |\nu|_a\ \geq\ |(\Box_{a,d}+\mu,\alpha)|_a\ =\ |\Box_{a,d}+\mu|\ =\ ad + |\mu|\,.
 \]
 If the inequality is strict, then we have~\eqref{Eq:LiFormula}.

 Suppose that $|\nu|_a=ad + |\mu|$.
 Since $\nu\geq \Box_{a,d}{+}\mu$, and thus $\nu_i\geq d+\mu_i = (\Box_{a,d}{+}\mu)_i$, this implies that
 the partition, $(\nu_1,\dotsc,\nu_a)$, formed by the first $a$ parts of $\nu$ equals $\Box_{a,d}{+}\mu$.
 If we set $\beta=(\nu_{a+1},\dotsc)$, then $\nu=(\Box_{a,d}{+}\mu,\beta)$.
 In particular $\beta_1\leq d+\mu_a$ and $|\beta|=|\alpha|$.
 By the Littlewood-Richardson rule,  $c^\nu_{\Box_{a,d}+\mu,\alpha}$ counts the number of Littlewood-Richardson skew
 tableaux of shape $\nu/(\Box_{a,d}{+}\mu)=\beta$ and content $\alpha$.
 But a Littlewood-Richardson tableau of partition shape $\lambda$ must have content $\lambda$, which shows that
 $\beta=\alpha$ and completes the proof of the lemma.
\end{proof}

\begin{lemma}\label{L:KeyStep}
  Let $d,a_1,\dotsc,a_r$ be nonnegative integers and $\mu^1,\dotsc,\mu^r,\alpha^1,\dotsc,\alpha^r$ be partitions with 
  $a_i\geq\ell(\mu^i)$ and $d\geq\alpha^i_1$, for $i=1,\dotsc,r$.
  If $S_\nu$ occurs in the product  $\prod_{i=1}^rS_{(\Box_{a_i,d}+\mu^i,\alpha^i)}$
  of $r$ terms, then
 \[
    |\nu|_{a_1+\dotsb +a_r}\ \geq\ d(a_1+\dotsb +a_r)\, +\, |\mu^1| + \dotsb + |\mu^r|\,.
 \]
\end{lemma}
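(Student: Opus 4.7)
The plan is to prove the lemma by induction on the number $r$ of factors, with Corollary~\ref{C:inequality} as the workhorse that splits the partial sum $|\nu|_{a_1+\cdots+a_r}$ across a product of two Schur functions, one factor at a time. The key numerical observation that drives everything is that the first $a_i$ parts of the partition $(\Box_{a_i,d}+\mu^i,\alpha^i)$ are exactly $d+\mu^i_1,\dotsc,d+\mu^i_{a_i}$ (using the hypothesis $a_i\geq\ell(\mu^i)$), so
\[
  \bigl|(\Box_{a_i,d}+\mu^i,\alpha^i)\bigr|_{a_i}\ =\ d\,a_i+|\mu^i|\,.
\]
This is the exact number the lemma expects to see contributed by the $i$th factor.

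For the base case $r=1$, the ``product'' is the single Schur function $S_{(\Box_{a_1,d}+\mu^1,\alpha^1)}$, so $\nu=(\Box_{a_1,d}+\mu^1,\alpha^1)$ and the identity above gives equality in the claimed inequality.

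For the inductive step, assume the result holds for any product of $r-1$ factors. Expand
\[
   \prod_{i=2}^{r} S_{(\Box_{a_i,d}+\mu^i,\alpha^i)}\ =\ \sum_{\kappa} c_\kappa\, S_\kappa\,.
\]
If $S_\nu$ occurs in the full product of $r$ factors, then $S_\nu$ must occur in $S_{(\Box_{a_1,d}+\mu^1,\alpha^1)}\cdot S_\kappa$ for some $\kappa$ with $c_\kappa>0$. Applying Corollary~\ref{C:inequality} with $a=a_1$ and $b=a_2+\cdots+a_r$ gives
\[
  |\nu|_{a_1+\cdots+a_r}\ \geq\ \bigl|(\Box_{a_1,d}+\mu^1,\alpha^1)\bigr|_{a_1}\,+\,|\kappa|_{a_2+\cdots+a_r}\,.
\]
By the numerical identity the first term equals $d\,a_1+|\mu^1|$, while the inductive hypothesis applied to $\kappa$ bounds the second term below by $d(a_2+\cdots+a_r)+|\mu^2|+\cdots+|\mu^r|$. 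Adding yields the desired inequality.

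The argument is essentially routine once Corollary~\ref{C:inequality} is in hand; the only point requiring care is verifying that $a_1\geq\ell(\mu^1)$ ensures the first $a_1$ parts of $(\Box_{a_1,d}+\mu^1,\alpha^1)$ lie entirely in the upper block (rectangle-plus-$\mu^1$) and not in the $\alpha^1$ block, so that the contribution from the first factor is exactly $d\,a_1+|\mu^1|$ rather than something larger or smaller.
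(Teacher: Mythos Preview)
Your proof is correct and follows essentially the same approach as the paper's: induction on $r$, peeling off one factor and applying Corollary~\ref{C:inequality} together with the identity $\bigl|(\Box_{a_i,d}+\mu^i,\alpha^i)\bigr|_{a_i}=d\,a_i+|\mu^i|$. The only difference is that you peel off the first factor while the paper peels off the $r$th, which is immaterial.
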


\begin{proof}
  We prove this by induction on the number $r$ of factors of the form $S_{(\Box_{a_i,d}+\mu^i,\alpha^i)}$.
  There is nothing to prove when $r=1$ as in this case $\nu=(\Box_{a,d}+\mu^1,\alpha^1)$.
  Suppose that the statement holds for $r{-}1$ factors, and that $S_\nu$ occurs in the product of
  $r$ factors.
  By the positivity of the coefficients of Schur functions in products of Schur functions, $S_\nu$ occurs in a product
  $S_{(\Box_{a_r,d}+\mu^r,\alpha^r)}\cdot S_\lambda$, where $S_\lambda$ occurs in the product with $r{-}1$ factors.
  By Corollary~\ref{C:inequality} and our induction hypothesis,
  \begin{eqnarray*}
    |\nu|_{a_1+\dotsb +a_r}& \geq& |(\Box_{a_r,d}{+}\mu^r,\alpha^r)|_{a_r}\,+\,|\lambda|_{a_1+\dotsb +a_{r-1}} \\
                        & \geq& d a_r \,+\, |\mu^r|\; +\; d(a_1+\dotsb +a_{r-1})\,+\, |\mu^1| + \dotsb + |\mu^{r-1}| \\
    &=& d(a_1+\dotsb +a_{r})\,+\, |\mu^1| + \dotsb + |\mu^{r}|\,,
  \end{eqnarray*}
  which completes the proof.
\end{proof}

We give a combinatorial description of some of the coefficients which appear in a product of the form
$\prod_{i=1}^rS_{\Box_{a_i,d}+\mu^i}$.
A \demph{generalized Littlewood-Richardson tableau}~\cite[Ex.~3.7]{BSS} \defcolor{$\Tdot$} of shape $\lambda$ and content 
$\defcolor{\umu}=(\mu^1,\dotsc,\mu^r)$ is a sequence of partitions,
\[
  \emptyset\ =\ \lambda^0\ \subseteq\ \lambda^1\ \subseteq\ \dotsb\ \subseteq\ \lambda^r\ =\ \lambda\,,
\]
 together with Littlewood-Richardson tableaux $T_1,\dotsc,T_r$ where $T_i$ has shape
$\lambda^i/\lambda^{i-1}$ and content $\mu^i$.
A repeated application of the Littlewood-Richardson rule implies the following.

\begin{proposition}\label{P:GLRTableaux}
   The coefficient \defcolor{$c^\lambda_{\umu}$} of $S_\lambda$ in the product $\prod_{i=1}^r S_{\mu^i}$ is equal to the
   number of generalized Littlewood-Richardson tableaux of shape $\lambda$ and content $\umu$.
\end{proposition}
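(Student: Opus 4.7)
The plan is to prove Proposition~\ref{P:GLRTableaux} by induction on the number $r$ of factors, using the Littlewood-Richardson rule at each step as the only nontrivial input.

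For the base case $r=1$, the coefficient $c^\lambda_{(\mu^1)}$ is $1$ if $\lambda=\mu^1$ and $0$ otherwise. On the combinatorial side, a generalized Littlewood-Richardson tableau of shape $\lambda$ and content $(\mu^1)$ is a chain $\emptyset=\lambda^0\subseteq\lambda^1=\lambda$ equipped with a Littlewood-Richardson tableau of shape $\lambda/\emptyset=\lambda$ and content $\mu^1$. Since a Littlewood-Richardson tableau of straight (partition) shape $\lambda$ necessarily has content $\lambda$ — a fact already invoked in the proof of Lemma~\ref{Lemma:LiOne} — such a tableau exists uniquely when $\lambda=\mu^1$ and not otherwise, matching the coefficient.

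For the inductive step, assume the statement holds for $r-1$ factors. Associativity of multiplication in $\Lambda$ gives
\[
 \prod_{i=1}^r S_{\mu^i}\ =\ \Bigl(\prod_{i=1}^{r-1} S_{\mu^i}\Bigr)\cdot S_{\mu^r}
  \ =\ \sum_{\kappa}\Bigl(\sum_{\lambda^{r-1}} c^{\lambda^{r-1}}_{(\mu^1,\dotsc,\mu^{r-1})}\, c^{\kappa}_{\lambda^{r-1},\mu^r}\Bigr) S_\kappa\,,
\]
by first expanding the initial $r-1$ factors and then applying the definition of the Littlewood-Richardson coefficients to multiply by $S_{\mu^r}$. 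Reading off the coefficient of $S_\lambda$ yields
\[
 c^\lambda_{\umu}\ =\ \sum_{\lambda^{r-1}} c^{\lambda^{r-1}}_{(\mu^1,\dotsc,\mu^{r-1})}\, c^{\lambda}_{\lambda^{r-1},\mu^r}\,.
\]
By the inductive hypothesis, each summand $c^{\lambda^{r-1}}_{(\mu^1,\dotsc,\mu^{r-1})}$ counts chains $\emptyset=\lambda^0\subseteq\lambda^1\subseteq\dotsb\subseteq\lambda^{r-1}$ together with Littlewood-Richardson tableaux $T_1,\dotsc,T_{r-1}$ of the required shapes and contents, while by the Littlewood-Richardson rule $c^{\lambda}_{\lambda^{r-1},\mu^r}$ counts the possible extensions by a final tableau $T_r$ of shape $\lambda/\lambda^{r-1}$ and content $\mu^r$. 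Summing over the intermediate $\lambda^{r-1}$ enumerates precisely the generalized Littlewood-Richardson tableaux of shape $\lambda$ and content $\umu$, completing the induction.

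There is no real obstacle here; the proposition is essentially a bookkeeping consequence of iterating the Littlewood-Richardson rule. The only point that merits care is the base case, where one must observe that a straight-shape Littlewood-Richardson tableau is forced to have the same content as its shape, so that the combinatorial count correctly reduces to a Kronecker delta.
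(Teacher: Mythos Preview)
Your proof is correct and is precisely what the paper means by ``a repeated application of the Littlewood-Richardson rule implies the following''; the paper does not spell out the induction, so your write-up simply makes explicit the argument the paper leaves to the reader.
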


Let $\lambda,\mu,\nu$ be partitions with $|\lambda|=|\mu|+|\nu|$ and $\mu,\nu\subset\lambda$.
Suppose that $l,m,n$ are integers with $l=m+n$ and $l\geq\ell(\lambda)$, $m\geq\ell(\mu)$, and $n\geq\ell(\nu)$.
For any $d\geq 0$ the skew shape $(\Box_{l,d}+\lambda)/(\Box_{n,d}+\nu)$ is obtained by placing
the rectangle $\Box_{m,d}$ to the left of the skew shape $\lambda/\nu$, with the last row of $\Box_{m,d}$ at row $l$.
Write \defcolor{$\Box_{m,d}+\lambda/\nu$} for this shape.
We illustrate this, shading the skew shapes.
\[
  \begin{picture}(91,41)(-40,0)
      \put(0,0){\includegraphics{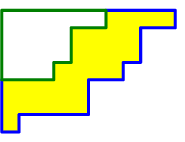}}
      \put(-40,18){$\lambda/\nu=$}
      \put(5,30){$\nu$}   \put(36,13){$\lambda$} 
  \end{picture}
   \qquad\qquad
  \begin{picture}(173,41)(-77,0)
    \put(0,0){\includegraphics{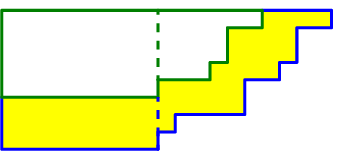}}
    \put(-77,18){$\Box_{m,d}+\lambda/\nu=$}
    \put(3,25){$n$}  \put(3,5){$m$} \put(20,18){$d$} 
  \end{picture}
\]
In any Littlewood-Richardson tableau $T$ of shape $\Box_{m,d}{+}\lambda/\nu$ with content $\Box_{m,d}{+}\mu$, the rectangle
$\Box_{m,d}$ in $T$ is \demph{frozen} in that each of its $d$ columns are filled with $1,\dotsc,m$.
%
%
Removing the frozen rectangle gives a Littlewood-Richardson tableau of shape $\lambda/\nu$ and content $\mu$
(this may be proved by an induction on $d$, which we omit).

\begin{lemma}\label{L:frozen}
  Adding or removing frozen rectangles from skew Littlewood-Richardson tab\-leaux gives a bijection
\[
  \left\{\begin{minipage}[c]{2.2in}
      \begin{center}  Littlewood-Richardson tableaux of shape
        $\lambda/\nu$ and content $\mu$.
     \end{center}
     \end{minipage}\right\}
      \quad\longleftrightarrow\quad
  \left\{\begin{minipage}[c]{2.7in}
      \begin{center}   Littlewood-Richardson tableaux of shape
        $\Box_{m,d}{+}\lambda/\nu$ and content $\Box_{m,d}{+}\mu$.
     \end{center}
     \end{minipage}\right\}\,.
\]
\end{lemma}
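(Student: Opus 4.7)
The plan is to construct an explicit bijection: starting from a Littlewood-Richardson tableau $S$ of shape $\lambda/\nu$ and content $\mu$, define $T$ on $\Box_{m,d}+\lambda/\nu$ by shifting $S$ rightward by $d$ columns and filling the rectangle $\Box_{m,d}$ so that each of its $d$ columns reads $1,2,\dotsc,m$ from top to bottom. Conversely, from an LR tableau $T$ of shape $\Box_{m,d}+\lambda/\nu$ with content $\Box_{m,d}+\mu$, extract $S$ by deleting the rectangle. I would first verify that the rectangle in $T$ must indeed be frozen, so that the two operations are mutually inverse, and then verify that each preserves the LR property.

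That the rectangle is automatically frozen is immediate: the content $\Box_{m,d}+\mu$ has exactly $m$ parts (each $\geq d\geq 1$, using $m\geq\ell(\mu)$), so entries of $T$ lie in $\{1,\dotsc,m\}$; each of the columns $1,\dotsc,d$ of the skew shape contains exactly $m$ boxes (in rows $n+1,\dotsc,l$), so column strictness forces these columns to be exactly $(1,2,\dotsc,m)$ from top to bottom. For the forward direction, semistandardness of $T$ additionally requires the leftmost non-frozen entry in row $n+i$ to be $\geq i$, which follows from column strictness of $S$ in column $1$ of $\lambda/\nu$ together with the hypothesis $n\geq\ell(\nu)$.

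For the Yamanouchi conditions I would use uniform bookkeeping. Write the reading word of $T$ as $W_1\dotsb W_n X_1 Y_1 X_2 Y_2 \dotsb X_m Y_m$, where $W_r$ encodes row $r\leq n$, $X_i$ encodes the non-frozen entries of row $n+i$ read right-to-left, and $Y_i=i^d$ records the frozen portion of row $n+i$. The reading word of $S$ is $W_1\dotsb W_n X_1 X_2\dotsb X_m$. Letting $S_j$ and $T_j$ denote counts of $j$ at corresponding prefixes, at a prefix lying inside $X_i$ one has $T_j=S_j+d\cdot[j\leq i-1]$, and so the Yamanouchi inequalities $T_j\geq T_{j+1}$ translate verbatim into $S_j\geq S_{j+1}$ for every $j\neq i-1$. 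The forward direction follows at once because $S_{i-1}\geq S_i$ is stronger than the weakened edge inequality $S_{i-1}+d\geq S_i$ that $T$ requires.

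The main obstacle is the reverse direction at the edge case $j=i-1$, where $T$-Yamanouchi at intra-$X_i$ prefixes gives only $S_{i-1}+d\geq S_i$. Two structural observations close the gap. First, semistandardness of $T$ in row $n+i$ forces every entry of row $n+i$ of $S$ to be $\geq i$: the leftmost such entry abuts the frozen $i$ in column $d$, and row-weak-increase propagates this bound rightward. Hence $S_{i-1}$ is constant throughout $X_i$. Second, $T$-Yamanouchi applied at the prefix ending just after $Y_i$ (where now both $T_{i-1}$ and $T_i$ receive the full $+d$ contribution from $Y_1,\dotsc,Y_i$) yields the strong inequality $S_{i-1}\geq S_i$ at that endpoint. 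Since $S_i$ is non-decreasing while $S_{i-1}$ is constant through $X_i$, one concludes $S_{i-1}\geq S_i$ at every intra-$X_i$ prefix, completing the Yamanouchi verification for $S$ and hence the bijection.
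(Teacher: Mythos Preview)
Your proof is correct. The paper itself does not actually prove this lemma: it observes that the rectangle is forced to be frozen and then asserts that removing it yields an LR tableau, adding only the parenthetical remark ``this may be proved by an induction on $d$, which we omit.'' So there is no detailed argument in the paper to compare against.

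Your approach is direct rather than inductive. The paper's hinted induction on $d$ would reduce to the case $d=1$ (adding or removing a single frozen column) and iterate; your argument instead tracks the full reading word $W_1\dotsb W_n X_1 Y_1\dotsb X_m Y_m$ and handles all $d$ simultaneously. The key step you isolate---that for the backward direction the edge inequality $S_{i-1}\geq S_i$ inside $X_i$ follows from (a) constancy of $S_{i-1}$ across $X_i$ (since every entry of $X_i$ is $\geq i$, forced by the frozen $i$ to its left) and (b) the Yamanouchi inequality for $T$ at the end of $Y_i$, where the $+d$ contributions cancel---is exactly the content one would need to make the inductive step work as well, so the two routes are close in spirit. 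Your direct version has the advantage of making the role of the hypotheses $m\geq\ell(\mu)$ and $n\geq\ell(\nu)$ explicit. One small omission: when you say the content has exactly $m$ parts ``each $\geq d\geq 1$,'' you are silently assuming $d\geq 1$; this is harmless since for $d=0$ the bijection is the identity, but it is worth a one-line remark.
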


Let $\umu:=(\mu^1,\dotsc,\mu^r)$ be a sequence of partitions and $\lambda$ a partition with
$|\mu^1|+\dotsb+|\mu^r|=|\lambda|$.
Let $\defcolor{\ua}:=(a_1,\dotsc,a_r)$ be a sequence of integers with $a_i\geq\ell(\mu^i)$.
Set $a:=|a_1|+\dotsb+|a_r|$.

\begin{corollary}\label{C:BoxesOff}
  For any $d\geq 0$, 
  the coefficient $c^\lambda_{\umu}$ of $S_\lambda$ in the product $\prod_i S_{\mu^i}$ is equal to the coefficient of
  $S_{\Box_{a,d}+\lambda}$ in the product $\prod_i S_{\Box_{a_i,d}+\mu^i}$.  
\end{corollary}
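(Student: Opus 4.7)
The plan is to have both sides count generalized Littlewood-Richardson tableaux via Proposition~\ref{P:GLRTableaux}, and then exhibit a shape-preserving bijection between the two sets. Let $b_i:=a_1+\dotsb+a_i$, so $a=b_r$. On the left, we count chains $\emptyset=\lambda^0\subseteq\dotsb\subseteq\lambda^r=\lambda$ together with LR tableaux $T_i$ on $\lambda^i/\lambda^{i-1}$ of content $\mu^i$; on the right, chains $\emptyset=\kappa^0\subseteq\dotsb\subseteq\kappa^r=\Box_{a,d}+\lambda$ with LR tableaux $T'_i$ on $\kappa^i/\kappa^{i-1}$ of content $\Box_{a_i,d}+\mu^i$.

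For the forward direction I would set $\kappa^i:=\Box_{b_i,d}+\lambda^i$. Looking down the first column of $T_j$, semistandardness forces $\ell(\lambda^j)-\ell(\lambda^{j-1})\leq\ell(\mu^j)\leq a_j$, so $\ell(\lambda^i)\leq b_i$ and each $\kappa^i$ is a well-defined partition. A direct check shows that the skew shape $\kappa^i/\kappa^{i-1}$ coincides with the shape $\Box_{a_i,d}+\lambda^i/\lambda^{i-1}$ defined before Lemma~\ref{L:frozen} (taking $l=b_i$, $m=a_i$, $n=b_{i-1}$). Applying Lemma~\ref{L:frozen} to each $T_i$ then promotes it to an LR tableau of the required content $\Box_{a_i,d}+\mu^i$.

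The reverse direction is the substantive part: given a chain $(\kappa^i)$ on the right, I must show that $\kappa^i=\Box_{b_i,d}+\lambda^i$ for some partition $\lambda^i$. A direct box count gives $|\kappa^i|=db_i+\sum_{j\leq i}|\mu^j|$, and since $S_{\kappa^i}$ occurs in $\prod_{j\leq i}S_{\Box_{a_j,d}+\mu^j}$, Lemma~\ref{L:KeyStep} yields $|\kappa^i|_{b_i}\geq db_i+\sum_{j\leq i}|\mu^j|=|\kappa^i|$, forcing $\ell(\kappa^i)\leq b_i$.

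The main obstacle is the complementary containment $\Box_{b_i,d}\subseteq\kappa^i$, which I would prove by \emph{downward} induction on $i$. The base case $i=r$ is immediate from $\kappa^r=\Box_{a,d}+\lambda$. For the inductive step, semistandardness of $T'_{i+1}$ (whose entries lie in $\{1,\dotsc,a_{i+1}\}$) bounds each column length in the skew shape by $(\kappa^{i+1})'_c-(\kappa^i)'_c\leq a_{i+1}$. Combined with the inductive hypothesis $(\kappa^{i+1})'_c=b_{i+1}$ for $c\leq d$, this yields $(\kappa^i)'_c\geq b_i$, and the length bound $(\kappa^i)'_c\leq b_i$ then forces equality. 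Once $\kappa^i=\Box_{b_i,d}+\lambda^i$ is in hand, the skew shape $\kappa^i/\kappa^{i-1}$ is again of the form $\Box_{a_i,d}+\lambda^i/\lambda^{i-1}$, and Lemma~\ref{L:frozen} applied in reverse strips the frozen rectangles from the $T'_i$ to produce the required $T_i$. The two maps are clearly mutually inverse, completing the bijection and hence the corollary.
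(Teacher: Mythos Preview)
Your proof is correct and follows the same strategy as the paper: both invoke Proposition~\ref{P:GLRTableaux} and extend the single-step bijection of Lemma~\ref{L:frozen} to chains of generalized Littlewood--Richardson tableaux. Your version is in fact more thorough---the paper simply asserts that the bijection~\eqref{eq:BijGLRTableaux} holds, whereas you supply the nontrivial verification (via Lemma~\ref{L:KeyStep} for the length bound and the downward induction for the containment $\Box_{b_i,d}\subseteq\kappa^i$) that every intermediate shape $\kappa^i$ on the right-hand side must be of the form $\Box_{b_i,d}+\lambda^i$.
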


\begin{proof}
  Let \defcolor{$\Box_{\ua,d}+\umu$} be the sequence of partitions whose $i$th element is $\Box_{a_i,d}+\mu^i$.
  The bijection of Lemma~\ref{L:frozen} of adding or removing a frozen rectangle from Littlewood-Richardson tableaux extends
  to a bijection
  \begin{equation}\label{eq:BijGLRTableaux}
  \left\{\begin{minipage}[c]{1.9in}
      \begin{center}
          Generalized Littlewood- Richardson tableaux of shape $\lambda$ and content $\umu$.
     \end{center}
     \end{minipage}\right\}
     \ \longleftrightarrow\ 
  \left\{\begin{minipage}[c]{2.5in}
      \begin{center}
        Generalized Littlewood-Richardson tableaux of shape $\Box_{a,d}+\lambda$ and content $\Box_{\ua,d}+\umu$.
      \end{center}
     \end{minipage}\right\}\,.
 \end{equation}
  By Proposition~\ref{P:GLRTableaux}, these sets of tableaux have cardinality the two coefficients in the statement
  of the corollary, which completes the proof.
\end{proof}

\begin{example}
  Let us consider Corollary~\ref{C:BoxesOff} in the case when $|\lambda|=3$, $\umu=(\I,\I,\I)$, and $d=2$.
  Using the Pieri formula, we have
  \begin{equation}\label{Eq:degreeThree}
    S_{\sI}\cdot S_{\sI}\cdot S_{\sI}\ =\
    S_{\sncIII}\ +\ 2S_{\sTI}\ +\ S_{\sncTh}\,,
  \end{equation}
  and the generalized Littlewood-Richardson tableaux are the tableaux of content $(1,1,1)$,
  \[
    \begin{picture}(41,16)(-3.5,-15)
      \put(-3.5,-3){\includegraphics{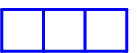}}
      \put( 0, 0){\small$1$} \put(12, 0){\small$2$}\put(24, 0){\small$3$}
    \end{picture}  
    \qquad
    \begin{picture}(41,16)(-3.5,-9)
      \put(-3.5,-3){\includegraphics{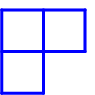}}
      \put( 0,12){\small$1$} \put(12,12){\small$2$}
      \put( 0, 0){\small$3$}
    \end{picture}  
    \qquad
    \begin{picture}(41,16)(-3.5,-9)
      \put(-3.5,-3){\includegraphics{pictures/21b}}
      \put( 0,12){\small$1$} \put(12,12){\small$3$}
      \put( 0, 0){\small$2$}
    \end{picture}  
    \qquad
    \begin{picture}(16,41)(-3.5,-3)
      \put(-3.5,-3){\includegraphics{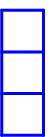}}
      \put(0,24){\small$1$}
      \put(0,12){\small$2$}
      \put(0, 0){\small$3$}
    \end{picture}  
   \]
   which shows~\eqref{Eq:degreeThree}.
   We let $\ua=(1,1,1)$, so that $\Box_{\ua,d}+\umu$ consists of three identical tableaux
   \raisebox{-2pt}{\,\includegraphics{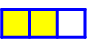}\,}.
   By the Pieri rule, a generalized Littlewood-Richardson tableau of shape $\kappa$ and content $\Box_{\ua,d}+\umu$
   is a tableau with content $(3,3,3)$.
   We show those of shape $\Box_{3,2}+\lambda$.
  \[
    \begin{picture}(53,40)(-3.5,-3)
      \put(-3.5,-3){\includegraphics{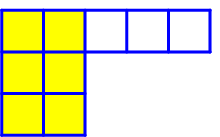}}
      \put(0,24){\small$1$} \put(12,24){\small$1$} \put(24,24){\small$1$} \put(36,24){\small$2$} \put(48,24){\small$3$}
      \put(0,12){\small$2$} \put(12,12){\small$2$} 
      \put(0, 0){\small$3$} \put(12, 0){\small$3$}    
    \end{picture}  
    \qquad
    \begin{picture}(53,40)(-3.5,-3)
      \put(-3.5,-3){\includegraphics{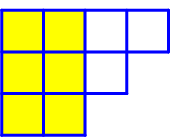}}
      \put(0,24){\small$1$}   \put(12,24){\small$1$}    \put(24,24){\small$1$}  \put(36,24){\small$2$}
      \put(0,12){\small$2$}   \put(12,12){\small$2$}    \put(24,12){\small$3$}
      \put(0, 0){\small$3$}   \put(12, 0){\small$3$}    
    \end{picture}  
    \qquad
    \begin{picture}(53,40)(-3.5,-3)
      \put(-3.5,-3){\includegraphics{pictures/432b}}
      \put(0,24){\small$1$}   \put(12,24){\small$1$}    \put(24,24){\small$1$}  \put(36,24){\small$3$}
      \put(0,12){\small$2$}   \put(12,12){\small$2$}    \put(24,12){\small$2$}
      \put(0, 0){\small$3$}   \put(12, 0){\small$3$}    
    \end{picture}  
    \qquad
    \begin{picture}(41,40)(-3.5,-3)
      \put(-3.5,-3){\includegraphics{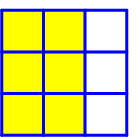}}
      \put(0,24){\small$1$}   \put(12,24){\small$1$}    \put(24,24){\small$1$}
      \put(0,12){\small$2$}   \put(12,12){\small$2$}    \put(24,12){\small$2$}
      \put(0, 0){\small$3$}   \put(12, 0){\small$3$}    \put(24, 0){\small$3$}
    \end{picture}  
  \]
  The `frozen rectangles' are shaded to better illustrate the bijection of Corollary~\ref{C:BoxesOff}.  
  \hfill$\diamond$
\end{example}

\section{Composition of Schubert problems}\label{S:composition}

We define Schubert problems on Grassmannians and use results from Section~\ref{S:SymmFn} to establish some formulas in the
cohomology ring of a Grassmannian.
We next define the composition of two Schubert problems, illustrating this notion with several examples.
We then formulate and prove our main theorem about the number of solutions to a composition of Schubert problems.
We use facts about the Grassmannian and its Schubert varieties and cohomology ring, as may be found
in~\cite{Fulton}.

\subsection{Schubert problems on Grassmannians}
For positive integers $a,b$, let \defcolor{$G(a,b)$} be the Grassmannian of $a$-dimensional linear subspaces of
$\CC^{a+b}$.
This is a smooth complex algebraic variety of dimension $ab$.
We will also need the alternative notation \defcolor{$\Gr(a,V)$} of $a$-dimensional linear subspaces of a complex vector
space $V$.
Then $\Gr(a,\CC^{a+b})=G(a,b)$.

Let $\lambda\subseteq\Box_{a,b}$ be a partition so that $\ell(\lambda)\leq a$ and $\lambda_1\leq b$.
A complete flag \defcolor{$\Fdot$} in $\CC^{a+b}$ is a sequence of linear subspaces
$\Fdot\colon F_1\subset F_2\subset\dotsb\subset F_{a+b}=\CC^{a+b}$ with $\dim F_i=i$.
Given a partition $\lambda\subseteq \Box_{a,b}$ and a flag $\Fdot$, we may define the \demph{Schubert variety} to be 
 \begin{equation}\label{Eq:SchubertVariety}
  \defcolor{\Omega_\lambda\Fdot}\ :=\
  \{ H\in G(a,b)\mid \dim H\cap F_{b+i-\lambda_i}\geq i\ \mbox{ for }i=1,\dotsc,a\}\,.
 \end{equation}
This has codimension $|\lambda|$ in $G(a,b)$.

A \demph{Schubert problem} on $G(a,b)$ is a list
$\defcolor{\ulambda}=(\lambda^1,\dotsc,\lambda^r)$ of partitions with $\lambda^i\subseteq\Box_{a,b}$ such that
$\defcolor{|\ulambda|}:=|\lambda^1|+\dotsb+|\lambda^r|=ab$.
We will call the partitions $\lambda^i$ the \demph{conditions} of the Schubert problem $\ulambda$.
An \demph{instance} of $\ulambda$ is determined by a list of flags $\defcolor{\calFdot}:=(\Fdot^1,\dotsc,\Fdot^r)$.
It consists of the points in the intersection, 
\begin{equation}\label{Eq:SchubertIntersection}
  \defcolor{\Omega_{\ulambda}\calFdot}\ :=\ 
  \Omega_{\lambda^1}\Fdot^1\,\cap\,
  \Omega_{\lambda^2}\Fdot^2\,\cap\,\dotsb\,\cap\,
  \Omega_{\lambda^r}\Fdot^r\,.
\end{equation}
When $\calFdot$ is general, this intersection is transverse~\cite{Kleiman}.
As $\ulambda$ is a Schubert problem, $\Omega_{\ulambda}\calFdot$ is zero-dimensional and therefore consists of finitely
many points. 
This number \defcolor{$\delta(\ulambda)$} of points does not depend upon the choice of flags.
It is the coefficient \defcolor{$c^{\Box_{a,b}}_{\ulambda}$} of the Schur function $S_{\Box_{a,b}}$ in the product
$S_{\lambda^1}\dotsb S_{\lambda^r}$.
A Schubert problem $\ulambda$ is \demph{trivial} if $\delta(\ulambda)\leq 1$.

We express this in the cohomology ring of the Grassmannian $G(a,b)$.
Given a partition $\lambda\subseteq\Box_{a,b}$, the cohomology class associated to a Schubert variety $\Omega_\lambda\Fdot$ is
the  \demph{Schubert class} \defcolor{$\sigma_\lambda$}.
The class associated to an intersection of Schubert varieties given by general flags is the product of the associated
Schubert classes, and \defcolor{$\sigma_{\Box_{a,b}}$} is the class of a point.
The map defined on the Schur basis by 
\[
  S_\lambda\ \longmapsto
  \left\{ \begin{array}{rcl} \sigma_\lambda&\ &\mbox{if } \lambda\subseteq \Box_{a,b}\\
            0             && \mbox{otherwise} \end{array}\right.
\]
is a homomorphism from $\Lambda$ onto this cohomology ring.
Thus we may evaluate a product in cohomology by applying this map to the corresponding product in $\Lambda$,
which removes all terms whose Schur function is indexed by a partition $\lambda$ that is not contained in the $a\times b$
rectangle, $\Box_{a,b}$.
Thus, for a Schubert problem $\ulambda=(\lambda^1,\dotsc,\lambda^r)$ and general flags $\calFdot$, the class of the
intersection $\Omega_{\ulambda}\calFdot$~\eqref{Eq:SchubertIntersection} is
\[
  \sigma_{\lambda^1} \sigma_{\lambda^2}\dotsb \sigma_{\lambda^r}
  \ =\ c^{\Box_{a,b}}_{\ulambda}\, \sigma_{\Box_{a,b}}
  \ =\ \delta(\ulambda)\, \sigma_{\Box_{a,b}}\,.
\]

The fundamental involution has a geometric counterpart.
The map sending an $a$-dimensional subspace $H$ of $V\simeq\CC^{a+b}$ to its annihilator $H^\perp$ in the dual space $V^*$
is an isomorphism $\Gr(a,V)\xrightarrow{\,\sim\,}\Gr(b,V^*)$.
Given a flag $\Fdot$ in $V$, its sequence of annihilators gives a flag $\Fdot^\perp$ in $V^*$ and for any partition
$\lambda\subseteq\Box_{a,b}$ the map $H\mapsto H^\perp$  sends $\Omega_\lambda\Fdot$ to $\Omega_{\lambda'}\Fdot^\perp$.
(Recall that $\lambda'$ is conjugate of $\lambda$.)
This induces an isomorphism on cohomology, sending the Schubert class $\sigma_\lambda$ for $G(a,b)$ to the class
$\sigma_{\lambda'}$ for $G(b,a)$.
Because of this and the fundamental involution on $\Lambda$, results we prove for a Schubert problem $\ulambda$ on one
Grassmannian $G(a,b)$ hold for its conjugate problem $\ulambda'$ on the dual Grassmannian $G(b,a)$.\smallskip

For  $\lambda\subseteq\Box_{a,b}$, let $\defcolor{\lambda^\vee}\subseteq\Box_{a,b}$ be the partition
whose $i$th part is $\lambda^\vee_i=b-\lambda_{a+1-i}$.
The skew diagram $\Box_{a,b}/\lambda^\vee$ is the rotation \defcolor{$\lambda^\circ$} of the diagram of $\lambda$ by
$180^\circ$.
When $a=5$, $b=6$, and $\lambda=(4,3,1,1)$, here are $\lambda$ and $\lambda^\vee$,
\[
  (4,3,1,1)\ =\ \raisebox{-15pt}{\includegraphics{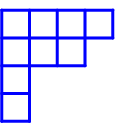}}\qquad\mbox{and}\qquad
  (4,3,1,1)^\vee\ =\ \raisebox{-19pt}{\includegraphics{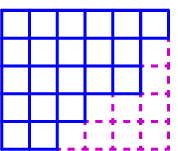}}\ =\ (6,5,5,3,2)\,.
\]

A key fact about the Schubert classes is Schubert's Duality Theorem~\cite[p.~149]{Fulton}, a version of which is the
following useful and well-known fact.

\begin{lemma}\label{L:zero}
  Let $\lambda,\mu\subseteq \Box_{a,b}$.
  Then $\sigma_\lambda\cdot\sigma_\mu\neq 0$ if and only if $\mu\subseteq\lambda^\vee$.
  Furthermore, $\sigma_\lambda\cdot\sigma_{\lambda^\vee}=\sigma_{\Box_{a,b}}$.
\end{lemma}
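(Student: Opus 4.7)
Plan. The plan is to first prove the ``top-degree'' Poincar\'e duality
\[
  \sigma_\rho\cdot\sigma_\sigma \;=\; \begin{cases}\sigma_{\Box_{a,b}}&\text{if }\sigma=\rho^\vee,\\ 0&\text{otherwise,}\end{cases}
  \qquad\text{for }\rho,\sigma\subseteq\Box_{a,b}\text{ with }|\rho|+|\sigma|=ab,
\]
and then to deduce the general \emph{if and only if} by a short associativity argument that invokes this fact twice.

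\textbf{Step 1 (top-degree duality).} When $|\rho|+|\sigma|=ab$, the only partition $\nu\subseteq\Box_{a,b}$ with $|\nu|=ab$ is $\Box_{a,b}$ itself, so $\sigma_\rho\sigma_\sigma = c^{\Box_{a,b}}_{\rho,\sigma}\,\sigma_{\Box_{a,b}}$; everything reduces to computing a Littlewood-Richardson coefficient, which by definition counts LR tableaux of shape $\Box_{a,b}/\rho$ and content $\sigma$. I would show such a tableau exists uniquely precisely when $\sigma=\rho^\vee$ by pinning down the only candidate: the filling that places $|\{j\le k:\rho_j<c\}|$ in the box at row~$k$, column~$c$. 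The lattice-word condition forces the rightmost entry in the top row to be~$1$, hence (by weak row increase) the whole top row to be $1$'s; then column-strictness forces the next row, and so on. A straightforward induction on $k$ (with an auxiliary descending induction on $c$ within each row, driven by the word condition) shows this is the only valid filling. Reading off its content gives exactly $\rho^\vee$, proving $c^{\Box_{a,b}}_{\rho,\rho^\vee}=1$ and $c^{\Box_{a,b}}_{\rho,\sigma}=0$ whenever $\sigma\neq\rho^\vee$.

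\textbf{Step 2 (the iff).} Given Step~1, for any $\nu\subseteq\Box_{a,b}$ with $|\nu|=|\lambda|+|\mu|$ the triple product $\sigma_\lambda\sigma_\mu\sigma_{\nu^\vee}$ lies in top degree and can be evaluated in two orders. Expanding $\sigma_\lambda\sigma_\mu$ in the Schur basis and applying Step~1 to each $\sigma_\rho\sigma_{\nu^\vee}$ yields $c^\nu_{\lambda,\mu}\,\sigma_{\Box_{a,b}}$; expanding $\sigma_\mu\sigma_{\nu^\vee}$ and applying Step~1 to each $\sigma_\lambda\sigma_\tau$ yields $c^{\lambda^\vee}_{\mu,\nu^\vee}\,\sigma_{\Box_{a,b}}$. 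Hence
\[
  c^\nu_{\lambda,\mu}\;=\;c^{\lambda^\vee}_{\mu,\nu^\vee}.
\]
Setting $\kappa:=\nu^\vee$, the nonvanishing $\sigma_\lambda\sigma_\mu\neq 0$ (i.e.\ some $\nu\subseteq\Box_{a,b}$ gives $c^\nu_{\lambda,\mu}>0$) is equivalent to the existence of some $\kappa\subseteq\Box_{a,b}$ with $c^{\lambda^\vee}_{\mu,\kappa}>0$, i.e., to the skew Schur function $s_{\lambda^\vee/\mu}=\sum_\kappa c^{\lambda^\vee}_{\mu,\kappa}\,s_\kappa$ being nonzero. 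This holds if and only if $\lambda^\vee/\mu$ is a valid skew shape, i.e., $\mu\subseteq\lambda^\vee$. (Any $\kappa$ occurring in the skew expansion automatically satisfies $\kappa\subseteq\lambda^\vee\subseteq\Box_{a,b}$, so the containment constraint on $\kappa$ is free.)

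The main obstacle is the combinatorial uniqueness in Step~1---verifying that the canonical filling is forced and that no other content can arise. Once that is in hand, Step~2 is a purely formal associativity computation using Step~1 twice.
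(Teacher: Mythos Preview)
Your proof is correct. Note, however, that the paper does not actually prove this lemma: it is stated as a ``useful and well-known fact,'' a version of Schubert's Duality Theorem, with a citation to Fulton's \emph{Young Tableaux} (p.~149). So there is no argument in the paper to compare against.

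What you have written is a self-contained combinatorial proof. Step~1 is the standard argument that the rectangular skew shape $\Box_{a,b}/\rho$ admits a unique Littlewood--Richardson filling (the ``canonical'' one, with entry equal to column-depth), whose content is exactly $\rho^\vee$; your sketch of the forcing argument is right, though in a finished write-up the induction on the reading order deserves a couple more lines. Step~2 is clean: the associativity identity $c^\nu_{\lambda,\mu}=c^{\lambda^\vee}_{\mu,\nu^\vee}$ is precisely what the paper records immediately afterward as Corollary~\ref{C:duality}, and your reduction to the nonvanishing of the skew Schur function $s_{\lambda^\vee/\mu}$, together with the observation that any $\kappa$ in its support already lies in $\Box_{a,b}$, closes the argument correctly.

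In short, the paper treats this as background and outsources it; you have supplied the details. Either approach is appropriate---yours is more work but keeps the exposition self-contained.
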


We give a consequence of Lemma~\ref{L:zero}, which is also well-known.

\begin{corollary}\label{C:duality}
  The coefficient of $\sigma_\lambda$ in a cohomology class $\sigma$ equals the coefficient of $\sigma_{\Box_{a,b}}$ in
  the product $\sigma\cdot\sigma_{\lambda^\vee}$.
  In particular, $c^{\lambda^\vee}_{\mu,\nu}$ is the coefficient of $\sigma_{\Box_{a,b}}$ in
  the product $\sigma_\mu\cdot\sigma_\nu\cdot\sigma_{\lambda}$.
\end{corollary}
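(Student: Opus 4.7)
The plan is to expand $\sigma$ in the Schubert basis and reduce to the previous lemma. Write
\[
  \sigma\ =\ \sum_{\mu\subseteq\Box_{a,b}} c_\mu\,\sigma_\mu\,,
\]
so that our goal is to show that the coefficient of $\sigma_{\Box_{a,b}}$ in $\sigma\cdot\sigma_{\lambda^\vee}$ is $c_\lambda$. Multiplying through by $\sigma_{\lambda^\vee}$, only those $\mu$ with $\sigma_\mu\cdot\sigma_{\lambda^\vee}\neq 0$ can contribute.

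The key step is to apply Lemma~\ref{L:zero} twice. First, $\sigma_\mu\cdot\sigma_{\lambda^\vee}\neq 0$ forces $\lambda^\vee\subseteq\mu^\vee$, equivalently $\mu\subseteq\lambda$, so $|\mu|\leq|\lambda|$. Second, since $\sigma_\mu\cdot\sigma_{\lambda^\vee}$ lives in cohomological degree $|\mu|+|\lambda^\vee|=|\mu|+ab-|\lambda|$, for this product to contribute to the coefficient of $\sigma_{\Box_{a,b}}$ (of degree $ab$) we must have $|\mu|=|\lambda|$. Combining with $\mu\subseteq\lambda$ forces $\mu=\lambda$, and then Lemma~\ref{L:zero} gives $\sigma_\lambda\cdot\sigma_{\lambda^\vee}=\sigma_{\Box_{a,b}}$. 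Hence the only surviving term is $c_\lambda\,\sigma_{\Box_{a,b}}$, proving the first assertion.

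For the second assertion, apply the first assertion with $\sigma=\sigma_\mu\cdot\sigma_\nu$ and with the role of $\lambda$ played by $\lambda^\vee$. By definition the coefficient of $\sigma_{\lambda^\vee}$ in $\sigma_\mu\cdot\sigma_\nu$ is $c^{\lambda^\vee}_{\mu,\nu}$, and the first assertion identifies this with the coefficient of $\sigma_{\Box_{a,b}}$ in $\sigma_\mu\cdot\sigma_\nu\cdot\sigma_{(\lambda^\vee)^\vee}=\sigma_\mu\cdot\sigma_\nu\cdot\sigma_\lambda$, using that $(\lambda^\vee)^\vee=\lambda$.

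There isn't really a hard step here; the whole content is that Lemma~\ref{L:zero} gives a perfect pairing on Schubert classes, so the only subtlety to watch is the degree/containment dichotomy in the first step, which simultaneously forces $\mu=\lambda$ rather than merely $\mu\subseteq\lambda$.
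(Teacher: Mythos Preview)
Your proof is correct and is exactly the standard argument the paper has in mind; the paper does not actually write out a proof of this corollary, merely stating it as a well-known consequence of Lemma~\ref{L:zero}. One minor simplification: rather than passing through $\lambda^\vee\subseteq\mu^\vee$, you can apply Lemma~\ref{L:zero} directly to get $\sigma_\mu\cdot\sigma_{\lambda^\vee}\neq 0$ iff $\mu\subseteq(\lambda^\vee)^\vee=\lambda$.
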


We deduce a lemma which constrains the shape of partitions indexing certain nonzero products.
This will be a key ingredient in our main theorem.
 
\begin{lemma}\label{L:FBigHook} 
 Suppose that $\kappa,\lambda\subseteq\Box_{a+c,b+d}$ are partitions with $\ell(\kappa)\leq a$ and $\lambda_1\leq b$.
 If $|\kappa|+|\lambda| > ad+ab+cb$, then $\sigma_\kappa\cdot\sigma_\lambda=0$ in the cohomology ring of $G(a{+}c,b{+}d)$.
 If $|\kappa|+|\lambda| = ad+ab+cb$ and  $\sigma_\kappa\cdot\sigma_\lambda\neq 0$, then
 $\sigma_\kappa\cdot\sigma_\lambda=\sigma_{\Box_{c,d}^\vee}$ and there is  partition $\alpha\subseteq\Box_{a,b}$ such that
 $\kappa=\Box_{a,d}+\alpha$ and $\lambda=(\Box_{c,b},\alpha^\vee)$. 
\end{lemma}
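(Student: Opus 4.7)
My plan is to establish the inequality via Schubert Duality, pin down the equality case by saturation, and finally compute the relevant coefficient using Corollary~\ref{C:duality} and Lemma~\ref{Lemma:LiOne}.

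For the first claim, I invoke Schubert Duality (Lemma~\ref{L:zero}): if $\sigma_\kappa\cdot\sigma_\lambda\neq 0$ then $\lambda\subseteq\kappa^\vee$ in $\Box_{a+c,b+d}$. Since $\ell(\kappa)\leq a$, I compute $(\kappa^\vee)_i=b+d$ for $i\leq c$ and $(\kappa^\vee)_{c+j}=(b+d)-\kappa_{a+1-j}$ for $j=1,\dots,a$. Combining $\lambda\subseteq\kappa^\vee$ with $\lambda_1\leq b$ gives the partwise bound $\lambda_i\leq\min\{b,(\kappa^\vee)_i\}$; summing and using $\min\{b,x\}\leq x$ yields $|\lambda|\leq cb+ab+ad-|\kappa|$, so $|\kappa|+|\lambda|\leq ad+ab+cb$, proving the first statement by contraposition.

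In the equality case, every inequality above is an equation, so $\lambda_i=b$ for $i\leq c$ and $\lambda_{c+j}=(b+d)-\kappa_{a+1-j}\leq b$. The latter forces $\kappa_{a+1-j}\geq d$ for all $j=1,\dots,a$, so $\kappa=\Box_{a,d}+\alpha$ for a partition $\alpha\subseteq\Box_{a,b}$ (namely $\alpha_i=\kappa_i-d$); then $\lambda_{c+j}=b-\alpha_{a+1-j}=\alpha^\vee_j$, whence $\lambda=(\Box_{c,b},\alpha^\vee)$. To see that only $\nu=V:=\Box_{c,d}^\vee=((b+d)^a,b^c)$ can appear in $\sigma_\kappa\cdot\sigma_\lambda$, I observe that any $\nu$ with $c^\nu_{\kappa,\lambda}\neq 0$ gives, via the conjugate Littlewood--Richardson rule, a skew LR tableau of shape $\nu'/\kappa'$ with content $\lambda'$; since $\ell(\lambda')=\lambda_1=b$, each strictly-increasing column has $\nu_j-\kappa_j\leq b$ entries. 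For $j>a$ (where $\kappa_j=0$) this yields $\nu_j\leq b$, which together with $\nu\subseteq\Box_{a+c,b+d}$ and $|\nu|=ab+ad+cb$ saturates the bound $|\nu|\leq a(b+d)+cb$ and forces $\nu=V$.

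To finish, I compute the coefficient $c^V_{\kappa,\lambda}=1$ as follows. By Corollary~\ref{C:duality} with $V^\vee=\Box_{c,d}$, $c^V_{\kappa,\lambda}=[(\sigma_\kappa\cdot\sigma_{\Box_{c,d}})\cdot\sigma_\lambda:\sigma_{\Box_{a+c,b+d}}]$, and Schubert Duality shows only $\sigma_{\lambda^\vee}$ in the first factor contributes, so this equals $[\sigma_\kappa\cdot\sigma_{\Box_{c,d}}:\sigma_{\lambda^\vee}]$. An index-by-index calculation shows $\lambda^\vee=(\kappa,\Box_{c,d})$ in $\Box_{a+c,b+d}$ (valid since $\kappa_a=d+\alpha_a\geq d$). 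Then Lemma~\ref{Lemma:LiOne}, applied with the lemma's $\mu$ equal to $\alpha$ and the lemma's $\alpha$ equal to $\Box_{c,d}$ (both hypotheses $a\geq\ell(\alpha)$ and $d\geq(\Box_{c,d})_1=d$ hold), identifies $S_{(\kappa,\Box_{c,d})}$ as the unique minimum-dominance term in $S_\kappa\cdot S_{\Box_{c,d}}$, with coefficient $1$. The chief care-point is the identification $\lambda^\vee=(\kappa,\Box_{c,d})$ when $\alpha_1=b$ (so $\lambda$ has length strictly less than $a+c$); there the trailing parts of $\alpha^\vee$ vanish while the first parts of $\lambda^\vee$ absorb the extra contributions, giving the same formula.
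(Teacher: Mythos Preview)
Your proof is correct. The first two paragraphs—the inequality via Schubert duality and the saturation argument forcing $\kappa=\Box_{a,d}+\alpha$ and $\lambda=(\Box_{c,b},\alpha^\vee)$—are essentially the paper's argument recast in coordinates (the paper phrases it as disjointness of $\kappa$ and the rotated diagram $\lambda^\circ$ inside $\Box_{a+c,b+d}$, but that is exactly the partwise inequality you sum).

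Where you diverge is in identifying the unique term $\sigma_{\Box_{c,d}^\vee}$ with coefficient $1$. The paper takes a putative $\sigma_\rho$ in the product, sets $\nu=\rho^\vee$ (so $|\nu|=cd$), and uses the dominance bound $(\kappa,\nu)\trianglelefteq\lambda^\vee$ from Proposition~\ref{P:LR_Dominance}; comparing parts one by one forces $\nu=\Box_{c,d}$, and the coefficient is then $c^{(\kappa,\nu)}_{\kappa,\nu}=1$ by the same proposition. You instead pass to the conjugate Littlewood--Richardson rule and bound column lengths of a tableau of shape $\nu'/\kappa'$ and content $\lambda'$ to get $\nu_j-\kappa_j\leq\lambda_1\leq b$, which together with $\ell(\kappa)\leq a$ and $|\nu|=ad+ab+cb$ saturates to $\nu=((b{+}d)^a,b^c)$; then you verify $\lambda^\vee=(\kappa,\Box_{c,d})$ and read off the coefficient from Lemma~\ref{Lemma:LiOne}. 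Both routes are short: yours avoids the part-by-part induction the paper uses, while the paper's avoids invoking the tableau description of $c^\nu_{\kappa,\lambda}$ directly and stays within the dominance machinery already set up.
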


\begin{proof}
  Let $\kappa,\lambda\leq\Box_{a+c,b+d}$ be partitions with  $\ell(\kappa)\leq a$ and $\lambda_1\leq b$.
  By Lemma~\ref{L:zero}, $\sigma_\kappa\cdot\sigma_\lambda\neq 0$ if and only if $\kappa\subseteq\lambda^\vee$.
  Recall that $\Box_{a+c,b+d}/\lambda^\vee=\lambda^\circ$, which is the Young diagram of $\lambda$ rotated by $180^\circ$
  and placed in the southeast corner of $\Box_{a+c,b+d}$.
  Thus $\sigma_\kappa\cdot\sigma_\lambda\neq 0$ if and only if the diagrams of $\kappa$ and $\lambda^\circ$ are disjoint. 

  As $\ell(\kappa)\leq a$, the Young diagram of $\kappa$ lies in the first $a$ rows of $\Box_{a+c,b+d}$.
  Similarly, $\lambda^\circ$ lies in the last $b$ columns of  $\Box_{a+c,b+d}$, as $\lambda_1\leq b$.
  Since there are $ad+ab+cb$ boxes in these rows and columns, if
 \[
    |\kappa|+|\lambda|\ >\ ad+ab+cb\,,
 \]
  then the Young diagram of $\kappa$ must contain a box of $\lambda^\circ$ and $\sigma_\kappa\cdot\sigma_\lambda=0$.

  Suppose that $|\kappa|+|\lambda|=ad+ab+cb$ and $\sigma_\kappa\cdot\sigma_\lambda\neq 0$.
  Then $\kappa\subseteq\lambda^\vee$, and the disjoint union $\kappa\cup\lambda^\circ$ covers the first $a$ rows and last $b$
  columns of  $\Box_{a+c,b+d}$.
  In particular, this implies that $\kappa$ contains the northwest rectangle $\Box_{a,d}$ and $\lambda^\circ$ contains the
  southeast rectangle $\Box_{c,b}$.
  Thus there are partitions $\alpha,\beta\subseteq\Box_{a,b}$ such that $\kappa=\Box_{a,d}+\alpha$ and
  $\lambda=(\Box_{c,b},\beta)$. 
  As $\kappa\cup\lambda^\circ$ covers the northeast rectangle $\Box_{a,b}$, we see that $\beta=\alpha^\vee$.
  \[
    \begin{picture}(341,62)
      \put(10,0){\includegraphics{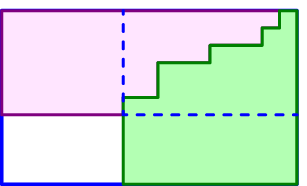}}
        \put( 0, 8){$c$}         \put( 0,33){$a$}
        \put(25,53){$d$}         \put(68,53){$b$}
        \put(17,7){$\Box_{c,d}$}  \put(65,7){$\lambda^\circ$}
        \put(22,33){$\kappa$}      
        
        \put(160,20){\includegraphics{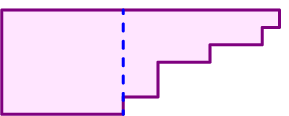}}
        \put(141,33){$\kappa:$}
        \put(169,33){$\Box_{a,d}$}   \put(205,40){$\alpha$}  
        
        \put(290,0){\includegraphics{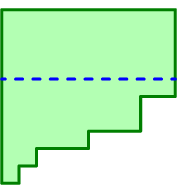}}
        \put(271,26){$\lambda:$}
        \put(300,15){$\alpha^\vee$}     \put(310,37){$\Box_{c,b}$}   

   \end{picture}
 \]

 Let $\rho\subseteq\Box_{a+c,b+d}$ be a partition such that $\sigma_\rho$ appears in the 
 product $\sigma_\kappa\cdot\sigma_\lambda$.
 Let $\nu:=\rho^\vee$.
 Then
 \[
   |\nu|\ =\ (a{+}c)(b{+}d)-|\kappa|-|\lambda|\ =\
   (a{+}c)(b{+}d)-ab-ad-cb\ =\ cd\,,
 \]
 and $\kappa,\nu,\lambda$ form a Schubert problem with
 $\sigma_\kappa\cdot\sigma_\nu\cdot\sigma_\lambda\neq 0$.
 Thus $\sigma_\kappa\cdot\sigma_\nu\neq 0$.
 By Proposition~\ref{P:LR_Dominance}, $\sigma_\kappa\cdot\sigma_\nu$ is a sum of terms $c^\gamma_{\kappa,\nu}\sigma_\gamma$ with
 $(\kappa,\nu)\trianglelefteq\gamma$.
 By Corollary~\ref{C:duality},
 $c^{\lambda^\vee}_{\kappa,\nu}\sigma_{\Box_{a+c,b+d}}= \sigma_\kappa\cdot\sigma_\nu\cdot\sigma_\lambda$.
 As the product is nonzero, $\sigma_{\lambda^\vee}$ occurs in $\sigma_\kappa\cdot\sigma_\nu$ and so we have
 $(\kappa,\nu)\trianglelefteq\lambda^\vee$, by Proposition~\ref{P:LR_Dominance}. 
 Since $\kappa_i=\lambda^\vee_i$ for $i\leq a$, an induction on $i$ shows that $(\kappa,\nu)_i=\kappa_i$ for  $i\leq a$.
 As $\lambda^\vee_{a+j}=d$ for $j=1,\dotsc,c$, another induction shows that $\nu_j=d$ for $j=1,\dotsc,c$.
 Consequently, $\nu=\Box_{c,d}$ and  $(\kappa,\nu)=\lambda^\vee$.
 By Proposition~\ref{P:LR_Dominance}, $c^{\lambda^\vee}_{\kappa,\nu}=c^{(\kappa,\nu)}_{\kappa,\nu}=1$ and thus
 $\sigma_\kappa\cdot\sigma_\nu\cdot\sigma_\lambda=\sigma_{\Box_{a+c,b+d}}$, by Corollary~\ref{C:duality}.

 Thus we have shown that if $|\nu|=cd$, so that $\lambda,\kappa,\nu$ is a Schubert problem, then
 \[
   \sigma_\kappa\cdot\sigma_\nu\cdot\sigma_\lambda \ =\
   \left\{\begin{array}{rcl} 1 &\ & \mbox{ if }\nu=\Box_{c,d}\\
                             0 &  & \mbox{ otherwise} \end{array}\right.\ .
 \]
 Thus by Corollary~\ref{C:duality}, $\sigma_\kappa\cdot\sigma_\lambda=\sigma_{\Box_{c,d}^\vee}$. 
\end{proof}

For any partition $\lambda$ and positive integer $b$, note that $|\lambda'|_b$ is the number of boxes in the first $b$
columns of the Young diagram of $\lambda$.

\begin{corollary}\label{C:moreZeroes}
  Let $\rho,\tau\subseteq\Box_{a+c,b+d}$ be partitions with $|\rho|_a+|\tau'|_b> ad+cb+ab$.
  Then $\sigma_\rho\cdot\sigma_\tau=0$ in the cohomology ring of $G(a{+}c,b{+}d)$.
\end{corollary}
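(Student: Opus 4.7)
The plan is to reduce to Lemma~\ref{L:FBigHook} by truncating $\rho$ and $\tau$ to partitions that satisfy its hypotheses. First, I would set $\kappa:=(\rho_1,\dotsc,\rho_a)$, so that $\ell(\kappa)\leq a$ and $|\kappa|=|\rho|_a$; and let $\lambda$ be the partition obtained by restricting $\tau$ to its first $b$ columns (equivalently, $\lambda'=(\tau'_1,\dotsc,\tau'_b)$), so that $\lambda_1\leq b$ and $|\lambda|=|\tau'|_b$. Both $\kappa$ and $\lambda$ lie in $\Box_{a+c,b+d}$ by construction.

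The hypothesis $|\rho|_a+|\tau'|_b>ad+ab+cb$ then reads $|\kappa|+|\lambda|>ad+ab+cb$, so Lemma~\ref{L:FBigHook} applies and yields $\sigma_\kappa\cdot\sigma_\lambda=0$ in the cohomology of $G(a{+}c,b{+}d)$. By Lemma~\ref{L:zero}, this vanishing is equivalent to the statement $\lambda\not\subseteq\kappa^\vee$, where the complement $\,^\vee$ is taken inside $\Box_{a+c,b+d}$.

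The remaining step is to transfer the vanishing from $\sigma_\kappa\cdot\sigma_\lambda$ to $\sigma_\rho\cdot\sigma_\tau$. The inclusions $\kappa\subseteq\rho$ and $\lambda\subseteq\tau$ are immediate from the definitions. Because the complement $\mu\mapsto\mu^\vee$ inside $\Box_{a+c,b+d}$ is order-reversing, the inclusion $\kappa\subseteq\rho$ gives $\rho^\vee\subseteq\kappa^\vee$. Suppose for contradiction that $\sigma_\rho\cdot\sigma_\tau\neq 0$; then Lemma~\ref{L:zero} forces $\tau\subseteq\rho^\vee$, and chaining the inclusions gives $\lambda\subseteq\tau\subseteq\rho^\vee\subseteq\kappa^\vee$, contradicting $\lambda\not\subseteq\kappa^\vee$. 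Hence $\sigma_\rho\cdot\sigma_\tau=0$, as claimed.

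The only mildly delicate point is the order-reversal of $\mu\mapsto\mu^\vee$, which follows instantly from the formula $\mu^\vee_i=(b+d)-\mu_{(a+c)+1-i}$, so I do not expect any real obstacle. The whole argument is essentially a soft transfer: the combinatorial geometry of overlapping Young diagrams established in Lemma~\ref{L:FBigHook} for $\kappa$ and $\lambda$ automatically persists for the larger partitions $\rho$ and $\tau$ that contain them.
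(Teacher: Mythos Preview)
Your proof is correct. The overall strategy matches the paper's: define the same truncations $\kappa=(\rho_1,\dotsc,\rho_a)$ and $\lambda$ (the first $b$ columns of $\tau$), then invoke Lemma~\ref{L:FBigHook} to get $\sigma_\kappa\cdot\sigma_\lambda=0$. The difference lies in the transfer step. The paper writes $\rho=(\kappa,\alpha)$ and $\tau=\lambda+\beta$, observes via Proposition~\ref{P:LR_Dominance} that $\sigma_\rho$ occurs in $\sigma_\kappa\cdot\sigma_\alpha$ and $\sigma_\tau$ in $\sigma_\lambda\cdot\sigma_\beta$, and then uses positivity to bound the coefficients in $\sigma_\rho\cdot\sigma_\tau$ by those in $(\sigma_\kappa\cdot\sigma_\lambda)\cdot\sigma_\alpha\cdot\sigma_\beta=0$. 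You instead translate the vanishing $\sigma_\kappa\cdot\sigma_\lambda=0$ into the containment failure $\lambda\not\subseteq\kappa^\vee$ via Lemma~\ref{L:zero}, and push this forward using the order-reversal of $\mu\mapsto\mu^\vee$ together with $\kappa\subseteq\rho$ and $\lambda\subseteq\tau$. Your route is a touch more elementary, avoiding any appeal to Littlewood--Richardson positivity; the paper's factorization argument, on the other hand, fits the pattern used elsewhere in the section (e.g.\ in the proof of Theorem~\ref{Th:product}) of bounding products by expanding into Schur-positive pieces.
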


\begin{proof}
 Let $\defcolor{\kappa}=(\rho_1,\dotsc,\rho_a)$ be the partition formed by the first $a$ parts of $\rho$ and
 $\alpha=(\rho_{a+1},\dotsc)$ be the partition formed by the remaining parts of $\rho$.
 Similarly, let $\defcolor{\lambda}$ be the partition formed from the first $b$ columns of the Young diagram of $\tau$ and
 $\beta$ the partition formed by the remaining columns of $\tau$.
 Then we have $\rho=(\kappa,\alpha)$ and $\tau=\lambda+\beta$.
 By Proposition~\ref{P:LR_Dominance}, $\sigma_\rho$ occurs in the product $\sigma_\kappa\cdot\sigma_\alpha$ and
 $\sigma_\tau$ occurs in the product $\sigma_\lambda\cdot\sigma_\beta$.

 Thus the coefficient of any Schubert class $\sigma_\gamma$ in the product $\sigma_\rho\cdot\sigma_\tau$ is at most the
 coefficient of  $\sigma_\gamma$ in the product
 $\sigma_\kappa\cdot\sigma_\alpha\cdot\sigma_\lambda\cdot\sigma_\beta=
 (\sigma_\kappa\cdot\sigma_\lambda)\cdot\sigma_\alpha\cdot\sigma_\beta$.
 As $\ell(\kappa)\leq a$ and $\lambda_1\leq b$ and  $|\kappa|+|\lambda| > ad+ab+cb$, Lemma~\ref{L:FBigHook}
 implies that 
 $\sigma_\kappa\cdot\sigma_\lambda=0$ in the cohomology ring of $G(a{+}c,b{+}d)$.
 This implies that $\sigma_\rho\cdot\sigma_\tau=0$.
\end{proof}

\subsection{Compositions of Schubert problems}\label{SS:composition}

A \demph{composable partition} of a Schubert problem $\ulambda$ on $G(a,b)$ is a partition
$\ulambda=(\umu,\unu)$ of the conditions of $\ulambda$, where 
$\umu=(\mu^1,\dotsc,\mu^r)$ and $\unu=(\nu^1,\dotsc,\nu^s)$, together with two sequences
$\defcolor{\ua}=(a_1,\dotsc,a_r)$ and $\defcolor{\ub}=(b_1,\dotsc,b_s)$ of nonnegative integers with
$a=|\ua|=a_1+\dotsb+a_r$ and $b=|\ub|=b_1+\dotsb+b_s$.
These data, $(\umu,\unu)$, $\ua$, and $\ub$, further satisfy
 \begin{equation}\label{Eq:ComposablePartition}
  a_i\ \geq\ \ell(\mu^i) \quad\mbox{for }i=1,\dotsc,r\qquad\mbox{and}\qquad
  b_j\ \geq\ \nu^j_1\quad\mbox{for }j=1,\dotsc,s\,.
 \end{equation}
We say that $\ulambda$ is \demph{composable} if it admits a composable partition.

\begin{example}\label{Ex:some_compositions}
  For example, $\ulambda=(\umu,\unu)$ with $\umu=((p),(q))$ and $\unu=(\I,\dotsc,\I)$ ($p{+}q$ occurrences of $\I$) is a
  composable Schubert problem on 
  $G(2,p{+}q)$ with $\ua=(1,1)$ and $\ub=(1,\dotsc,1)$   ($p{+}q$ occurrences of $1$).
  Also $\umu=(\T,\T,\I)$ and $\nu=(\II,\I,\I)$ with $\ua=\ub=(1,1,1)$ forms a composable partition of a Schubert problem on 
  $G(3,3)$,  as do $\umu=(\T,\T,\T)$ and $\nu=(\I,\I,\I)$, with the same $\ua$ and $\ub$. \hfill$\diamond$
\end{example}

\begin{remark}\label{R:expand}
  Not all Schubert problems are composable.
  For example, the Schubert problem $\ulambda=(\I,\I,\I,\I,\I,\I)$  on $G(2,3)$ is not composable as its number of
  conditions is $6$, which exceeds the sum $2+3$.
  However, it is geometrically equivalent to the Schubert problem $(\ThT,\I,\I,\I,\I,\I)$  on
  $G(2,5)$, which does have a composable partition, namely $\umu=(\ThT)$ and
  $\unu=(\I,\I,\I,\I,\I)$ with $\ua=(2)$ and $\ub=(1,1,1,1,1)$.  \hfill$\diamond$
\end{remark}

Geometrically equivalent means that every general instance of $(\ThT,\I,\I,\I,\I,\I)$ on $G(2,5)$ gives an 
instance of $(\I,\I,\I,\I,\I,\I)$  on $G(2,3)$ whose solutions correspond to solutions of the original problem, and every
general instance of the second problem occurs in this way.
This generalizes:
Every Schubert problem that is not composable is geometrically equivalent to a composable Schubert problem on a larger
Grassmannian.
A proof of this claim may be modeled on the construction of Remark~\ref{R:expand}.

\begin{definition}\label{Def:composition}
  Suppose that  $(\umu,\unu), \ua,\ub$ is a composable partition of a Schubert problem $\ulambda$ on $G(a,b)$,
  where  $\umu=(\mu^1,\dotsc,\mu^r)$ and $\unu=(\nu^1,\dotsc,\nu^s)$.
  Let $c,d$ be positive integers and let $\urho$ be any Schubert problem on $G(c,d)$.
  Let us partition the conditions of $\urho$ into three lists $\urho=(\ualpha,\ubeta,\ugamma)$, where
  $\ualpha=(\alpha^1,\dotsc,\alpha^r)$, $\ubeta=(\beta^1,\dotsc,\beta^s)$, and
  $\ugamma=(\gamma^1,\dotsc,\gamma^t)$.
  The \demph{composition} \defcolor{$\ulambda\circ\urho$} of these two Schubert problems is the list of partitions
  \[
    \bigl((\Box_{\ua,d}+\umu,\ualpha)\ ,\ (\Box_{c,\ub},\unu) + \ubeta\ ,\ \ugamma\,\bigr).
  \]
  Here, we define \defcolor{$(\Box_{\ua,d}{+}\umu,\ualpha)$} to be the sequence 
  $((\Box_{a_1,d}{+}\mu^1,\alpha^1),\dotsc, (\Box_{a_r,d}{+}\mu^r,\alpha^r) )$, and  
  similarly we define $(\Box_{c,\ub},\unu)+\ubeta$ to be the sequence
  $\bigl((\Box_{c,b_1},\nu^1)+\beta^1,\dotsc, (\Box_{c,b_s},\nu^s)+\beta^s\bigr)$.
  We always write a Schubert problem $\urho$ on $G(c,d)$ to compose with $\ulambda=(\umu,\unu)$ as a triple
  $(\ualpha,\ubeta,\ugamma)$ with $\umu$ and $\ualpha$ having the same number, $r$, of partitions, and both $\unu$ and
  $\ubeta$ have the same number, $s$, of partitions.
  We remark that any of the partitions $\mu^i$, $\nu^j$, $\alpha^i$, $\beta^j$, or $\gamma^k$ may be the zero partition
  $(0)$, whose Young diagram is $\emptyset$.\hfill$\diamond$   
\end{definition}

\begin{example}\label{Ex:compose}
  Let $\ulambda=(\umu,\unu)$ be the partition of a Schubert problem on $G(3,3)$ with  $\umu=(\T,\T,\I)$,
  $\nu=(\II,\I,\I)$, and $\ua=\ub=(1,1,1)$, which is from  Example~\ref{Ex:some_compositions}.  
  Let $\urho$ be the Schubert problem on $G(2,3)$ of Remark~\ref{R:expand}, expanded with some empty
  partitions, so that $\ualpha=(\I,\emptyset,\I)$, $\ubeta=(\I,\I,\emptyset)$, and $\ugamma=(\I,\I)$.
  The composition $\ulambda\circ\urho$ consists of the eight partitions
 \[
  \raisebox{-6pt}{\,\includegraphics{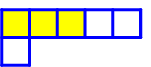}\,}\,,\,   
  \raisebox{-2pt}{\,\includegraphics{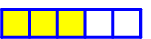}\,}\,,\,   
  \raisebox{-6pt}{\,\includegraphics{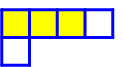}\,}\;\ ,\  \;  
  \raisebox{-14pt}{\,\includegraphics{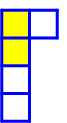}\,}\,,\,   
  \raisebox{-10pt}{\,\includegraphics{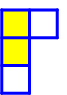}\,}\,,\,   
  \raisebox{-10pt}{\,\includegraphics{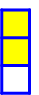}\,}\; \ ,\; \    
  \raisebox{-2pt}{\Ib}\, ,\,
  \raisebox{-2pt}{\Ib}   \ .
 \]
 We have shaded the rectangles $\Box_{a_i,3}$ and $\Box_{2,b_j}$ in the first six partitions.  \hfill$\diamond$
\end{example}
  
Our main combinatorial result is a product formula for the number of solutions to a composition of Schubert
problems. 

\begin{theorem}\label{Th:product}
  Let $(\umu,\unu), \ua,\ub$ be a composable partition of a Schubert problem $\ulambda$ on $G(a,b)$ and 
  $\urho=(\ualpha,\ubeta,\ugamma)$ a Schubert problem on $G(c,d)$.
  The number of solutions to $\ulambda\circ\urho$ is equal to the product of the number of solutions to $\ulambda$
  with the number of solutions to $\urho$,
  \[
      \delta(\ulambda\circ\urho)\ =\ 
      \delta(\ulambda)\cdot \delta(\urho)\,.
  \]
\end{theorem}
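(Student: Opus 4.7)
My plan is to compute $\delta(\ulambda\circ\urho)$ as the coefficient $\bigl[S_{\Box_{a+c,b+d}}\bigr]\,\Pi$ of the Schur product $\Pi := P_1\cdot P_2\cdot P_3$ in $\Lambda$, where
$P_1 := \prod_i S_{(\Box_{a_i,d}+\mu^i,\alpha^i)}$,
$P_2 := \prod_j S_{(\Box_{c,b_j},\nu^j)+\beta^j}$, and
$P_3 := \prod_k S_{\gamma^k}$,
and then to reduce this coefficient algebraically to $\delta(\ulambda)\cdot\delta(\urho)$. First I peel off the $S_{\alpha^i}$ factors from $P_1$: by Lemma~\ref{Lemma:LiOne}, $S_{(\Box_{a_i,d}+\mu^i,\alpha^i)} = S_{\Box_{a_i,d}+\mu^i}\cdot S_{\alpha^i} - H^i$, where $H^i$ is supported on Schur functions $S_\eta$ with $|\eta|_{a_i}>a_i d+|\mu^i|$. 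Setting $\tilde{P}_1 := \prod_i S_{\Box_{a_i,d}+\mu^i}\cdot S_{\alpha^i}$, iteration of Lemma~\ref{L:KeyStep} shows the difference $\tilde{P}_1-P_1$ is supported on $S_\rho$ with $|\rho|_a > ad+|\umu|$. Applying Lemma~\ref{L:KeyStep} to the $\omega$-conjugate of $P_2$ gives $|\tau'|_b \geq cb+|\unu|$ for every $S_\tau$ in $P_2$. Hence $|\rho|_a+|\tau'|_b > ad+ab+cb$, and Corollary~\ref{C:moreZeroes} forces $\sigma_\rho\sigma_\tau=0$ in $H^*(G(a+c,b+d))$, so $[\sigma_{\Box_{a+c,b+d}}]\,(\tilde{P}_1-P_1)\cdot P_2\cdot P_3 = 0$ and $P_1$ may be replaced by $\tilde{P}_1$. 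A symmetric argument replaces $P_2$ by $\tilde{P}_2 := \prod_j S_{(\Box_{c,b_j},\nu^j)}\cdot S_{\beta^j}$.

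Next I regroup $\tilde{P}_1\tilde{P}_2 = Q_{\umu}\cdot Q_{\unu}\cdot A$, with $Q_{\umu} := \prod_i S_{\Box_{a_i,d}+\mu^i}$, $Q_{\unu} := \prod_j S_{(\Box_{c,b_j},\nu^j)}$, and $A := \prod_i S_{\alpha^i}\cdot\prod_j S_{\beta^j}$. The standard bound $\ell(S_\mu\cdot S_\nu)\leq\ell(\mu)+\ell(\nu)$ for Schur products gives $\ell(\lambda)\leq\sum_i a_i = a$ for every $S_\lambda$ in $Q_{\umu}$; dually, $\eta_1\leq b$ for every $S_\eta$ in $Q_{\unu}$. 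By homogeneity, $|\lambda|=ad+|\umu|$ and $|\eta|=cb+|\unu|$, so $|\lambda|+|\eta|=ad+ab+cb$. Lemma~\ref{L:FBigHook} then applies: in $H^*(G(a+c,b+d))$, $\sigma_\lambda\sigma_\eta\neq 0$ requires $\lambda=\Box_{a,d}+\xi$ and $\eta=(\Box_{c,b},\xi^\vee)$ for some $\xi\subseteq\Box_{a,b}$, in which case $\sigma_\lambda\sigma_\eta = \sigma_{\Box_{c,d}^\vee}$. Corollary~\ref{C:BoxesOff} identifies the corresponding coefficients as $[S_{\Box_{a,d}+\xi}]\,Q_{\umu} = c^\xi_{\umu}$ and, after applying $\omega$, $[S_{(\Box_{c,b},\xi^\vee)}]\,Q_{\unu} = c^{\xi^\vee}_{\unu}$.

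Assembling these reductions,
\[
   \delta(\ulambda\circ\urho)\ =\ \sum_{\xi\subseteq\Box_{a,b}} c^\xi_{\umu}\,c^{\xi^\vee}_{\unu}\,\bigl[\sigma_{\Box_{a+c,b+d}}\bigr]\,\sigma_{\Box_{c,d}^\vee}\cdot\prod_i\sigma_{\alpha^i}\cdot\prod_j\sigma_{\beta^j}\cdot\prod_k\sigma_{\gamma^k}\,.
\]
By Lemma~\ref{L:zero}, $[\sigma_{\Box_{a+c,b+d}}]\,\sigma_{\Box_{c,d}^\vee}\cdot X = [\sigma_{\Box_{c,d}}]\,X$, and since $\prod_i\sigma_{\alpha^i}\prod_j\sigma_{\beta^j}\prod_k\sigma_{\gamma^k}$ is the Schubert product for $\urho$ on $G(c,d)$, the resulting coefficient equals $\delta(\urho)$. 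Corollary~\ref{C:duality} rewrites $\sum_\xi c^\xi_{\umu}\,c^{\xi^\vee}_{\unu} = [\sigma_{\Box_{a,b}}]\prod_i\sigma_{\mu^i}\prod_j\sigma_{\nu^j} = \delta(\ulambda)$, yielding the product formula. The most delicate point will be the second step: one must verify the length bound $\ell(\lambda)\leq a$ and the first-part bound $\eta_1\leq b$ so that Lemma~\ref{L:FBigHook} applies to $\lambda$ and $\eta$ simultaneously, and carefully track when identities hold in $\Lambda$ versus $H^*(G(a+c,b+d))$ --- the vanishings via Corollary~\ref{C:moreZeroes} are cohomological, while the coefficient of the top Schur class is preserved under truncation.
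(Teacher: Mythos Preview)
Your argument is correct and is essentially the paper's own proof, run in the opposite direction. The paper first isolates your second step as Lemma~\ref{L:BigHook} (showing $Q_{\umu}\cdot Q_{\unu}=\delta(\ulambda)\,\sigma_{\Box_{c,d}^\vee}$ in cohomology), uses it to evaluate the auxiliary product $Q_{\umu}\cdot Q_{\unu}\cdot A\cdot P_3$, and then, via Lemma~\ref{Lemma:LiOne} and Corollary~\ref{C:moreZeroes}, proves that this auxiliary product coincides with the composed product---exactly your first step, reversed. So the paper computes two ways starting from the middle, whereas you proceed linearly from $\Pi$ to $\delta(\ulambda)\delta(\urho)$; the lemmas invoked and the vanishing mechanisms are identical.

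One small expository point: when you write ``iteration of Lemma~\ref{L:KeyStep} shows the difference $\tilde P_1-P_1$ is supported on $S_\rho$ with $|\rho|_a>ad+|\umu|$,'' what you actually need is the \emph{argument} of Lemma~\ref{L:KeyStep} (iterated Corollary~\ref{C:inequality}), combined with the observation that each monomial in $\tilde P_1-P_1$ contains at least one factor $H^i$ whose support satisfies the strict bound $|\eta|_{a_i}>a_id+|\mu^i|$, while the remaining factors satisfy the non-strict bound. Summing via Corollary~\ref{C:inequality} then gives the strict inequality on $|\rho|_a$. The paper handles this the same way (see the discussion around~\eqref{Eq:manyKappas}); it is worth making this explicit, since Lemma~\ref{L:KeyStep} as stated only yields~$\geq$.
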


\begin{remark}\label{Derksen-Weyman}
  Derksen and Weyman use representations of quivers to prove a result~\cite[Thm.~7.14]{DW} expressing certain
  Littlewood-Richardson coefficients as products of other Littlewood-Richardson coefficients.
  When the Schubert problems $\ulambda$ and $\urho$  only involve three partitions each (e.g.\
  $\ulambda=(\mu^1,\mu^2,\nu^1)$ and $\urho=(\alpha^1,\alpha^2,\beta^1)$), then Theorem~\ref{Th:product} is a special case
  of this result of Derksen and Weyman.
  
  More generally, their methods lead to an alternative proof of
  Theorem~\ref{Th:product}. 
  First, their methods give an alternative proof of the enumerative consequence of Lemma~\ref{L:frozen} concerning the 
  equality of Littlewood-Richardson numbers,
  \[
    c^\lambda_{\mu,\nu}\ =\ c^{\Box_{l,d}+\lambda}_{\Box_{m,d}{+}\mu,\Box_{n,d}+\nu}\,,
  \]
  where $l=m+n$,  $l\geq\ell(\lambda)$, $m\geq\ell(\mu)$, and $n\geq\ell(\nu)$.
  Together with Lemmas~\ref{Lemma:LiOne} and~\ref{L:KeyStep} this implies Corollary~\ref{C:BoxesOff}, and eventually
  Theorem~\ref{Th:product}.
  We do not give the details of these arguments, as they require developing significant additional notation.
  We expect that representations of quivers may be used to give a direct proof
  of Theorem~\ref{Th:product}.  \hfill$\diamond$
\end{remark}

For an example of Theorem~\ref{Th:product}, observe that 
the Schubert problems $\ulambda$ and $\urho$ of Example~\ref{Ex:compose} both have five solutions, 
and thus $\ulambda\circ\urho$ has 25 solutions.
We first prove a preliminary result.

\begin{lemma}\label{L:BigHook}
  Let $(\umu,\unu), \ua,\ub$ be a composable partition of a Schubert problem $\ulambda$ on $G(a,b)$ and 
  let $c,d$ be nonnegative integers.
  In the cohomology ring of $G(a{+}c,b{+}d)$ we have
  \begin{equation}\label{Eq:boxyExpression}
   \prod_{i=1}^r \sigma_{\Box_{a_i,d}+\mu^i}\ \cdot\ 
   \prod_{j=1}^s \sigma_{(\Box_{c,b_j},\nu^j)}\ =\   \delta(\ulambda) \sigma_{\Box_{c,d}^\vee}\,,
  \end{equation}
 where $\Box_{c,d}^\vee$ is the complement of \/ $\Box_{c,d}$ in $\Box_{a+c,b+d}$.
\end{lemma}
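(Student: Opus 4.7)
The plan is to expand $K:=\prod_{i=1}^r \sigma_{\Box_{a_i,d}+\mu^i}$ and $L:=\prod_{j=1}^s \sigma_{(\Box_{c,b_j},\nu^j)}$ into sums of Schubert classes in $G(a{+}c,b{+}d)$, argue that only pairings producing $\sigma_{\Box_{c,d}^\vee}$ survive in $K\cdot L$, and then identify the total multiplicity with $\delta(\ulambda)$.

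I first obtain two bounds via Lemma~\ref{L:KeyStep}. Applied directly to $K$ (with each $\alpha^i=\emptyset$), it yields $|\rho|_a\geq ad+|\umu|$ for every $\sigma_\rho$ appearing in $K$. Applying the fundamental involution $\omega$ sends each factor $\sigma_{(\Box_{c,b_j},\nu^j)}$ of $L$ to $\sigma_{\Box_{b_j,c}+(\nu^j)'}$, so the same lemma gives $|\tau'|_b\geq cb+|\unu|$ for every $\sigma_\tau$ appearing in $L$. Since $|\umu|+|\unu|=ab$, adding these bounds gives $|\rho|_a+|\tau'|_b\geq ad+cb+ab$, and Corollary~\ref{C:moreZeroes} forces $\sigma_\rho\cdot\sigma_\tau=0$ unless equality holds throughout.

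When equality holds, I mimic the decomposition from the proof of Corollary~\ref{C:moreZeroes}: write $\rho=(\kappa,\alpha)$ with $\kappa=(\rho_1,\dotsc,\rho_a)$, and $\tau=\lambda+\beta$ with $\lambda$ formed by the first $b$ columns of $\tau$. Then $\ell(\kappa)\leq a$, $\lambda_1\leq b$, and $|\kappa|+|\lambda|=ad+ab+cb$, so Lemma~\ref{L:FBigHook} applies and gives either $\sigma_\kappa\sigma_\lambda=0$ (in which case the dominated product estimate used in Corollary~\ref{C:moreZeroes} forces $\sigma_\rho\sigma_\tau=0$) or $\sigma_\kappa\sigma_\lambda=\sigma_{\Box_{c,d}^\vee}$ with $\kappa=\Box_{a,d}+\eta$ and $\lambda=(\Box_{c,b},\eta^\vee)$ for some $\eta\subseteq\Box_{a,b}$. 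A degree count closes this step: $K\cdot L$ has total cohomological degree $ad+cb+ab=|\Box_{c,d}^\vee|$, so $|\alpha|+|\beta|=0$ and hence $\rho=\kappa$, $\tau=\lambda$.

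To finish, Corollary~\ref{C:BoxesOff} applied directly to $K$ identifies the coefficient of $\sigma_{\Box_{a,d}+\eta}$ in $K$ as $c^{\eta}_\umu$, and the same corollary applied after conjugating $L$ by $\omega$ (using the identity $(\eta^\vee)'=(\eta')^\vee$) identifies the coefficient of $\sigma_{(\Box_{c,b},\eta^\vee)}$ in $L$ as $c^{\eta^\vee}_\unu$. Consequently
\[
  K\cdot L \;=\; \Bigl(\sum_{\eta\subseteq\Box_{a,b}} c^{\eta}_\umu \cdot c^{\eta^\vee}_\unu\Bigr)\sigma_{\Box_{c,d}^\vee},
\]
and the parenthesised sum equals $\delta(\ulambda)$ by expanding $\prod_i\sigma_{\mu^i}\cdot\prod_j\sigma_{\nu^j}$ in the cohomology of $G(a,b)$ and applying Corollary~\ref{C:duality} together with Lemma~\ref{L:zero}. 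The principal technical step is combining the two Lemma~\ref{L:KeyStep} bounds (one via $\omega$) with Lemma~\ref{L:FBigHook} and the degree count to reduce to the equality case; once that is in place, the coefficient identification is a direct application of Corollary~\ref{C:BoxesOff} and Schubert duality.
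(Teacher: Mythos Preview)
Your proof is correct and follows the same overall arc as the paper's: expand the two products, show via Lemma~\ref{L:FBigHook} that only pairings yielding $\sigma_{\Box_{c,d}^\vee}$ survive, identify the coefficients with $c^\eta_{\umu}\,c^{\eta^\vee}_{\unu}$ via Corollary~\ref{C:BoxesOff}, and recognize the resulting sum as $\delta(\ulambda)$.

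The one substantive difference is how you reach the hypotheses of Lemma~\ref{L:FBigHook}. You invoke Lemma~\ref{L:KeyStep} and Corollary~\ref{C:moreZeroes} to bound $|\rho|_a$ and $|\tau'|_b$, then decompose $\rho=(\kappa,\alpha)$, $\tau=\lambda+\beta$ and use a degree count to kill $\alpha,\beta$. The paper bypasses all of this with a one-line observation: by Proposition~\ref{P:LR_Dominance}, any $\sigma_\kappa$ occurring in $K$ satisfies $\ell(\kappa)\leq\sum_i\ell(\Box_{a_i,d}+\mu^i)=\sum_i a_i=a$, and dually any $\sigma_\lambda$ in $L$ has $\lambda_1\leq b$. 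Since $|\kappa|=ad+|\umu|$ and $|\lambda|=cb+|\unu|$ are forced by degree, Lemma~\ref{L:FBigHook} applies immediately with no decomposition needed. In effect, your Lemma~\ref{L:KeyStep} bound $|\rho|_a\geq ad+|\umu|$ combined with the trivial $|\rho|_a\leq|\rho|=ad+|\umu|$ already gives $\ell(\rho)\leq a$, so your detour through Corollary~\ref{C:moreZeroes} and the degree count is doing no work. Your route is sound but circuitous; the paper's length bound via Proposition~\ref{P:LR_Dominance} is the cleaner reduction.
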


Let $\lambda=(\I,\I,\I,\I)$ be a composable Schubert problem on $G(2,2)$ with $\delta(\lambda)=2$.
Consider applying Lemma~\ref{L:BigHook} to $\lambda$ with $c=d=2$.
Then the product~\eqref{Eq:boxyExpression} becomes,
 \begin{multline}\label{eq:FirstComposedExample}
   \quad \sigma_{\sTh}\cdot  \sigma_{\sTh} \cdot \sigma_{\sIII}\cdot  \sigma_{\sIII}
    \ =\ 
    \Bigl(\sigma_{\sFT} + \sigma_{\sThTh}\Bigr)\cdot   
    \Bigl(\sigma_{\sTTII} + \sigma_{\sTTT}\Bigr) \\
    \ =\ \sigma_{\sFT}\cdot \sigma_{\sTTII}\ +\
         \sigma_{\sFT}\cdot \sigma_{\sTTT}\ +\
        \sigma_{\sThTh}\cdot \sigma_{\sTTII}\ +\
        \sigma_{\sThTh}\cdot \sigma_{\sTTT}
    \ =\  2 \sigma_{\sFFTT}\ =\ 2 \sigma_{4422}\ ,\quad
   \end{multline}
as the first and the last of the products on the second line are zero, by Corollary~\ref{C:moreZeroes}, while both middle
ones are equal to the Schubert class $\sigma_{4422}$, by Lemma~\ref{L:FBigHook}.

\begin{proof}[Proof of Lemma~\ref{L:BigHook}]
 Let \defcolor{$\Box_{\ua,d}+\umu$} be the list of partitions appearing in the first product and
 \defcolor{$(\Box_{c,\ub},\unu)$} be the list of partitions appearing in the second product.
 Suppose that $\sigma_\kappa$ appears in the first product.
 An induction based on Proposition~\ref{P:LR_Dominance} implies that
 \[
   \ell(\kappa)\ \leq\ \sum_{i=1}^r \ell(\Box_{a_i,d}+\mu^i)\ =\ \sum_{i=1}^r a_i\ =\ a\,.
 \]
 Applying the fundamental involution $\omega$ shows that if $\sigma_\lambda$ occurs in the second product, then
 $\lambda_1\leq \sum_j b_j = b$.

 Observe that we have
 \begin{equation}\label{Eq:sizes}
   |\kappa|\ =\ |\Box_{\ua,d}+\umu|\ =\
   \sum_{i=1}^r |\Box_{a_i,d}+\mu^i|\ =\  \sum_{i=1}^r a_id\  + |\umu|\ =\ ad+|\umu|\,,
 \end{equation}
 and similarly $|\lambda|=bc+|\unu|$.
 Thus $|\kappa|+|\lambda|=ad+cb+ab$ as $|\umu|+|\unu|=ab$.
 By Lemma~\ref{L:FBigHook}, if $\sigma_\kappa\cdot\sigma_\lambda\neq 0$, then there exists a partition
 $\alpha\subseteq\Box_{a,b}$ for $G(a,b)$ such that $\kappa=\Box_{a,d}+\alpha$ and $\lambda=(\Box_{c,b},\alpha^\vee)$, and 
 $\sigma_\kappa\cdot\sigma_\lambda=\sigma_{\Box_{c,d}^\vee}$.
 Since~\eqref{Eq:sizes} holds, we see that $|\alpha|=|\umu|$ and $|\alpha^\vee|=|\unu|$.
 By Corollary~\ref{C:BoxesOff}, the coefficient of $\sigma_\kappa$ in 
 $\prod_i \sigma_{\Box_{a_i,d}+\mu^i}$ is $c^\alpha_{\umu}$.
 Applying the fundamental involution $\omega\colon\sigma_\rho\mapsto\sigma_{\rho'}$ shows that the coefficient
 of $\sigma_\lambda$ in  $\prod_j \sigma_{(\Box_{c,b_j},\nu^j)}$ is $c^{\alpha^\vee}_{\unu}$.

 Consequently, if we expand each product in the expression~\eqref{Eq:boxyExpression} as a sum of Schubert classes
 $\sigma_\rho$ and then expand the product of those sums, the only terms which contribute are
 $c^\alpha_{\umu}\cdot c^{\alpha^\vee}_{\unu}\cdot\sigma_{\Box_{a,d}+\alpha}\cdot\sigma_{(\Box_{c,b},\alpha^\vee)}$, for 
 $\alpha\subseteq\Box_{a,b}$ with $|\alpha|=|\umu|$.
 That is,
 \begin{multline*}
  \qquad
  \prod_{i=1}^r \sigma_{\Box_{a_i,d}+\mu^i}\ \cdot\ \prod_{j=1}^s \sigma_{(\Box_{c,b_j},\nu^j)}
  \ =\ \Bigl(\sum_\rho c^\rho_{\Box_{\ua,d}+\umu}\, \sigma_\rho\Bigr)\cdot
  \Bigl(\sum_\tau c^\tau_{(\Box_{c,\ub},\unu)}\, \sigma_\tau\Bigr)\\
  =\ \sum_{\substack{\alpha\subseteq\Box_{a,b}\\|\alpha|=|\umu|}} c^\alpha_{\umu}\cdot c^{\alpha^\vee}_{\unu}\cdot
       \sigma_{\Box_{a,d}+\alpha}\cdot\sigma_{(\Box_{c,b},\alpha^\vee)}
  =\ \sum_{\substack{\alpha\subseteq\Box_{a,b}\\|\alpha|=|\umu|}} c^\alpha_{\umu}\cdot c^{\alpha^\vee}_{\unu}\cdot
       \sigma_{\Box_{c,d}^\vee}\ ,\qquad
 \end{multline*}
 the last equality by Lemma~\ref{L:FBigHook}. 
A similar expansion of $\delta(\ulambda)\sigma_{\Box_{a,b}}=\prod_i \sigma_{\mu^i}\cdot\prod_j \sigma_{\nu^j}$ gives
\[
  \prod_{i=1}^r \sigma_{\mu^i}\ \cdot\ \prod_{j=1}^s \sigma_{\nu^j}
  \ =\ \Bigl(\sum_\rho c^\rho_{\umu}\, \sigma_\rho\Bigr)\cdot
  \Bigl(\sum_\tau c^\tau_{\unu}\, \sigma_\tau\Bigr)
  \ =\ \sum_{\substack{\alpha\subseteq\Box_{a,b}\\|\alpha|=|\umu|}} c^\alpha_{\umu}\cdot c^{\alpha^\vee}_{\unu}\cdot
      \sigma_{\Box_{a,b}}\ .
\]
 As this is $\delta(\ulambda)\sigma_{\Box_{a,b}}$, we conclude that the sum of the product of
 coefficients in both expressions equals $\delta(\ulambda)$, which completes the proof of the lemma.
 \end{proof}

\begin{example}\label{Ex:Running_composition} 
 The proof of Theorem~\ref{Th:product} involves expanding a product in cohomology in two different ways.
 Let us begin with an example, where $\ulambda=\urho=(\I,\I,\I,\I)$ and $a=b=c=d=2$.
 Then $\umu=\unu=(\I,\I)$ and $\ua=\ub=(1,1)$, and we have $\Box_{\ua,d}+\umu=(\sTh\,,\sTh)$ and
 $(\Box_{c,\ub},\unu)=(\raisebox{-3pt}{\sIII}\,,\raisebox{-3pt}{\sIII})$.
 The product that we expand is
 \begin{multline}\quad
   \sigma_{\sTh}\cdot  \sigma_{\sTh} \cdot \sigma_{\sIII}\cdot  \sigma_{\sIII}   \label{Eq:longProduct}
   \cdot  \sigma_{\sI}\cdot  \sigma_{\sI}\cdot  \sigma_{\sI}\cdot  \sigma_{\sI}\ =\\\
   2 \sigma_{\sFFTT} \cdot
   \bigl(2\sigma_{\sTT}\ +\ \sigma_{\sIIII} + 3\sigma_{\sTII} + 3\sigma_{\sThI} + \sigma_{\sF}  \bigr)
   \ =\ 2 \sigma_{\sFFTT} \cdot 2\sigma_{\sTT}\ =\ 4 \sigma_{\Box_{4,4}}\,. \quad
 \end{multline}
 The first equality uses~\eqref{eq:FirstComposedExample} to evaluate the product of the first four terms and expands
 the next four using the Pieri formula and~\eqref{Eq:degreeThree}.
 On the other hand,
 \[
   \sigma_{\sTh} \cdot \sigma_{\sI}\ =\  \sigma_{\sThI}\ +\ \sigma_{\sF}
     \qquad\mbox{and}\qquad
   \sigma_{\sIII} \cdot \sigma_{\sI}\ =\  \sigma_{\sTII}\ +\ \sigma_{\sIIII}\ .
 \]
 We may rearrange~\eqref{Eq:longProduct}  to get
 \[
    \bigl(\sigma_{\sTh} \cdot \sigma_{\sI}\bigr)^2\, \cdot\, 
    \bigl(\sigma_{\sIII} \cdot \sigma_{\sI}\bigr)^2\ =\
     \bigl(\sigma_{\sThI} + \sigma_{\sF}\bigr)^2\, \cdot\, 
     \bigl( \sigma_{\sTII} + \sigma_{\sIIII}\bigr)^2
     \ =\ \sigma_{\sThI}^2 \,\cdot\,\sigma_{\sTII}^2\,,
 \]
 as the other terms in the product vanish in the cohomology of $G(4,4)$.
 Since
 \begin{eqnarray*}
   \sigma_{\sThI}^2 &=&
          \sigma_{\sFTT}+\sigma_{\sThThII} + \sigma_{\sThThT}+\sigma_{\sFTII} \ +\ \sigma_{\sFF}+ 2\sigma_{\sFThI}\,,\ \mbox{and}\\
   \sigma_{\sTII}^2 &=&
          \sigma_{\sThThII}+\sigma_{\sFTT} + \sigma_{\sFTII} + \sigma_{\sThThT}\ +\ \sigma_{\sTTTT}+2\sigma_{\sThTTI}\,,\\
 \end{eqnarray*}
 if we use Lemma~\ref{L:zero} to expand $\sigma_{\sThI}^2 \cdot\sigma_{\sTII}^2\,$, the only non-zero products in the
 expansion are of the form $\sigma_{\lambda}\cdot\sigma_{\lambda^\vee}=\sigma_{\Box_{4,4}}$, where $\sigma_\lambda$ occurs
 in $\sigma_{\sThI}^2$ and $\sigma_{\lambda^\vee}$ occurs in $\sigma_{\sTII}^2$.
 The only such pairs are each of the first four terms in the two expressions above.
 Thus we again see that~\eqref{Eq:longProduct} equals $4\sigma_{\Box_{4,4}}$.  \hfill$\diamond$
\end{example}

\begin{proof}[Proof of Theorem~\ref{Th:product}]
  We compute the product corresponding to the Schubert problem
  $( \Box_{\ua,d}+\umu, (\Box_{c,\ub},\unu), \urho )$ in two different ways.
  Using Lemma~\ref{L:BigHook},  in the cohomology of $G(a{+}c,b{+}d)$ we have
  \begin{equation}\label{Eq:ProductToCompare}
    \prod_{i=1}^r \sigma_{\Box_{a_i,d}+\mu^i}\ \cdot\ 
    \prod_{j=1}^s \sigma_{(\Box_{c,b_j},\nu^j)}\ \cdot\ \prod_{\rho\in\urho} \sigma_{\rho}
    \ =\  \delta(\ulambda) \sigma_{\Box_{c,d}^\vee}
     \ \cdot\ \prod_{\rho\in\urho} \sigma_{\rho}\ .
  \end{equation}
  Since $\urho$ is a Schubert problem on $G(c,d)$, the only partition $\kappa\subseteq\Box_{c,d}$ with $\sigma_\kappa$ appearing
  in the last product is $\Box_{c,d}$ and the corresponding term is $\delta(\urho)\sigma_{\Box_{c,d}}$.
  By Lemma~\ref{L:zero}, the product in~\eqref{Eq:ProductToCompare} is
  $\delta(\ulambda)\cdot \delta(\urho)\cdot \sigma_{\Box_{a+c,b+d}}$.

  As $\urho=(\ualpha,\ubeta,\ugamma)$, we may rearrange the product in~\eqref{Eq:ProductToCompare} to obtain
  \begin{equation}\label{Eq:Rearrange}
     \Bigl(\prod_{i=1}^r \sigma_{\Box_{a_i,d}+\mu^i}\cdot \sigma_{\alpha^i}\Bigr)\ \cdot\ 
     \Bigl(\prod_{j=1}^s \sigma_{(\Box_{c,b_j},\nu^j)}\cdot\sigma_{\beta^j}\Bigr)\ \cdot\
     \prod_{k=1}^t \sigma_{\gamma^k}\ .
  \end{equation}
  By Lemma~\ref{Lemma:LiOne}, each term in the first product expands as
  \begin{equation}\label{Eq:oneTermExpand}
    \sigma_{(\Box_{a_i,d}+\mu^i,\alpha^i)}\ +\
     \sum_{\kappa} c^\kappa_{\Box_{a_i,d}+\mu^i,\alpha^i} \,\sigma_\kappa\ ,
  \end{equation}
  where if $\kappa$ indexes a nonzero term in the sum, then $|\kappa|_{a_i}>a_id+|\mu^i|$.
  
  Expanding the product of these expressions gives a sum of products of the form
  \begin{equation}\label{Eq:manyKappas}
    \Bigl(\prod_{i=1}^r   c^{\kappa^i}_{\Box_{a_i,d}+\mu^i,\alpha^i}\Bigr)\,
    \sigma_{\kappa^1} \sigma_{\kappa^2} \dotsb\sigma_{\kappa^r}\,,
  \end{equation}
  where for each $i$, $|\kappa^i|_{a_i}\geq a_id+|\mu^i|$, and we have equality only when
  $\kappa^i=(\Box_{a_i,d}+\mu^i,\alpha^i)$.
  Suppose that a Schubert class $\sigma_\rho$ occurs in the expansion of such a product~\eqref{Eq:manyKappas}.
  Then by Corollary~\ref{C:inequality}, $|\rho|_a\geq ad+|\umu|$.
  If $|\rho|_a=ad+|\mu|$, then $\kappa^i=(\Box_{a_i,d}+\mu^i,\alpha^i)$  for each $i$, and so
  $\sigma_\rho$ occurs in the product  $\prod_i\sigma_{(\Box_{a_i,d}+\mu^i,\alpha^i)}$ of the first terms
  from~\eqref{Eq:oneTermExpand} and the other terms do not contribute. 

  Applying the fundamental involution $\omega$, each term in the second product may be expanded to give
  \begin{equation}\label{Eq:nuBetas}
    \sigma_{(\Box_{c,b_j},\nu^j) + \beta^j}\ +\
     \sum_{\kappa} c^\kappa_{(\Box_{c,b_j},\nu^j),\beta^j} \sigma_\kappa\ ,
  \end{equation}
  where if  $\kappa$ indexes a term in the sum, then $|\kappa'|_{b_j}> cb_j+|\nu^j|$.
  The same reasoning shows that if $\sigma_\tau$ appears in the expansion of the second product in~\eqref{Eq:Rearrange},
  then $|\tau'|_b\geq cb+|\unu|$.
  Furthermore, if $|\tau'|_b= cb+|\unu|$, then $\sigma_{\tau}$ occurs in the product
  $\prod_j\sigma_{(\Box_{c,b_j},\nu^j) + \beta^j}$ of the first terms in~\eqref{Eq:nuBetas} and the other terms do not
  contribute. 
  
  Now suppose that $\sigma_\rho$ occurs in the expansion of the first product and $\sigma_\tau$ in the second.
  By Corollary~\ref{C:moreZeroes}, if $|\rho|_a+|\tau'|_b> ad+cb+ab = ad+cb+|\umu|+|\unu|$, then the product
  $\sigma_\rho\cdot\sigma_\tau=0$ in the cohomology ring of $G(a{+}c,b{+}d)$.
  Thus if $\sigma_\rho\cdot\sigma_\tau\neq 0$, then $|\rho|_a=ad+|\mu|$ and $|\tau'|_b= cb+|\unu|$.
  These arguments and observations imply the identity in the cohomology ring of $G(a{+}c,b{+}d)$,
\[
    \Bigl(\prod_{i=1}^r \sigma_{\Box_{a_i,d}+\mu^i}\cdot \sigma_{\alpha^i}\Bigr)\ \cdot\ 
    \Bigl(\prod_{j=1}^s \sigma_{(\Box_{c,b_j},\nu^j)}\cdot\sigma_{\beta^j}\Bigr)
    \ =\ 
    \prod_{i=1}^r \sigma_{(\Box_{a_i,d}+\mu^i,\alpha^i)}\ \cdot\ 
    \prod_{j=1}^s \sigma_{(\Box_{c,b_j},\nu^j)+\beta^j}\ .
\]
 Thus in the cohomology ring of  $G(a{+}c,b{+}d)$ the product~\eqref{Eq:ProductToCompare} equals
 \begin{equation}\label{Eq:composed_product}
   \prod_{i=1}^r \sigma_{(\Box_{a_i,d}+\mu^i,\alpha^i)}\ \cdot\ 
    \prod_{j=1}^s \sigma_{(\Box_{c,b_j},\nu^j)+\beta^j}\ \cdot\ 
     \prod_{k=1}^t \sigma_{\gamma^k}\  ,
 \end{equation}
 which is the product of Schubert classes in the composition $\ulambda\circ\urho$.
 Thus the two Schubert problems $\ukappa=( \Box_{\ua,d}+\umu, (\Box_{c,\ub},\unu), \urho )$~\eqref{Eq:ProductToCompare} and 
 $\ulambda\circ\urho$~\eqref{Eq:composed_product} have the same number of solutions, so that
 $\delta(\ulambda)\cdot\delta(\urho)=\delta(\ukappa)=\delta(\ulambda\circ\urho)$, which completes the proof. 
\end{proof}

\begin{remark}\label{R:bijection}
 Under the bijection of~\eqref{eq:BijGLRTableaux} involving frozen rectangles, and its conjugate
 version, we get a bijection between generalized Littlewood-Richardson tableaux of shape $\Box_{a+c,b+d}$ and content
 $\ulambda\circ\urho$ and pairs $(S,T)$ of generalized Littlewood-Richardson tableaux, where
 $S$ has shape $\Box_{a,b}$ and content $\ulambda$ and $T$ has shape $\Box_{c,d}$ and content $\urho$.
 One of us (Sottile) observed such a bijection in 2012, and this led eventually to the notion of composition of Schubert
 problem.
 Let us illustrate this in the product in Example~\ref{Ex:Running_composition}
 for the composed Schubert problem $\ulambda\circ\ulambda$, where $\ulambda=(\I,\I,\I,\I)$.

 We first shade the `frozen rectangles' in the partitions encoding terms of~\eqref{eq:FirstComposedExample}, filling in the
 remaining boxes with $1,2,3,4$, one number for each partition
 $(\sTh,\sTh,\raisebox{-3pt}{\sIII},\raisebox{-3pt}{\sIII})$ indexing the product, and omitting 
 terms that do not contribute.
 We write the product schematically as sums of indexing partitions.
 \[
   \left(\raisebox{-10pt}{
   \begin{picture}(38,28)(-3.5,-3)
     \put(-3.5,-3){\includegraphics{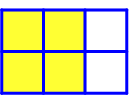}}
      \put(24, 12){\small$1$}  \put(24, 0){\small$2$} 
    \end{picture}
    \raisebox{10pt}{\ $+$\ }
    \begin{picture}(50,28)(-3.5,-3)
     \put(-3.5,-3){\includegraphics{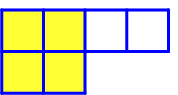}}
       \put(24, 12){\small$1$}  \put(36, 12){\small$2$} 
    \end{picture}
   }\right)
    \cdot 
   \left(\raisebox{-22pt}{
   \begin{picture}(26,50)(-3.5,-3)
     \put(-3.5,-3){\includegraphics{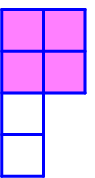}}
      \put(0, 12){\small$3$}  \put(0, 0){\small$4$}
    \end{picture}
    \raisebox{22pt}{\ $+$\ }
   \begin{picture}(28,50)(-3.5,-15)
     \put(-3.5,-3){\includegraphics{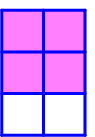}}
      \put(0, 0){\small$3$}  \put(12, 0){\small$4$}
    \end{picture}}\right)
   \ =\ 
   \raisebox{-22pt}{
     \begin{picture}(50,50)(-3.5,-3)
     \put(-3.5,-3){\includegraphics{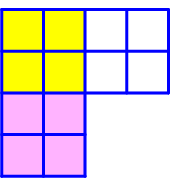}}
      \put(24, 36){\small$1$}  \put(36,36){\small$3$} 
      \put(24, 24){\small$2$}  \put(36,24){\small$4$} 
    \end{picture}
    \raisebox{22pt}{\ $+$\ }
   \begin{picture}(50,50)(-3.5,-3)
     \put(-3.5,-3){\includegraphics{pictures/4422b}}
      \put(24, 36){\small$1$}  \put(36,36){\small$2$} 
      \put(24, 24){\small$3$}  \put(36,24){\small$4$} 
    \end{picture}}
    \ .
  \]
  We similarly expand $\sigma_{\sI}^4$ using the letters $\alpha,\beta,\gamma,\delta$ instead of numbers
  to get
 \[
    \I\cdot \I\cdot \I\cdot \I\ =\ 
   \raisebox{-10pt}{
    \begin{picture}(25.5,26)(-3.5,-3)
     \put(-3.5,-3){\includegraphics{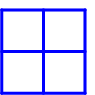}}
      \put(0, 12){\small$\alpha$}  \put(12,12){\small$\gamma$} 
      \put(0,  0){\small$\beta$}  \put(12, 0){\small$\delta$} 
    \end{picture}
   \raisebox{10pt}{\ $+$\ }
   \begin{picture}(25.5,26)(-3.5,-3)
     \put(-3.5,-3){\includegraphics{pictures/22b}}
      \put(0,12){\small$\alpha$}  \put(12,12){\small$\beta$} 
      \put(0, 0){\small$\gamma$}  \put(12, 0){\small$\delta$} 
    \end{picture}}
      \ .
  \]
  Similarly, if we expand the product of the eight terms coming from $\ulambda\circ\ulambda$, and express it schematically,
  we get
  \[
     \begin{picture}(51,52)(-3.5,-3)
     \put(-3.5,-3){\includegraphics{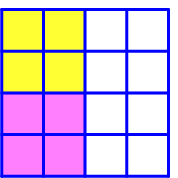}}
      \put(24, 36){\small$1$}        \put(36,36){\small$3$} 
      \put(24, 24){\small$2$}        \put(36,24){\small$4$}
      \put(24, 12){\small$\alpha$}   \put(36,12){\small$\gamma$} 
      \put(24,  0){\small$\beta$}    \put(36, 0){\small$\delta$} 
    \end{picture}
    \raisebox{22pt}{\quad $+$\quad }
   \begin{picture}(51,52)(-3.5,-3)
     \put(-3.5,-3){\includegraphics{pictures/4444b}}
      \put(24, 36){\small$1$}  \put(36,36){\small$2$} 
      \put(24, 24){\small$3$}  \put(36,24){\small$4$} 
      \put(24, 12){\small$\alpha$}   \put(36,12){\small$\gamma$} 
      \put(24,  0){\small$\beta$}    \put(36, 0){\small$\delta$} 
     \end{picture}
    \raisebox{22pt}{\quad $+$\quad }
     \begin{picture}(51,52)(-3.5,-3)
     \put(-3.5,-3){\includegraphics{pictures/4444b}}
      \put(24, 36){\small$1$}  \put(36,36){\small$3$} 
      \put(24, 24){\small$2$}  \put(36,24){\small$4$} 
      \put(24, 12){\small$\alpha$}   \put(36,12){\small$\beta$} 
      \put(24,  0){\small$\gamma$}   \put(36, 0){\small$\delta$} 
     \end{picture}
    \raisebox{22pt}{\quad $+$\quad }
   \begin{picture}(51,52)(-3.5,-3)
     \put(-3.5,-3){\includegraphics{pictures/4444b}}
      \put(24, 36){\small$1$}  \put(36,36){\small$2$} 
      \put(24, 24){\small$3$}  \put(36,24){\small$4$} 
      \put(24, 12){\small$\alpha$}   \put(36,12){\small$\beta$} 
      \put(24,  0){\small$\gamma$}   \put(36, 0){\small$\delta$} 
    \end{picture}
    \raisebox{22pt}{\ \ .}
    \eqno{ \raisebox{22pt}{$\diamond$}}
 \]
\end{remark} 

\section{Galois Groups of Composed Schubert Problems}
\label{Sec:GG}

Given a composable Schubert problem $\ulambda$ and any other Schubert problem $\urho$, Theorem~\ref{Th:product} shows that
$\delta(\ulambda\circ\urho)=\delta(\ulambda)\cdot\delta(\urho)$.
It was motivated by results in~\cite[Sec.~3.1]{GIVIX}, which we may now interpret as geometric proofs of
this product identity, for the  nontrivial composable Schubert problems $\ulambda$ on $G(2,2)$ and on $G(2,3)$.
A consequence of those geometric results is that the Galois group of such a composition is imprimitive,
specifically, it is a subgroup of a wreath product,
 \begin{equation}\label{Eq:GaloisWreath}
  \Gal_{\ulambda\circ\urho}\ \subset\
  \defcolor{\Gal_{\urho} \wr \Gal_{\ulambda}}\ :=\ ( \Gal_{\urho} )^{\delta(\ulambda)}\rtimes \Gal_{\ulambda}\ .
 \end{equation}
Further lemmas in~\cite[Sec.~3.1]{GIVIX} established equality for $\ulambda$ a composable Schubert problem on $G(2,2)$ or
$G(2,3)$.

We discuss Galois groups of Schubert problems, and following~\cite[Sec.~3]{GIVIX} identify a structure (a fibration of
Schubert problems) which implies the imprimitivity of a Galois group.
We then describe a class of Schubert problems all of whose compositions are fibered, and close with computational
evidence that more general compositions of Schubert problems have imprimitive Galois groups.

\subsection{Galois groups and fibrations of Schubert problems}
Let $\ulambda=(\lambda^1, \dots, \lambda^r)$ be a Schubert problem on $G(a,b)$ with $\delta(\ulambda)$ solutions.
We write \defcolor{$\Fl_{a+b}$} for the manifold of complete flags in $\CC^{a+b}$,
$\defcolor{\calFdot}:=(\Fdot^1,\dotsc,\Fdot^r)\in(\Fl_{a+b})^r$ for a list of flags, and 
\[
  \defcolor{\Omega_{\ulambda}\calFdot}\ :=\
    \Omega_{\lambda^1}\Fdot^1\,\cap\,
  \Omega_{\lambda^2}\Fdot^2\,\cap\,\dotsb\,\cap\,
  \Omega_{\lambda^r}\Fdot^r\,,
\]
for the instance~\eqref{Eq:SchubertIntersection} of the Schubert problem $\ulambda$ given by $\calFdot$.
Consider the total space of the Schubert problem $\ulambda$, 
\[
  \defcolor{\Omega_{\ulambda}}\ :=\
  \{(H,\calFdot)\in  G(a,b)\times(\Fl(a{+}b))^r
    \mid H \in \Omega_{\ulambda} \calFdot \}.
\]
This is irreducible, as the fiber of the projection $\Omega_{\ulambda}\to G(a,b)$ over a point $H\in G(a,b)$ is a product
$Z_{\lambda^1}H\times\dotsb\times Z_{\lambda^r}H$, where
\[
  Z_\lambda H\ :=\ \{\Fdot\mid H\in\Omega_\lambda\Fdot\}
\]
is a Schubert variety of the flag variety $\Fl(a{+}b)$.
Since each Schubert variety is irreducible, $\Omega_{\ulambda}$ as it is fibered over an irreducible variety $G(a,b)$ with
irreducible fibers.
  
The fiber $\pi^{-1}(\calFdot)$  of the projection $\pi\colon \Omega_{\ulambda} \rightarrow (\Fl(a{+}b))^r$ is the
instance $\Omega_{\ulambda}\calFdot$ of the Schubert problem $\ulambda$ given by the flags $\calFdot$. 
When the flags are general, this is a transverse intersection and consists of $\delta(\ulambda)$ points, and therefore
$\pi$ is a branched cover of degree $\delta(\ulambda)$.
The projection map $\pi$ induces an inclusion of function fields
$\pi^*\colon \CC \bigl( (\Fl_{a+b})^r \bigr) \hookrightarrow \CC(\Omega_{\ulambda})$, with the extension of degree
$\delta(\ulambda)$.
The \demph{Galois group} \defcolor{$\Gal_{\ulambda}$} of $\ulambda$ is the Galois group of the Galois closure of this
extension.

Such \demph{Schubert Galois groups} are beginning to be studied~\cite{BdCS,LS09,MSJ,GIVIX,SW_double,Va06b}, 
and while $\Gal_{\ulambda}$ is typically the full symmetric group $S_{\delta(\ulambda)}$, it is common
$\Gal_{\ulambda}$ to be imprimitive.
Let us recall some theory of permutation groups, for more, see~\cite{Wielandt}.
A permutation group is a subgroup $G$ of some symmetric group $S_\delta$, so that $G$ has a faithful
action on $\defcolor{[\delta]}:=\{1,\dotsc,\delta\}$.
The group $G$ is \demph{transitive} if it has a single orbit on $[\delta]$.
Galois groups are transitive permutation groups.
A \demph{block} of a permutation group $G$ is a subset $B$ of  $[\delta]$ such that for every $g\in G$ either
$gB\cap B=\emptyset$ or $gB=B$.
The orbit of a block under $G$ generates a partition of $[\delta]$ into blocks.
The group $G$ is primitive if its only blocks are singletons or $[\delta]$ itself, and \demph{imprimitive} otherwise.

When $G$ is imprimitive, there is a factorization $\delta=p\cdot q$ and an identification $[\delta]=[p]\times[q]$ with
$p,q>1$. 
If we let $\pi\colon[p]\times[q]\to [p]$ be the projection onto the first factor, then the fibers are blocks of $G$ and 
the action of $G$ on $[p]\times[q]$ preserves this fibration.
Conversely, if $G$ preserves such a nontrivial fibration, then it acts imprimitively.

A Schubert Galois group is imprimitive if its Schubert problem forms a fiber bundle
whose base and fibers are nontrivial Schubert problems in smaller Grassmannians.
Such a structure is called a decomposable projection in~\cite{AR}, which is equivalent to the Galois group being
imprimitive~\cite{SDSS,PS}.
The existence of such a structure implies that  $\Gal_{\ulambda}$ is a subgroup of the wreath product of the Galois groups
of the two smaller Schubert problems.
We give the definition of a fibration of Schubert problems from~\cite{GIVIX}.

\begin{definition}\label{D:fibration}
 Let $\ukappa$, $\ulambda$, and $\urho$ be nontrivial Schubert problems on $G(a{+}c,b{+}d)$, $G(a,b)$, and $G(c,d)$,
 respectively. 
 We say that \demph{$\ukappa$ is fibered over $\ulambda$ with fiber $\urho$} if the following hold.
 \begin{enumerate}
   \item For every general instance $\calFdot\in(\Fl_{a+c+b+d})^r$ of $\ukappa$, there is a subspace $V\subset\CC^{a+c+b+d}$ of
 dimension $a{+}b$ and an instance $\calEdot$ of $\ulambda$ in $\Gr(a,V)$ such that for every 
 $H\in\Omega_{\ukappa}\calFdot$, we have $H\cap V\in\Omega_{\ulambda}\calEdot$.
  \item If we set $W:=\CC^{a+c+b+d}/V$, then for any $h\in \Omega_{\ulambda}\calEdot$, there is an instance
 $\calFdot(h)$ of $\urho$ in $\Gr(c,W)$ such that if $\defcolor{h}:=H\cap V$, then $H/h\in\Gr(c,W)$ is a solution to
 $\Omega_{\urho}\calFdot(h)$.
  \item The association $H\mapsto (h,H/h)$, where $h:=H\cap V$, is a bijection between the sets of solutions 
    $\Omega_{\ukappa}\calFdot$ and $\Omega_{\ulambda}\calEdot\times \Omega_{\urho}\calFdot(h)$.
  \item For a given $V\simeq\CC^{a+b}$, all general instances $\calEdot$ of $\ulambda$ on $\Gr(a,V)$ may be obtained in
    (1). 
    For a given general instance $\calEdot$ of $\ulambda$ and $h\in\Omega_{\ulambda}\calEdot$, the instances $\calFdot(h)$
    of $\urho$ which arise are also general.
  \end{enumerate}
 This is called a fibration as the second instance $\calFdot(h)$ depends upon $h$.\hfill$\diamond$
\end{definition}

The following consequence of a fibration is Lemma~15 of \cite{GIVIX}.

\begin{lemma}\label{L:Fibered}
  If $\ukappa$ is a Schubert problem fibered over $\ulambda$ with fiber $\urho$, then
  $\delta(\ukappa) =\delta(\ulambda)\cdot \delta(\urho)$ and $\Gal_{\ukappa}$ is a subgroup of the wreath product
  $\Gal_{\ulambda} \wr \Gal_{\urho}$.
  If $\ulambda$ and $\urho$ are nontrivial, then $\Gal_{\ukappa}$ is imprimitive.
  Also, the image of $\Gal_{\ukappa}$ under the map to $\Gal_{\ulambda}$ is surjective, and the kernel, which is a
  subgroup of $(\Gal_{\urho})^{\delta(\ulambda)}$, has image $\Gal_{\urho}$ under projection to any factor.
\end{lemma}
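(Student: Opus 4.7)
The plan is to convert every Galois-theoretic statement into a monodromy statement via Harris's identification of the Galois group of a branched cover with the monodromy group of its unbranched restriction. Once this identification is in place, the bijection furnished by condition~(3) of Definition~\ref{D:fibration} reduces the lemma to bookkeeping of how a loop of parameters moves the solutions. The product formula $\delta(\ukappa)=\delta(\ulambda)\cdot\delta(\urho)$ is immediate from the bijection
\[
   \Omega_{\ukappa}\calFdot\ \longleftrightarrow\
   \bigsqcup_{h\in\Omega_{\ulambda}\calEdot}\{h\}\times \Omega_{\urho}\calFdot(h).
\]

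To obtain the embedding of $\Gal_{\ukappa}$ into the wreath product, I would take a small loop $\gamma$ in the parameter space of $\ukappa$ based at a general $\calFdot$. The data $(V,\calEdot,\{\calFdot(h)\}_h)$ of conditions~(1) and~(2) depend continuously (and in the natural constructions algebraically) on $\calFdot$, so $\gamma$ induces a loop $\gamma_{\ulambda}$ in the parameter space of $\ulambda$ and, for each $h\in\Omega_{\ulambda}\calEdot$, a loop in the parameter space of $\urho$. By condition~(3), the map $H\mapsto H\cap V$ is respected along $\gamma$, so the monodromy permutation of $\gamma$ preserves the block system $\{H\in\Omega_{\ukappa}\calFdot:H\cap V=h\}$ and permutes these blocks exactly as $\gamma_{\ulambda}$ permutes $\Omega_{\ulambda}\calEdot$. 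Within each block the action arises from a monodromy in $\Gal_{\urho}$. This yields the embedding
\[
  \Gal_{\ukappa}\ \hookrightarrow\ (\Gal_{\urho})^{\delta(\ulambda)}\rtimes\Gal_{\ulambda}.
\]
Imprimitivity is then immediate when $\delta(\ulambda),\delta(\urho)>1$, since the block system just exhibited is nontrivial.

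For the surjectivity and kernel statements I would invoke condition~(4). The first clause says every general instance $\calEdot$ on $\Gr(a,V)$ is realized by some $\calFdot$, which makes the induced map from the parameter space of $\ukappa$ to that of $\ulambda$ dominant; loops in the latter therefore lift to loops in the former, and the image of $\Gal_{\ukappa}$ in $\Gal_{\ulambda}$ is all of $\Gal_{\ulambda}$.  For the kernel, fix $\calEdot$ and $h\in\Omega_{\ulambda}\calEdot$, and consider loops in the $\ukappa$-parameter space that hold $\calEdot$ fixed but vary the auxiliary data determining $\calFdot(h)$: the second clause of~(4) says the resulting instances of $\urho$ are general, so loops in the $\urho$-parameter space lift to loops of $\calFdot$ that fix every base solution and realize any prescribed element of $\Gal_{\urho}$ on the fiber over~$h$.

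The main obstacle is the continuity/algebraicity assertion underlying the second paragraph: Definition~\ref{D:fibration} specifies the data $(V,\calEdot)$ at each general $\calFdot$ individually, and to run the monodromy argument rigorously one must promote this pointwise assignment to an algebraic family over a Zariski-open subset of $(\Fl_{a+c+b+d})^r$. In the constructions from~\cite{GIVIX} that motivate Definition~\ref{D:fibration} this is transparent, since $V$ is built explicitly from the flags $\Fdot^i$; for an abstract fibration of Schubert problems the continuity must either be built into the definition or verified case-by-case, and this is where the most care is required.
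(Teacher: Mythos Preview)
The paper does not prove this lemma; it simply records it as Lemma~15 of~\cite{GIVIX}. Your monodromy argument is the standard one and is essentially what that reference carries out, so there is no meaningful comparison of approaches to make here.

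Your sketch is sound in outline, and the obstacle you isolate at the end is exactly the right one. Definition~\ref{D:fibration} as written is a pointwise statement: for each general $\calFdot$ separately there exist $V$, $\calEdot$, and $\calFdot(h)$ with the stated properties. To transport solutions along a loop and conclude anything about monodromy, you need these auxiliary objects to vary algebraically (or at least continuously) in $\calFdot$ over some Zariski-open set; otherwise the phrase ``$\gamma$ induces a loop $\gamma_{\ulambda}$'' has no content. In the concrete fibrations constructed in this paper and in~\cite{GIVIX}, $V$ and the induced flags are built by explicit linear-algebraic formulas in the entries of $\calFdot$, so this is automatic; but for the abstract statement one must either strengthen the definition or check it in each application, as you say.

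One small addition worth making explicit in your surjectivity step: to conclude that loops in the $\ulambda$-parameter space lift to \emph{loops} (not merely paths) in the $\ukappa$-parameter space, you use that the fibers of the dominant map are irreducible (hence connected), which follows once the assignment $\calFdot\mapsto\calEdot$ is realized as a morphism between irreducible varieties. This is again subsumed by the algebraicity issue you already flagged.
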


We make the following two conjectures.

\begin{conjecture}\label{C:imprimitive}
 The Galois group $\Gal_{\ulambda\circ\urho}$ of a composition of non-trivial Schubert problems is imprimitive.
\end{conjecture}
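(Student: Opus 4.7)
The plan is to prove the conjecture by showing that every composition $\ulambda\circ\urho$ of nontrivial Schubert problems admits a fibration structure in the sense of Definition~\ref{D:fibration}; Lemma~\ref{L:Fibered} will then yield imprimitivity immediately.

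My first step would be to attempt a direct geometric construction of the fibration. For a general instance $\calFdot$ of $\ulambda\circ\urho$ on $G(a{+}c,b{+}d)$, the goal is to single out canonically a subspace $V\subseteq\CC^{a+b+c+d}$ of dimension $a{+}b$, together with induced flag data in $V$ and in the quotient $W:=\CC^{a+b+c+d}/V$, so that the map $H\mapsto(H\cap V,\,H/(H\cap V))$ sends solutions of $\ulambda\circ\urho$ to pairs of solutions of $\ulambda$ and $\urho$ on the smaller Grassmannians. The rectangular parts $\Box_{a_i,d}$ and $\Box_{c,b_j}$ in the composed conditions should be the source of $V$: by a geometric refinement of Lemma~\ref{L:FBigHook}, these should force every $H\in\Omega_{\ulambda\circ\urho}\calFdot$ to meet a specific $(a{+}b)$-dimensional subspace of $\CC^{a+b+c+d}$ in dimension exactly $a$. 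Concretely, each flag $\Fdot^i$ attached to a condition of the form $(\Box_{a_i,d}+\mu^i,\alpha^i)$ ought to contribute a subspace of codimension $c$ to a formula for $V$; intersecting these across all $i$ should cut out the desired $V$.

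Once $V$ is constructed, the next step is to verify the four clauses of Definition~\ref{D:fibration}. The bijection in clause (3) is effectively guaranteed by Theorem~\ref{Th:product}, since both sides have cardinality $\delta(\ulambda)\delta(\urho)$; hence $H\mapsto(H\cap V,\,H/(H\cap V))$ need only be shown to be well-defined and injective. Clauses (1) and (2) amount to checking that the restriction of a Schubert variety in $G(a{+}c,b{+}d)$ to the sub-Grassmannian $\Gr(a,V)$, respectively to $\Gr(c,W)$, is again a Schubert variety for the partition expected from the composition recipe --- namely $\mu^i$ or $\alpha^i$ for the first family, $\nu^j$ or $\beta^j$ for the second, and $\gamma^k$ restricting entirely to $W$. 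Clause (4), a generality statement, should follow by varying $\calFdot$ and comparing dimensions of parameter spaces.

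The hard part will be constructing $V$ canonically for an arbitrary composable partition. When $\ulambda$ is a \emph{block column Schubert problem} (Section~\ref{Sec:GG}), the flags can be aligned so that a common $V$ exists from the structure of the block columns and the fibration goes through. For a general composition, however, the existence and especially the uniqueness of $V$ associated to a generic instance is not transparent from the Schubert conditions alone. If a canonical $V$ fails to exist, I would replace the strict fibration by the incidence variety $X\subseteq\Omega_{\ulambda\circ\urho}\times\Gr(a{+}b,\CC^{a+b+c+d})$ consisting of pairs $(H,V)$ satisfying the natural incidence conditions, and attempt to prove that the projection $X\to\Omega_{\ulambda\circ\urho}$ is birational over a general instance. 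Generic uniqueness of $V$ is the principal unresolved step; should it fail, I would fall back on a monodromy argument that propagates imprimitivity from the block column cases via a deformation of the flags inside the parameter space of $\ulambda\circ\urho$, using that imprimitivity is detected by the existence of a nontrivial block system preserved by the monodromy action.
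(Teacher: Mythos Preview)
The statement is a \emph{conjecture} in the paper, not a theorem; the paper gives no proof of it. What the paper does establish is the special case where $\ulambda$ is a \emph{block column} Schubert problem (Theorem~\ref{Th:Fibration} and Corollary~\ref{C:Fibration}), by explicitly constructing the fibration of Definition~\ref{D:fibration}; for general compositions it offers only computational evidence (Section~\ref{S:computation}). So there is no paper proof to compare against, and your proposal should be read as an attempt to settle an open problem.

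Your outline tracks the paper's strategy in the block column case and correctly isolates the obstruction: producing, from a general instance of $\ulambda\circ\urho$, a canonical $(a{+}b)$-dimensional subspace $V$. However, the specific recipe you suggest---intersecting, over $i=1,\dotsc,r$, subspaces of codimension $c$ coming from the flags attached to $(\Box_{a_i,d}{+}\mu^i,\alpha^i)$---does not generally produce a space of the correct dimension (generically the intersection has codimension $rc$, not $c{+}d$), and it is not the construction the paper uses even in the block column case, where $V$ is a \emph{sum} $F^1_{b_2+a_1}+F^2_{b_1+a_2}$ of two small subspaces. No analogous sum is available once $\umu$ has more than two parts or non-rectangular shapes, which is precisely why the paper leaves the general case open.

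Your two fallback strategies do not close the gap either. The incidence-variety route requires the projection $X\to\Omega_{\ulambda\circ\urho}$ to be birational, which is equivalent to the generic uniqueness of $V$ you already flagged as unresolved. The monodromy-deformation route, as stated, is ill-posed: different composable $\ulambda$ yield \emph{different} Schubert problems $\ulambda\circ\urho$ (different lists of partitions), so they do not share a common flag parameter space within which one could deform from a general composition to a block column one. As written, the proposal is a reasonable research plan but not a proof.
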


By Lemma~\ref{L:Fibered}, this is implied by a second, stronger conjecture.

\begin{conjecture}\label{C:fiber}
 A composed Schubert problem $\ulambda\circ\urho$ is fibered over $\ulambda$ with fiber $\urho$.
\end{conjecture}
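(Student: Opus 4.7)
The plan is to lift the algebraic product formula of Theorem~\ref{Th:product} to a geometric fibration structure, generalizing the block column construction of Section~\ref{Sec:GG}. The key idea is that the composed Schubert conditions $(\Box_{a_i,d}+\mu^i,\alpha^i)$ and $(\Box_{c,b_j},\nu^j)+\beta^j$ split into ``large rectangle'' parts (the $\Box_{a_i,d}$ and $\Box_{c,b_j}$) and ``small'' parts (the $\mu^i,\nu^j,\alpha^i,\beta^j,\gamma^k$). The large rectangles should cut out a distinguished $(a+b)$-dimensional subspace $V\subseteq\CC^{a+b+c+d}$ with the property that every solution $H$ satisfies $\dim H\cap V = a$, while the small parts distribute as Schubert conditions on $h:=H\cap V \in \Gr(a,V)$ (yielding an instance of $\ulambda$) and on $H/h \in \Gr(c,W)$ with $W := \CC^{a+b+c+d}/V$ (yielding an instance of $\urho$).

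First, given a general instance $\calFdot$, I would construct $V$ canonically from $\calFdot$. By Lemma~\ref{L:BigHook}, the cohomological product of the big-rectangle Schubert classes equals $\delta(\ulambda)\,\sigma_{\Box_{c,d}^\vee}$, and each Schubert variety $\Omega_{\Box_{c,d}^\vee}\Fdot$ is the locus $\{H : F_a \subseteq H \subseteq F_{a+c+d}\}$, depending only on the pair $(F_a,F_{a+c+d})$. One expects that for general flags the intersection of the big Schubert varieties decomposes set-theoretically as a disjoint union of $\delta(\ulambda)$ such varieties $\Omega_{\Box_{c,d}^\vee}\tilde\Fdot^{(k)}$, indexed by pairs $(V_a^{(k)},U^{(k)})$, and that the $V_a^{(k)}$ are precisely the solutions of a derived instance $\calEdot$ of $\ulambda$ on $\Gr(a,V)$ for a canonically constructed $V\simeq\CC^{a+b}$.

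Next, I would verify the four properties of Definition~\ref{D:fibration}. For (1) and (2), each $V_a^{(k)}$ serves as $h=H\cap V$ for the solutions of $\ulambda\circ\urho$ lying in the $k$-th component, and the remaining conditions from $\ualpha,\ubeta,\ugamma$, restricted to $H/h\in\Gr(c,W)$, assemble into an instance $\calFdot(h)$ of $\urho$ (with $U^{(k)}$ providing the relevant flag structure). For (3), the map $H\mapsto(h,H/h)$ is well-defined, and the two sets have the same cardinality by Theorem~\ref{Th:product}, so it suffices to show injectivity: the rectangular parts of the composed conditions, together with the pair $(h, H/h)$, should determine $H$ uniquely by pinning down the lift of $H/h$ from $W$ to $\CC^{a+b+c+d}$. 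Property (4), the genericity of $\calEdot$ and $\calFdot(h)$, would follow by tracking the canonical construction and a dimension count.

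The main obstacle is the first step: upgrading Lemma~\ref{L:BigHook} from a cohomological identity to an actual set-theoretic decomposition, and explicitly constructing $V$ and $\calEdot$ from $\calFdot$. In the block column case of Section~\ref{Sec:GG}, the column structure of the partitions produces this decomposition explicitly and canonically. In general, a natural attack is by degeneration: specialize $\calFdot$ to a configuration where the intersection of the big Schubert varieties manifestly breaks into $\delta(\ulambda)$ copies of Schubert varieties of type $\Box_{c,d}^\vee$, then show by transversality and semicontinuity that the decomposition, and the accompanying identification with an instance of $\ulambda$, persist as distinct irreducible components under deformation back to a general flag. The subtlety, and the reason this conjecture remains open, is that without a combinatorial handle on general $\umu,\unu$, there is no canonical a priori construction of $V$ from $\calFdot$, so the decomposition must be recovered from general Schubert-calculus principles rather than from an explicit formula.
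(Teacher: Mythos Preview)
The paper does not prove Conjecture~\ref{C:fiber}; it is stated as an open conjecture. What the paper establishes is the special case where $\ulambda$ is a block column Schubert problem (Theorem~\ref{Th:Fibration}), via an explicit construction of $V$ as the sum $F^1_{b_2+a_1}+F^2_{b_1+a_2}$ of two distinguished flag subspaces, together with computational evidence for the weaker Conjecture~\ref{C:imprimitive} in Section~\ref{S:computation}. Your proposal is accordingly not a proof but a research outline, and you correctly flag the main obstacle: producing a canonical $V\simeq\CC^{a+b}$ from general $\calFdot$ when $\ulambda$ is not of block column type.

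One point in your plan deserves sharper scrutiny. You propose to realize the intersection of the ``big rectangle'' Schubert varieties as a disjoint union of $\delta(\ulambda)$ translates of $\Omega_{\Box_{c,d}^\vee}$, each determined by a pair $(V_a^{(k)},U^{(k)})$, and then to recognize the $V_a^{(k)}$ as the solutions of an instance of $\ulambda$ inside some common $(a{+}b)$-plane $V$. But the cohomological identity of Lemma~\ref{L:BigHook} does not by itself force such a set-theoretic decomposition into Schubert varieties, and even if the decomposition holds, there is no a priori reason the $a$-planes $V_a^{(k)}$ should all lie in a single subspace of dimension $a{+}b$. In the paper's block column argument the logic runs in the opposite order: $V$ is built \emph{first}, directly from the flags, and only afterwards does one verify that every solution $H$ meets $V$ in an $a$-plane. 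For general $\umu$ with $r>2$ partitions there is no evident analogue of the two-summand construction, so your degeneration approach would have to manufacture $V$ as a limit object and track it under deformation, rather than reconstruct it after the fact from the components $V_a^{(k)}$. That is a genuinely harder problem than what you have sketched, and is presumably why the conjecture remains open.
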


We prove Conjecture~\ref{C:fiber} for a class of composable Schubert problems in the next section, and 
provide computational evidence for Conjecture~\ref{C:imprimitive} in Section~\ref{S:computation}.

\subsection{Block column Schubert problems}

We identify a family of composable Schubert problems that we call block column Schubert problems. 
This includes all composable Schubert problems on $G(2,2)$ and $G(2,3)$ that were studied in~\cite[Sect.~3.1]{GIVIX}.
We show that for any block column Schubert problem $\ulambda$ and any Schubert problem
$\urho$, the composition $\ulambda\circ\urho$ is a Schubert problem that is fibered over $\ulambda$ with fiber $\rho$.
This proves that the Galois group of the composition $\ulambda\circ\urho$ is imprimitive when both $\ulambda$ and $\urho$
are nontrivial.  

\begin{definition}\label{D:BCSP}
 A \demph{block column Schubert problem} on $G(a,b)$ is a Schubert problem $\ulambda=(\umu,\unu)$ of the following form.
 The first part $\umu$ consists of two rectangular partitions, $\umu=(\Box_{a_1,b_1},\Box_{a_2,b_2})$, where $a=a_1+a_2$
 and $b=b_1+b_2$.
 The second part $\unu$ consists of two families of rectangular partitions,
 $\unu=(\Box_{a_2,m_1},\dotsc,\Box_{a_2,m_p}\,,\,\Box_{a_1,n_1},\dotsc,\Box_{a_1,n_q})$, where
 $m_1+\dotsb+m_p=b_1$ and $n_1+\dotsb+n_q=b_2$.\hfill$\diamond$
\end{definition}

\begin{lemma}
 A block column Schubert problem is composable.
\end{lemma}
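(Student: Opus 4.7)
The proof is a routine verification, and the plan is to simply exhibit the composable partition already built into Definition~\ref{D:BCSP}. I would proceed in three short steps.

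First, I would check that $\ulambda = (\umu, \unu)$ really is a Schubert problem on $G(a,b)$. Each rectangular partition fits in $\Box_{a,b}$ because $a_1, a_2 \le a$, $b_1, b_2 \le b$, every $m_k \le b_1 \le b$, and every $n_k \le b_2 \le b$. For the size,
\[
 |\ulambda| \ =\ a_1 b_1 + a_2 b_2 + a_2\sum_{k=1}^{p} m_k + a_1\sum_{k=1}^{q} n_k
 \ =\ a_1 b_1 + a_2 b_2 + a_2 b_1 + a_1 b_2 \ =\ (a_1+a_2)(b_1+b_2) \ =\ ab,
\]
as required.

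Second, I would take $\ua := (a_1, a_2)$ and $\ub := (m_1, \ldots, m_p, n_1, \ldots, n_q)$, so that $|\ua| = a_1 + a_2 = a$ and $|\ub| = (m_1 + \cdots + m_p) + (n_1 + \cdots + n_q) = b_1 + b_2 = b$, matching the dimensions of the ambient Grassmannian.

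Third, I would verify the length and first-part inequalities of~\eqref{Eq:ComposablePartition}. For $\umu$, we have $\ell(\Box_{a_i, b_i}) = a_i$, matching $\ua$ with equality. For $\unu$, the first part of each $\Box_{a_2, m_k}$ equals $m_k$ and the first part of each $\Box_{a_1, n_k}$ equals $n_k$, so again the inequalities hold with equality against the corresponding entries of $\ub$. Hence $(\umu, \unu)$ together with $\ua$ and $\ub$ is a composable partition, so $\ulambda$ is composable. There is no real obstacle here; the only bookkeeping is to list the entries of $\ub$ in the same order as the rectangles appear in $\unu$.
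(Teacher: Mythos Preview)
Your proof is correct and follows essentially the same approach as the paper: both set $\ua=(a_1,a_2)$ and $\ub=(m_1,\dotsc,m_p,n_1,\dotsc,n_q)$ and then verify the sums and the length/first-part conditions. Your extra Step~1 confirming that $|\ulambda|=ab$ is a harmless addition that the paper omits (since Definition~\ref{D:BCSP} already declares $\ulambda$ to be a Schubert problem).
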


\begin{proof}
  Let  $\ulambda=(\umu,\unu)$ be a block column Schubert problem.
  Let $\ua=(a_1,a_2)$ and $\ub=( m_1,\dotsc,m_p\,,\,n_1,\dotsc,n_q)$.
  Since these are the numbers of rows in the partitions of $\umu$ and columns in the partitions of $\unu$, respectively, we
  need only check their sums.
  Noting that $a_1+a_2=a$ and 
\[
    m_1+\dotsb+m_p \ +\ n_1+\dotsb+n_q\ =\ b_1 + b_2\ =\ b\,,
\]
  completes the proof.
\end{proof}

Let us consider some examples of block column Schubert problems.
One straightforward class is when $a_1=a_2=a$ and $b_1=b_2=b$ and $\ulambda=(\umu,\unu)$ with
$\umu=\unu=(\Box_{a,b},\Box_{a,b})$, which is a Schubert problem on $G(2a,2b)$.
This class includes the Schubert problem $(\I,\I\,,\,\I,\I)$ on $G(2,2)$.
Two more are
\[
  (\T,\T\,,\,\T,\T) \mbox{ on }G(2,4)
  \qquad\mbox{and}\qquad
   \bigl(\sTT,\sTT\,,\,\sTT,\sTT\bigr) \mbox{ on }G(4,4)\,.
 \]

Two others are $(\T,\I\,,\,\I,\I,\I)$ and $(\T,\I\,,\,\T,\I)$ on $G(2,3)$; these are two of the three
composable Schubert problems on $G(2,3)$.
The third, $(\sTI\,,\,\I,\I,\I)$, is geometrically equivalent to $(\I,\I\,,\,\I,\I)$  on $G(2,2)$, as is
$(\T,\I\,,\,\T,\I)$.  
A more interesting example is on $G(5,7)$ where $a_1=2$, $a_2=3$, $b_1=4$, and $b_2=3$, with
$\umu=(\Box_{2,4},\Box_{3,3})$ and $\unu=(\Box_{3,2},\Box_{3,1},\Box_{3,1}\,,\,\Box_{2,2},\Box_{2,1})$, which are the
following partitions
\[
  \raisebox{-8pt}{\includegraphics{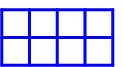}}\ ,\ 
  \raisebox{-12pt}{\includegraphics{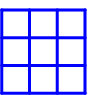}}
  \ \ ,\ \ 
   \raisebox{-12pt}{\includegraphics{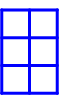}}\ ,\ 
   \raisebox{-12pt}{\includegraphics{pictures/111}}\ ,\ 
   \raisebox{-12pt}{\includegraphics{pictures/111}}
   \ \ ,\  \ 
   \raisebox{-8pt}{\includegraphics{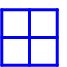}}\ ,\ 
   \raisebox{-8pt}{\includegraphics{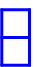}}\ .\ 
\]

Not all composable Schubert problems are block column.
While the first Schubert problem of Example~\ref{Ex:some_compositions},  $\umu=((p),(q))$ and $\unu=(\I,\dotsc,\I)$
($p{+}q$ occurrences of $\I$), is block column,  the remaining two are not.

To understand the structure of these Schubert problems, 
let $\ulambda=(\umu,\unu)$ be a block column Schubert problem as in Definition~\ref{D:BCSP}.
Observe that if we place the two rectangles in $\umu$ in opposite corners of the rectangle $\Box_{a,b}$ as $\Box_{a_1,b_1}$
and $(\Box_{a_2,b_2})^\circ$, then they lie along the main diagonal, meeting at their corners.
There are two remaining rectangles  $\Box_{a_2,b_1}$ and $\Box_{a_1,b_2}$ along the antidiagonal.
As $m_1+\dotsb+m_p=b_1$, the first block $(\Box_{a_2,m_1},\dotsc,\Box_{a_2,m_p})$ of $\unu$ fills the first rectangle, with
each partition spanning all $a_2$ rows, and the second block $(\Box_{a_1,n_1},\dotsc,\Box_{a_1,n_q})$ of $\unu$ similarly
fills the second rectangle.
We show this for the block column Schubert problems given above.
In each, the two partitions in $\umu$ are shaded.
\[
  \raisebox{-8pt}{\includegraphics{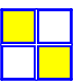}}\qquad
  \raisebox{-8pt}{\includegraphics{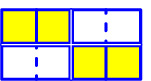}}\qquad
  \raisebox{-18pt}{\includegraphics{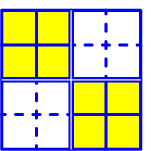}}\qquad
  \raisebox{-8pt}{\includegraphics{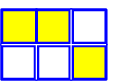}}\qquad
  \raisebox{-8pt}{\includegraphics{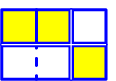}}\qquad
  \raisebox{-23pt}{\includegraphics{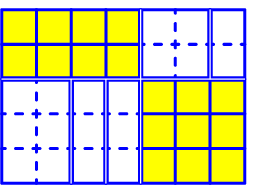}}
\]

\begin{lemma}\label{L:BCSP}
  Let $\ulambda=(\umu,\unu)$ be a block column Schubert problem on $G(a,b)$ with parameters 
  $a_1,a_2,b_1,b_2,m_1,\dotsc,m_p,n_1,\dotsc,n_q$ as in Definition~\ref{D:BCSP}.
  Let $\calFdot$ be $2+p+q$ general flags, which we write as
  $(\Edot^1,\Edot^2,\Fdot^1,\dotsc,\Fdot^p,\Gdot^1,\dotsc,\Gdot^q)$.
  The corresponding instance $\Omega_{\ulambda}\calFdot$ of $\ulambda$ consists of the set
\begin{multline}\label{Eq:BCSP}
   \{ H\in G(a,b)\ \mid\ \dim H\cap E^1_{a_1+b_2}\geq a_1\,,\ \dim H\cap E^2_{a_2+b_1}\geq a_2\,,\\
   \dim H\cap F^i_{b+a_2-m_i}\geq a_2\,,\
  \dim H\cap G^j_{b+a_1-n_j}\geq a_1\ \mbox{ for } i\in[p]\mbox{ and }j\in[q]\, \}\,.  
\end{multline}
\end{lemma}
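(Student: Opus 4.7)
The plan is to directly unpack the Schubert variety definition~\eqref{Eq:SchubertVariety} for each of the rectangular partitions appearing in $\ulambda$ and identify the resulting inequalities with those in~\eqref{Eq:BCSP}. The proof is entirely elementary, amounting to a bookkeeping exercise once a single observation about rectangular Schubert conditions is established.

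The preliminary step I would make is the claim that for any rectangular partition $\Box_{s,t}$ on $G(a,b)$ (with $s\leq a$ and $t\leq b$) and any flag $\Fdot$, the Schubert variety $\Omega_{\Box_{s,t}}\Fdot$ is cut out by the single \emph{corner} condition
\[
  \dim H\cap F_{b+s-t}\ \geq\ s\,.
\]
Indeed, for $\lambda=\Box_{s,t}$ we have $\lambda_i=t$ for $i\leq s$ and $\lambda_i=0$ otherwise. For $i>s$ the requirement $\dim H\cap F_{b+i}\geq i$ is automatic from the general inequality $\dim(H\cap F_{b+i})\geq a+(b+i)-(a+b)=i$. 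For $i<s$, choosing any $s$-dimensional subspace $L\subseteq H\cap F_{b+s-t}$ and noting that $F_{b+s-t-j}$ has codimension $j$ inside $F_{b+s-t}$, the intersection $L\cap F_{b+s-t-j}$ has dimension at least $s-j$, so the condition at $i=s-j$ is automatic.

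With this reduction in hand, I would apply it to each of the six families of rectangular shapes listed in Definition~\ref{D:BCSP}. The two rectangles in $\umu$, paired with the flags $\Edot^1$ and $\Edot^2$, yield corner conditions $\dim H\cap E^1_{b+a_1-b_1}\geq a_1$ and $\dim H\cap E^2_{b+a_2-b_2}\geq a_2$, which simplify, using $b=b_1+b_2$, to
\[
  \dim H\cap E^1_{a_1+b_2}\ \geq\ a_1\qquad\text{and}\qquad \dim H\cap E^2_{a_2+b_1}\ \geq\ a_2\,.
\]
The first block of partitions in $\unu$, paired with $\Fdot^1,\dotsc,\Fdot^p$, gives $\dim H\cap F^i_{b+a_2-m_i}\geq a_2$ for $i\in[p]$, and the second block, paired with $\Gdot^1,\dotsc,\Gdot^q$, gives $\dim H\cap G^j_{b+a_1-n_j}\geq a_1$ for $j\in[q]$. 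Conjoining these conditions is exactly the description~\eqref{Eq:BCSP}.

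The only potential obstacle is the corner-implies-all claim for rectangles, and this is just the codimension count sketched above; an alternative is to cite the standard description of Schubert varieties by essential (corner) conditions from~\cite[\S 9.4]{Fulton}. Beyond that, everything reduces to substitution, and the genericity hypothesis on $\calFdot$ plays no role in the statement itself (only in ensuring transversality and finiteness later on).
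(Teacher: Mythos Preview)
Your proposal is correct and follows essentially the same approach as the paper's proof: both reduce the Schubert conditions for a rectangular partition $\Box_{s,t}$ to the single corner condition $\dim H\cap F_{b+s-t}\geq s$, then apply this to each of the rectangles in $\ulambda$. You simply provide more detail than the paper, explicitly justifying the corner-implies-all reduction via the codimension count and spelling out each of the four families of conditions, whereas the paper states the reduction in one line and declares that the description follows.
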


\begin{proof}
  The partition $\Box_{c,d}$ has $c$ parts, each of size $d$.
  Thus if $\Fdot$ is a flag, then by~\eqref{Eq:SchubertVariety},
\[
    \Omega_{\Box_{c,d}}\Fdot\ =\
  \{ H\in G(a,b)\mid \dim H\cap F_{b+i-d}\geq i\ \mbox{ for }i=1,\dotsc,c\}\,.
\]
 The conditions on $H$ are implied by $\dim H\cap F_{b+c-d}\geq c$.
 The description~\eqref{Eq:BCSP} follows.
\end{proof}  

\begin{theorem}\label{Th:Fibration}
  Let $\ulambda$ be a block column Schubert problem on $G(a,b)$ and $\urho$ a Schubert problem on
  $G(c,d)$.
  Then the composed Schubert problem $\ulambda\circ\urho$ on $G(a{+}c,b{+}d)$ is fibered over $\ulambda$ with fiber
  $\urho$. 
\end{theorem}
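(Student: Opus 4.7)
The plan is to make the fibration explicit by specifying the subspace $V\subset\CC^{a+b+c+d}$ of dimension $a+b$, the induced instance $\calEdot$ of $\ulambda$ on $\Gr(a,V)$, and the $h$-dependent instance $\calFdot(h)$ of $\urho$ on $\Gr(c,W)$, where $W:=\CC^{a+b+c+d}/V$. Write $\calFdot=(\Edot^1,\Edot^2,\Fdot^1,\dots,\Fdot^p,\Gdot^1,\dots,\Gdot^q,\Kdot^1,\dots,\Kdot^t)$ for the flags attached respectively to the partitions $(\Box_{a_i,d+b_i},\alpha^i)$ ($i=1,2$), $\Box_{c+a_2,m_j}+\beta^j$, $\Box_{c+a_1,n_k}+\beta^{p+k}$, and $\gamma^l$ of $\ulambda\circ\urho$, and set
\[
  V \;:=\; E^1_{a_1+b_2}\,+\,E^2_{a_2+b_1}\,,
\]
which has dimension $a+b$ for general $\calFdot$.

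The binding row $i=a_i$ of the rectangular top portion $\Box_{a_i,d+b_i}$ imposes $\dim H\cap E^i_{a_i+b_{3-i}}\geq a_i$ on every $H\in\Omega_{\ulambda\circ\urho}\calFdot$, so $\dim H\cap V\geq a$. As $\dim H\cap V\geq a$ is itself a Schubert condition of codimension $ad$ (from the partition $\Box_{a,d}$) and the total codimension $(a+c)(b+d)$ of $\ulambda\circ\urho$ equals $ad+ab+cb+cd$, transversality (Kleiman's theorem) together with the count $\delta(\ulambda\circ\urho)=\delta(\ulambda)\delta(\urho)$ from Theorem~\ref{Th:product} forces $\dim h=a$, where $h:=H\cap V$. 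The induced $\calEdot$ is defined by restricting $\Edot^1,\Edot^2,\Fdot^j,\Gdot^k$ to $V$, and $h\in\Omega_\ulambda\calEdot$ is checked partition by partition via the rank--nullity identity
\[
  \dim H\cap F\;=\;\dim h\cap F\,+\,\dim\pi(H\cap F)\,,
\]
where $\pi\colon\CC^{a+b+c+d}\to W$ is the quotient map. The $\umu$ conditions are immediate since $E^i_{a_i+b_{3-i}}\subset V$; for the $\unu$ conditions, combining this identity with $\dim\pi(H\cap F)\leq\dim\pi(H)=c$ extracts the binding condition $\dim h\cap F^j_{(b+d)+(c+a_2)-m_j}\geq a_2$ of $\Box_{a_2,m_j}$ on $\Gr(a,V)$ from the binding condition $\dim H\cap F^j_{(b+d)+(c+a_2)-m_j}\geq c+a_2$ of $\Box_{c+a_2,m_j}+\beta^j$ on $H$; the $\Gdot^k$ case is symmetric.

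For the fiber direction, $\calFdot(h)$ is constructed so that the non-binding (top) rows of each composed partition translate, via the dual inequality $\dim\pi(H)\cap\pi(F)\geq\dim\pi(H\cap F)=\dim H\cap F-\dim h\cap F$, to the Schubert conditions $\alpha^i,\beta^j,\gamma^l$ of $\urho$ on $H/h\subset W$. For $\gamma^l$ the flag on $W$ is built from $\Kdot^l$ alone, since $\Kdot^l$ is independent of $V$. For $\alpha^i$ from $\Edot^i$ and $\beta^j$ from $\Fdot^j,\Gdot^k$, naive projection to $W$ produces subspaces whose dimensions are offset from what $\urho$ requires, so the construction must use $h$ to compensate, exploiting the alignment of $h$ with $E^1_{a_1+b_2}$ and $E^2_{a_2+b_1}$ inside $V$. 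Generality of $\calEdot$ and $\calFdot(h)$ for clause~(4) of Definition~\ref{D:fibration} is then automatic because the construction is given by equivariant linear operations on the pair $(\calFdot,h)$.

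Steps~1--3 produce a well-defined map $H\mapsto(h,H/h)$ from $\Omega_{\ulambda\circ\urho}\calFdot$ to pairs $(h,\bar H)$ with $h\in\Omega_\ulambda\calEdot$ and $\bar H\in\Omega_\urho\calFdot(h)$. The bijection required by Definition~\ref{D:fibration} then follows on cardinality grounds: the domain has size $\delta(\ulambda)\delta(\urho)$ by Theorem~\ref{Th:product}, and the codomain has the same size by generic transversality of each smaller Schubert problem. The main obstacle will be Step~3: producing $\calFdot(h)$ whose Schubert conditions on $H/h$ match $\alpha^i,\beta^j,\gamma^l$ exactly rather than a dimension-shifted version. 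The rectangular (block column) structure of $\ulambda$ is what makes this possible, since it guarantees that each composed partition has a single binding row encoding the $\ulambda$ condition on $h$ and a clean residual block encoding the $\urho$ condition on $H/h$, so that $V$ and a suitable complement can carry the $\ulambda$ and $\urho$ data coherently.
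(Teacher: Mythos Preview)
Your overall plan matches the paper's: the same subspace $V=E^1_{a_1+b_2}+E^2_{a_2+b_1}$, the same induced instance $\calEdot$ by restriction to $V$, and the same cardinality argument via Theorem~\ref{Th:product} to upgrade the map $H\mapsto(h,H/h)$ to a bijection. Your verification that $h\in\Omega_{\ulambda}\calEdot$ via rank--nullity is a fine substitute for the paper's direct dimension count.

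The genuine gap is exactly where you flag it: you do not construct $\calFdot(h)$. You correctly observe that naive projection of the flags to $W=\CC^{a+b+c+d}/V$ fails, but you stop at ``the construction must use $h$ to compensate.'' The paper's missing idea is this: rather than working in $W$ directly, one first builds an $h$-dependent subspace
\[
  W(h)\ :=\ \Bigl(\bigcap_{j=1}^p\bigl(h+K^j_{b+d+c+a_2-m_j}\bigr)\Bigr)\cap
            \Bigl(\bigcap_{k=1}^q\bigl(h+K^{p+k}_{b+d+c+a_1-n_k}\bigr)\Bigr)
\]
of codimension $b$ in $\CC^{a+b+c+d}$, checks that $W(h)\cap V=h$ so that $W(h)/h\xrightarrow{\sim}W$, and then defines each flag $\Fdot(h)$ as the image of $(h+\Fdot)\cap W(h)$ in $W(h)/h$. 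Passing through $W(h)$ is precisely what cuts the dimensions down by $b$, so that the relevant subspaces land in dimension $d+i-\alpha^i_{\,\cdot}$, $d+i-\beta^j_{\,\cdot}$, $d+i-\gamma^l_{\,\cdot}$ as required. In particular your assertion that ``for $\gamma^l$ the flag on $W$ is built from $\Kdot^l$ alone'' is not correct: the image $\pi(K^l_{b+d+i-\gamma^l_i})$ in $W$ is all of $W$ once $b+d+i-\gamma^l_i\geq c+d$, so even the $\ugamma$ flags must be routed through $W(h)$ and hence depend on $h$. Without this intermediate space your Step~2 cannot be completed, and the fibration property remains unproven.
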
  

\begin{corollary}\label{C:Fibration}
 Suppose that $\ulambda$ is a nontrivial block column Schubert problem on $G(a,b)$ and $\urho$ is a nontrivial Schubert
 problem on $G(c,d)$.
 Then the Galois group $\Gal_{\ulambda\circ\urho}$ is imprimitive.  
\end{corollary}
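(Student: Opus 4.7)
The plan is straightforward: this corollary should be obtained as a direct combination of Theorem~\ref{Th:Fibration} with Lemma~\ref{L:Fibered}, since the former produces the fibration structure and the latter converts that structure into imprimitivity of the Galois group. No new geometric or combinatorial input is needed beyond verifying that the hypotheses of Lemma~\ref{L:Fibered} are met.

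First I would invoke Theorem~\ref{Th:Fibration} with the given block column Schubert problem $\ulambda$ on $G(a,b)$ and the arbitrary Schubert problem $\urho$ on $G(c,d)$. Its conclusion is exactly that the composed Schubert problem $\ulambda\circ\urho$ on $G(a{+}c,b{+}d)$ is fibered over $\ulambda$ with fiber $\urho$ in the sense of Definition~\ref{D:fibration}. Thus setting $\ukappa:=\ulambda\circ\urho$, all four conditions (1)--(4) required in the notion of fibration are in place.

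Next I would apply Lemma~\ref{L:Fibered} to this fibration. The lemma's hypothesis that $\ukappa$ is fibered over $\ulambda$ with fiber $\urho$ is now satisfied, and the assumption that both $\ulambda$ and $\urho$ are nontrivial is given by the hypotheses of the corollary. The lemma then yields directly that $\Gal_{\ukappa}=\Gal_{\ulambda\circ\urho}$ embeds into the wreath product $\Gal_{\ulambda}\wr\Gal_{\urho}$ and is imprimitive.

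Since the deductive content is this short, there is no real obstacle within the corollary itself: the substantive work lies entirely in Theorem~\ref{Th:Fibration}, whose proof must produce the subspace $V\subset\CC^{a+c+b+d}$ of dimension $a{+}b$, the induced instance $\calEdot$ of $\ulambda$ in $\Gr(a,V)$, and the varying fiber instances $\calFdot(h)$ of $\urho$ in $\Gr(c,\CC^{a+c+b+d}/V)$, together with the bijection $H\mapsto(H\cap V,\,H/(H\cap V))$ on solution sets. One thing worth double-checking while writing the proof of the corollary is that ``nontrivial'' in the hypotheses matches the usage in Definition~\ref{D:fibration} and Lemma~\ref{L:Fibered} (i.e.\ $\delta(\ulambda),\delta(\urho)>1$), so that the two blocks in the wreath-product action on $[\delta(\ulambda)]\times[\delta(\urho)]$ are genuine, forcing primitivity to fail.
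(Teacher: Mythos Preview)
Your proposal is correct and matches the paper's own argument: the paper does not give a separate proof of Corollary~\ref{C:Fibration}, but simply notes at the end of the proof of Theorem~\ref{Th:Fibration} that the corollary follows, the implicit reasoning being exactly the application of Lemma~\ref{L:Fibered} to the fibration produced by Theorem~\ref{Th:Fibration}. Your check that ``nontrivial'' means $\delta(\ulambda),\delta(\urho)>1$ so that Lemma~\ref{L:Fibered} applies is the only point needing verification, and it is consistent with the paper's usage.
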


\begin{remark}\label{R:BRSP}
  Conjugating every partition in a block column Schubert problem gives a \demph{block row Schubert problem}.
  A block row Schubert problem is composable, and the conclusions of Theorem~\ref{Th:Fibration} and
  Corollary~\ref{C:Fibration} hold for block row Schubert problems.
  The reason for this is that the isomorphism between $G(a,b)$ and $G(b,a)$ induced by duality corresponds to
  conjugation $\lambda\mapsto \lambda'$ of Young diagrams.
\hfill$\diamond$
\end{remark}

\subsection{Proof of Theorem~\ref{Th:Fibration}}
Let $\ulambda=(\umu,\unu)$ be a block column Schubert problem on $G(a,b)$ with the parameters
$a_1,a_2,b_1,b_2,m_1,\dotsc,m_p,n_1,\dotsc,n_q$ as in Definition~\ref{D:BCSP}.
Let $\urho=(\ualpha,\ubeta,\ugamma)$ be a Schubert problem on $G(c,d)$ with parameters $r=2$, $s=p{+}q$, and $t$ that we may
compose with $\ulambda$.
Then the composition $\ulambda\circ\urho$ consists of the partitions
\begin{multline*}
 \qquad \bigl( \Box_{a_1,(d+b_1)}, \alpha^1\bigr)\,,\, 
  \bigl( \Box_{a_2,(d+b_2)}, \alpha^2\bigr)\,,\
  \Box_{(c+a_2), m_1}{+}\beta^1\,,\,\dotsc\,,\, \Box_{(c+a_2), m_p}{+}\beta^p\;,\;\\
  \Box_{(c+a_1), n_1}{+}\beta^{p+1}\,,\,\dotsc\,,\, \Box_{(c+a_1), n_q}{+}\beta^{p+q}\ ,\
  \gamma^1,\dotsc,\gamma^t\,.\qquad
\end{multline*}
Here is a schematic illustrating the partitions in the first three groups
\begin{equation}\label{eq:ComposedSchematic}
  \raisebox{-55pt}{
  \begin{picture}(100,65)
      \put(13,15){\includegraphics{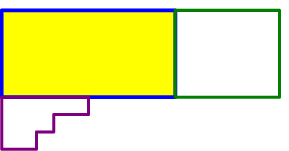}}
      \put(35,59){$d$}   \put(75,59){$b_i$}
      \put(0,42){$a_i$}
      \put(29,12){$\alpha^i$}
      \put(53,6){$i=1,2$}
    \end{picture}
  \qquad
  \begin{picture}(110,80)(1,0)
      \put(15,0){\includegraphics{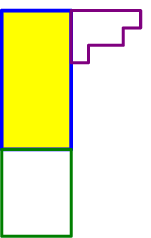}}
      \put(4,43){$c$}   \put(19,71){$m_j$}
      \put(1,10){$a_2$}
      \put(45,45){$\beta^j$}
      \put(45,2){$j=1,\dotsc,p$}
    \end{picture}
  \qquad
  \begin{picture}(100,80)(1,0)
      \put(15,0){\includegraphics{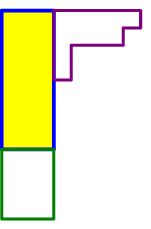}}
      \put(4,43){$c$}   \put(17,70){$n_k$}
      \put(1,12.5){$a_1$}
      \put(42,43){$\beta^{p+k}$}
      \put(40,6){$k=1,\dotsc,q$}
    \end{picture}}
 \end{equation}
 
Let $\calFdot=(\Fdot^1,\Fdot^2,\Kdot^1,\dotsc,\Kdot^p,\Kdot^{p+1},\dotsc,\Kdot^{p+q},\Ldot^1,\dotsc,\Ldot^t)$ be 
general flags in $\CC^{a+b+c+d}$.
We will use that they are general without comment in making assertions about the dimensions of intersections and spans.
Generality also implies that the dimension inequalities in the definition of Schubert variety~\eqref{Eq:SchubertVariety}
all hold with equality, which we also use.

By Definition~\ref{D:fibration}, to show that $\ulambda\circ\urho$ is fibered over $\ulambda$, we must do the following.

\begin{enumerate}
\item Construct a linear subspace $V\simeq\CC^{a+b}$ and an instance $\calEdot$ in $V$ of $\ulambda$ such that
       if $H\in\Omega_{\ulambda\circ\urho}\calFdot$, then $h:=H\cap V$ is an element of $\Omega_{\ulambda}\calEdot$.

\item  Let $W:=\CC^{a+c+b+d}/V$.
   Then,  for any $h\in \Omega_{\ulambda}\calEdot$, construct an instance
   $\calFdot(h)$ of $\urho$ in $\Gr(c,W)$ such that if $\defcolor{h}:=H\cap V$, then
   $H/h\in\Omega_{\urho}\calFdot(h)$.

 \item  Prove that the map $H\mapsto (h,H/h)$, where $h:=H\cap V$, is a bijection between 
   $\Omega_{\ukappa}\calFdot$ and $\Omega_{\ulambda}\calEdot\times \Omega_{\urho}\calFdot(h)$ (actually this is a
   fiber bundle over $\Omega_{\ulambda}$.)

 \item Show that the instances $\calEdot$ and $\calFdot(h)$ are sufficiently general.

\end{enumerate}

We prove Steps (1)--(4) in separate headings below.

\begin{remark}
 A (partial) flag $\Fdot$ in $\CC^n$ is a nested sequence of subspaces, where not all dimensions need occur.
 For any subspace $h\subset\CC^n$, the sequence of subspaces $h+F_i$ for $F_i\in\Fdot$ forms another flag
 \defcolor{$h{+}\Fdot$} in $\CC^n$ with smallest subspace $h$.
 If $V\subset\CC^n$ is a subspace, then the subspaces $V\cap F_i$ form a flag \defcolor{$V\cap\Fdot$} in $V$.
 If $\CC^n\twoheadrightarrow V$ is surjective, then the image of $\Fdot$ is a flag in $V$.\hfill{$\diamond$}
%
%
\end{remark}

\noindent{\bf Step 1.}
Set $\defcolor{V}:= F^1_{b_2+a_1}+ F^2_{b_1+a_2}$.
As the flags are general, this is a direct sum and $V$ has dimension $a_1{+}a_2{+}b_1{+}b_2=a{+}b$.
Define \defcolor{$\calEdot$} by $\Edot^i := \Fdot^i\cap V$ for $i=1,2$ and 
$\Edot^{2+j} := \Kdot^j\cap V$ for $j=1,\dotsc,p{+}q$.
Fixing $F^1_{b_2+a_1}$ and $F^2_{b_1+a_2}$ and thus $V$, every possible collection of flags $\calEdot$ in $V$ can occur,
which proves part of Assertion (4) from Definition~\ref{D:fibration}. 

Let $H\in\Omega_{\ulambda\circ\urho}\calFdot$.
Since $H\in\Omega_{\bigl( \Box_{a_i,(d+b_i)}, \alpha^1\bigr)}\Fdot^i$, for $i=1,2$, it satisfies
\[
    \dim H\cap F^1_{d+b-(d+b_1)+a_1}\ =\ a_1\ \mbox{ and }\ 
    \dim H\cap F^2_{d+b-(d+b_2)+a_2}\ =\ a_2\,.
\]
The subspaces here are just $F_{b_2+a_1}$ and $F_{b_1+a_2}$.
Thus the dimension of $\defcolor{h}:=H\cap V$ is $a=a_1{+}a_2$.
This implies that $h\in\Gr(a,V)$, and that $h\in\Omega_{\Box_{a_i,b_i}}\Edot^i$ for $i=1,2$.

To show that $h\in\Omega_{\ulambda}\calEdot$, first let $1\leq j\leq p$.
Since $H\in\Omega_{ \Box_{(c+a_2), m_j}{+}\beta^j}\Kdot^j$, we have that 
\[
  \dim H\cap K^j_{b+d-m_j+c+a_2}\ =\ c+a_2\,.
\]
Since $V$ has codimension $c{+}d$, for any $i$ we have $\dim V\cap  K^j_{c+d+i}=i$.
Thus $E^j_{b-m_j+a_2}=V\cap K^j_{b+d-m_j+c+a_2}$.
Since $h$ has codimension $c$ in $H$, $\dim h\cap K^j_{b+d-m_j+c+a_2}\ =\ a_2$.
Putting these dimension  calculations together shows that 
\[
  \dim h\cap E^j_{b-m_j+a_2}\ =\ a_2\,,
\]
and thus $h\in\Omega_{\Box_{a_2,m_j}}\Edot^j$.
Similar arguments for $k=1,\dotsc,q$ show that $h\in\Omega_{\Box_{a_1,n_k}}\Edot^{p+k}$, and thus
$h\in\Omega_{\ulambda}\calEdot$.
This completes the proof of Assertion (1) in Definition~\ref{D:fibration}.\medskip

\noindent{\bf Step 2.}
Let $h\in\Omega_{\ulambda}\calEdot$.
Since $\dim h\cap K^j_{b+d+c+a_2-m_j}=a_2$ and $\dim h=a$, we have that
\[
    \dim \bigl(h +  K^j_{b+d+c+a_2-m_j}\bigr)\ =\ a+b+c+d-m_j\,,
\]
That is, it has codimension $m_j$.
Similarly $h{+}K^{p+k}_{b+d+c+a_1-n_k}$ has codimension $n_k$.
Since the sum of the $m_j$ and of the $n_k$ is $b$, the intersection \defcolor{$W(h)$} of these spaces,
 \begin{equation}\label{Eq:W(h)}
     \Bigl(\bigcap_{j=1}^p \bigl(h +  K^j_{b+d+c+a_2-m_j}\bigr)\Bigr)\ \cap\ 
     \Bigl(\bigcap_{k=1}^q \bigl(h +  K^{p+k}_{b+d+c+a_1-n_k}\bigr)\Bigr)\ ,
 \end{equation}
has codimension $b$.   
Note that $W(h)$ contains $h$ and $W(h)/h\simeq W=\CC^{a+b+c+d}/V$.
For any flag $\Fdot$ in $\CC^{a+b+c+d}$, let $\Fdot(h)$ be the image of the flag $h+\Fdot$ in $W$.
We claim that $H\in\Omega_{\ulambda\circ\urho}\calFdot$, and $h=H\cap V$, then
$H/h\in \Omega_{\urho}\calFdot(h)$ in $\Gr(c,W(h)/h)$.

We show this for the first two conditions $\ualpha=(\alpha^1,\alpha^2)$ in $\urho$.
The first two conditions $(\Box_{a_i,d+b_i},\alpha^i)$ of $\ulambda\circ\urho$ are depicted on the left
of~\eqref{eq:ComposedSchematic}.
As $H\in\Omega_{(\Box_{a_1,d+b_1},\alpha^1)}\Fdot^1$, we have, 
\[
  \dim H\cap F^1_{b_2+a_1}\ =\ a_1
  \quad\mbox{ and }\quad
  \dim H\cap F^1_{d+b+a_1+j-\alpha^1_j}\ =\ a_1+j\ \mbox{ for } j=1,\dotsc,c\,.
\]
(The first is from Step 1.)
Since $H\cap F^1_{b_2+a_1}= h\cap F^1_{b_2+a_1}$, we have 
\[
  \dim H\cap (h+F^1_{d+b+a_1+j-\alpha^1_j})\ =\ a+j\ \mbox{ for } j=1,\dotsc,c\,.
\]
Then $\dim(h+F^1_{d+b+a_1+j-\alpha^1_j})=d+b+a+j-\alpha^1_j$, so that its image in $W(h)/h$ has dimension
$d+j-\alpha^1_j$, and thus
\[
    \dim \bigl(H/h \cap  F^1(h)_{d+j-\alpha^1_j}\bigr)\ =\ j\ \mbox{ for } j=1,\dotsc,c\,,
\]
which shows that $H/h\in \Omega_{\alpha^1}\Fdot^1(h)$.
The same arguments show that  $H/h\in \Omega_{\alpha^2}\Fdot^2(h)$.

We now consider the next $p+q$ conditions $\ubeta$ in $\urho$.
Suppose that $1\leq j\leq p$.
As $H\in\Omega_{\Box_{c+a_2,m_j}+\beta^j}\Kdot^j$, for each $i=1,\dotsc,c$, we have
\[
  \dim H\cap K^j_{b+d+i-m_j-\beta^j_i}\ =\ i\,.
\]
Since $V$ is in general position with respect to $\Kdot^j$, $h\cap K^j_{b+d+i-m_j-\beta^j_i}=\{0\}$, so that
\[
  \dim \bigl(h+   K^j_{b+d+i-m_j-\beta^j_i}\bigr)\ =\  a+b+d+i-m_j-\beta^j_i\,.
\]
Since this is a subspace of $h+ K^j_{b+d+c+a_2-m_j}$, which has codimension $m_j$ and is one of the subspaces in the
intersection~\eqref{Eq:W(h)} defining $W(h)$, we see that
\[
  \dim \Bigl(W(h)\;\cap\; \bigl(  h+   K^j_{b+d+i-m_j-\beta^j_i} \bigr)\Bigr) \ =\ a+d+i-\beta^j_i\,.
\]
Thus the image of $ h+   K^j_{b+d+i-m_j-\beta^j_i}$ in $W(h)/h$ has dimension $d+i-\beta^j_i$, so that it is the
subspace $K^j(h)_{d+i-\beta^j_i}$.
We conclude that for   $i=1,\dotsc,c$,
\[
  \dim \bigl(H/h\cap  K^j(h)_{d+i-\beta^j_i}\bigr)\ =\ i\,.
\]
Thus for $j=1,\dotsc,p$, we have $H/h\in\Omega_{\beta^j}\Kdot^j(h)$.
Similar arguments prove that $H/h\in\Omega_{\beta^{j}}\Kdot^{j}(h)$ for $j=p+1,\dotsc,p+q$.

To complete this step, we show that $H/h\in\Omega_{\gamma^j}\Ldot^{j}(h)$ for each $j=1,\dotsc,t$.
Let $1\leq j\leq t$.
Then $H\in\Omega_{\gamma^j}\Ldot^j$ and for each  $i=1,\dotsc,c$, we have
$\dim H\cap L^j_{b+d+i-\gamma^j_i}=i$.
Again by the general position of $V$, we have that
$\dim (h+ L^j_{b+d+i-\gamma^j_i})=a+b+d+i-\gamma^j_i$.
Then the intersection of this subspace with $W(h)$ has dimension $a+d+i-\gamma^j_i$, so that its
image in $W(h)/h$ has dimension $d+i-\gamma^j_i$, and is the subspace $L^j(h)_{d+i-\gamma^j_i}$ in the flag $\Ldot^j(h)$.
Almost as before, this implies that 
\[
  \dim \bigl(H/h\cap  L^j(h)_{d+i-\gamma^j_i}\bigr)\ =\ i\ \ \mbox{ for } i=1,\dotsc,\, c.
\]
Thus we conclude that for $j=1,\dotsc,t$, we have that $H/h\in\Omega_{\gamma^j}\Ldot^j(h)$.
To complete the proof of Assertion (2) of  Definition~\ref{D:fibration}, we only need to identify
$W(h)/h$ with $W$.\medskip

\noindent{\bf Step 3.}
The constructions in Steps 1 and 2 give us, for every $H\in\Omega_{\ulambda\circ\urho}\calFdot$ a pair of subspaces
$h:=H\cap V$ and $H/h\in W(h)/h$, as well as instances $\calEdot$ of $\ulambda$ in $\Gr(a,V)$ and
$\calFdot(h)$ of $\urho$ in $\Gr(c,W(h)/h)$ such that $h\in\Omega_{\ulambda}\calEdot$ and
$H/h\in \Omega_{\urho}\calFdot(h)$.

The mapping $H\mapsto (h,H/h)$ is clearly injective.
It is surjective, as by Theorem~\ref{Th:product}, there are
$\delta(\ulambda\circ\urho)=\delta(\ulambda)\cdot\delta(\urho)$ elements $H$ in $\Omega_{\ulambda\circ\urho}\calFdot$,
which is the number of pairs $(h,H/h)$ such that  $h\in\Omega_{\ulambda}\calEdot$ and
$H/h\in \Omega_{\urho}\calFdot(h)$.
This completes Assertion (3) of  Definition~\ref{D:fibration}.\medskip

\noindent{\bf Step 4.}
We already observed that $\calEdot$ is sufficiently general.
For the same assertions regarding $\calFdot(h)$, we observe that the flags $\Ldot^j$ are in general position with respect
to $V$, as are the subspaces $K^j_r$ for $r\leq b+d+c-m_j$, and finally
the subspaces $F^1_{b_2+a_1+r}/F^1_{b_2+a_1}$ for $r\geq 0$ are general in $W$, and the same for
$F^2_{b_1+a_2+r}/F^2_{b_1+a_2}$.
This completes the proof of Theorem~\ref{Th:Fibration}, as well as Corollary~\ref{C:Fibration}.\hfill$\Box$

\subsection{Computational evidence for Conjecture~\ref{C:imprimitive}}\label{S:computation}
We present evidence that nontrivial compositions $\ulambda\circ\urho$ of Schubert problems have imprimitive Galois groups,
even when $\ulambda$ is not a block column Schubert problem.

Let $\ulambda=(\umu,\unu)$ with  $\umu=(\T,\T,\I)$, $\nu=(\II,\I,\I)$, and $\ua=\ub=(1,1,1)$ be the composable partition
of Example~\ref{Ex:some_compositions}.
Let $\urho$ be the Schubert problem $(\I,\I,\I,\I)$ on $G(2,2)$, expanded with some empty
partitions, so that $\ualpha=\ubeta=(\I,\emptyset,\emptyset)$ and $\ugamma=(\I,\I)$.
The composition $\ulambda\circ\urho$ consists of the eight partitions
 \[
  \raisebox{-6pt}{\,\includegraphics{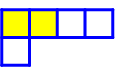}\,}\,,\,   
  \raisebox{-2pt}{\,\includegraphics{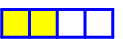}\,}\,,\,   
  \raisebox{-2pt}{\,\includegraphics{pictures/3d}\,}\;\ ,\  \;  
  \raisebox{-14pt}{\,\includegraphics{pictures/2111c}\,}\,,\,   
  \raisebox{-10pt}{\,\includegraphics{pictures/111c}\,}\,,\,   
  \raisebox{-10pt}{\,\includegraphics{pictures/111c}\,}\; \ ,\; \    
  \raisebox{-2pt}{\Ib}\, ,\,
  \raisebox{-2pt}{\Ib}   \ .
\]
Algorithms in Schubert calculus imply that $\delta(\ulambda)=5$ and $\delta(\urho)=2$, and we know from previous
computations that both are full symmetric, $\Gal_{\ulambda}=S_5$ and $\Gal_{\urho}=S_2$.

As explained in~\cite[\S~2.2]{GIVIX}, we may use symbolic computation to study the Galois group of a Schubert problem
$\ukappa$ of moderate size by computing cycle types of Frobenius lifts.
This does not quite study $\Gal_{\ukappa}$, but rather  $\Gal_{\ukappa}(\QQ)$, the Galois group of the
branched cover $\Omega_{\ukappa}\to(\Fl(a+b))^r$, where we have restricted scalars to $\QQ$, instead of $\CC$.

We computed 1 million Frobenius lifts in $\Gal_{\ulambda\circ\urho}(\QQ)$, finding 24 cycle types.
This computation supports the conjecture that $\Gal_{\ulambda\circ\urho}(\QQ)$ is the expected wreath product
$(S_2)^5\rtimes S_5$, which is the hyperoctahedral group $B_5$.
This group has $2^5\cdot 5!=3840$ elements.
Table~\ref{Ta:test} shows the frequency of cycle types found.
Each row is labeled by the cycle type, with the second column recording the frequency.
\begin{table}[htb]
\caption{Frequency of cycle types of $\ulambda\circ\urho$.}
\label{Ta:test}
 \[
  \begin{tabular}{|c|r|r|r|||c|r|r|r|} \hline 
  \multicolumn{8}{|c|}{Cycles found in 1000000 instances of the \raisebox{-6pt}{\rule{0pt}{10pt}}
             Schubert problem $\ulambda\circ\urho$}\\\hline 
  Type & Freq. & Fraction &actual &  Type & Freq. & Fraction &actual\\
  \hline\hline
  $(10)$ &  99816 &    383.293 & 384
         & $(1^2, 2^2, 4)$ &  46399 &    178.172 & 180   \rule{0pt}{13pt}\\\hline
  $(2, 8)$ &  62583 &    240.319 & 240
         & $(1^4, 2, 4)$ &  15367 &     59.009 &  60     \rule{0pt}{13pt}\\\hline
  $(1^2, 8)$ &  62347 &    239.412 & 240
         &  $(1^6, 4)$ &   5157 &     19.803 &  20       \rule{0pt}{13pt}\\\hline
  $(4, 6)$ &  41883 &    160.831 & 160
         & $(2^2, 3^2)$ &  62596 &    240.369 & 240      \rule{0pt}{13pt}\\\hline
  $(2^2, 6)$ &  62690 &    240.730 & 240
         &$(1^2, 2, 3^2)$ &  41447 &    159.156 & 160    \rule{0pt}{13pt}\\\hline
  $(1^2, 2, 6)$ &  41362 &    158.830 & 160
         & $(1^4, 3^2)$ &  20932 &     80.379 &  80      \rule{0pt}{13pt}\\\hline
  $(1^4, 6)$ &  20899 &     80.252 &  80
         & $(2^5)$ &  20960 &     80.486 &  81          \rule{0pt}{13pt}\\\hline
  $(5^2)$ & 100844 &    387.241 & 384
         & $(1^2, 2^4)$ &  32217 &    123.713 & 125     \rule{0pt}{13pt}\\\hline
  $(2, 4^2)$ &  78690 &    302.170 & 300
         & $(1^4, 2^3)$ &  17806 &     68.375 &  70      \rule{0pt}{13pt}\\\hline
  $(1^2, 4^2)$ &  78429 &    301.167 & 300
         &$(1^6, 2^2)$ &   7706 &     29.591 &  30       \rule{0pt}{13pt}\\\hline
  $(3^2, 4)$ &  42056 &    161.495 & 160
         &  $(1^8, 2)$ &   1261 &      4.842 &   5       \rule{0pt}{13pt}\\\hline
  $(2^3, 4)$ &  36331 &    139.511 & 140
         & $(1^{10})$ &    222 &      0.852 &   1     \rule{0pt}{13pt}\\\hline
\end{tabular}
\]
\end{table}
The empirical fraction of times the identity was obtained, $10^6/222\simeq 4504.5$, 
suggests that the order of the group $|\Gal_{\ulambda\circ\urho}(\QQ)|$ is a divisor of $10!$ near this number.
Taking into account the empirical evidence from the computations in~\cite{GIVIX} that the identity is under sampled, we
suppose that it is one of 
\[
   3600\,,\  3780\,,\  3840\,,\  4032\,,\  4050\,,\  4200\,,\  4320\,,\  4480\,.
\]
Assuming that $|\Gal_{\ulambda\circ\urho}(\QQ)|=3840$, the third column gives the (normalized to 3840) fraction of times
that cycle type was observed.
The last column gives the number of elements in $B_5$ with the given cycle type in $S_{10}$.

\providecommand{\bysame}{\leavevmode\hbox to3em{\hrulefill}\thinspace}
\providecommand{\MR}{\relax\ifhmode\unskip\space\fi MR }
\providecommand{\MRhref}[2]{%
  \href{http://www.ams.org/mathscinet-getitem?mr=#1}{#2}
}
\providecommand{\href}[2]{#2}


\begin{thebibliography}{10}

\bibitem{AR}
C.~Am\'endola and J.~I. Rodriguez, \emph{Solving parameterized polynomial
  systems with decomposable projections}, 2016, {\tt arXiv:1612.08807}.

\bibitem{BSS}
G.~Benkart, F.~Sottile, and J.~Stroomer, \emph{Tableau switching: Algorithms
  and applications}, J. Comb. Theory Ser. A \textbf{76} (1996), 11--43.

\bibitem{BdCS}
C.~J. Brooks, A.~Mart\'\i n~del Campo, and F.~Sottile, \emph{Galois groups of
  {S}chubert problems of lines are at least alternating}, Trans. Amer. Math.
  Soc. \textbf{367} (2015), 4183--4206.

\bibitem{SDSS}
T.~Brysiewicz, J.~I. Rodriguez, F.~Sottile, and T.~Yahl, \emph{Solving
  decomposable sparse systems}, 2020, {\tt arXiv:2001.04228}.

\bibitem{DW}
Harm Derksen and Jerzy Weyman, \emph{The combinatorics of quiver
  representations}, Ann. Inst. Fourier (Grenoble) \textbf{61} (2011), no.~3,
  1061--1131.

\bibitem{Fulton}
William Fulton, \emph{Young tableaux}, London Mathematical Society Student
  Texts, vol.~35, Cambridge University Press, Cambridge, 1997.

\bibitem{Ha79}
J.~Harris, \emph{Galois groups of enumerative problems}, Duke Math.~J.
  \textbf{46} (1979), 685--724.

\bibitem{J1870}
C.~Jordan, \emph{Trait\'e des substitutions et des \'equations alg\'ebrique},
  Gauthier-Villars, Paris, 1870.

\bibitem{Kleiman}
S.~L. Kleiman, \emph{The transversality of a general translate}, Compositio
  Math. \textbf{28} (1974), 287--297.

\bibitem{LS09}
A.~Leykin and F.~Sottile, \emph{Galois groups of {S}chubert problems via
  homotopy computation}, Math. Comp. \textbf{78} (2009), no.~267, 1749--1765.

\bibitem{Macdonald}
I.~G. Macdonald, \emph{Symmetric functions and {H}all polynomials}, second ed.,
  Oxford Classic Texts in the Physical Sciences, The Clarendon Press, Oxford
  University Press, New York, 2015.

\bibitem{MSJ}
A.~Mart\'\i n~del Campo and F.~Sottile, \emph{Experimentation in the {S}chubert
  calculus}, Schubert Calculus, Osaka 2012 (H.~Naruse, T.~Ikeda, M.~Masuda, and
  T.~Tanisaki, eds.), Advanced Studies in Pure Mathematics, vol.~71,
  Mathematical Society of Japan, 2016, pp.~295--336.

\bibitem{GIVIX}
A.~Mart\'\i n~del Campo, F.~Sottile, and R.~Williams, \emph{Classification of
  {S}chubert {G}alois groups in {G}r(4,9)}, {\tt arXiv.org/1902.06809}, 2019.

\bibitem{PS}
Gian~Pietro Pirola and Enrico Schlesinger, \emph{Monodromy of projective
  curves}, J. Algebraic Geom. \textbf{14} (2005), no.~4, 623--642.

\bibitem{Sagan}
Bruce~E. Sagan, \emph{The symmetric group}, second ed., Graduate Texts in
  Mathematics, vol. 203, Springer-Verlag, New York, 2001.

\bibitem{SW_double}
F.~Sottile and J.~White, \emph{Double transitivity of {G}alois groups in
  {S}chubert calculus of {G}rassmannians}, Algebr. Geom. \textbf{2} (2015),
  no.~4, 422--445.

\bibitem{Stanley}
Richard~P. Stanley, \emph{Enumerative combinatorics. {V}ol. 2}, Cambridge
  Studies in Advanced Mathematics, vol.~62, Cambridge University Press,
  Cambridge, 1999.

\bibitem{Va06a}
R.~Vakil, \emph{A geometric {L}ittlewood-{R}ichardson rule}, Ann. of Math. (2)
  \textbf{164} (2006), no.~2, 371--421, Appendix A written with A. Knutson.

\bibitem{Va06b}
\bysame, \emph{Schubert induction}, Ann. of Math. (2) \textbf{164} (2006),
  no.~2, 489--512.

\bibitem{Wielandt}
Helmut Wielandt, \emph{Finite permutation groups}, Translated from the German
  by R. Bercov, Academic Press, New York-London, 1964.

\end{thebibliography}
\end{document}